\documentclass{article}
\usepackage{fontspec}
\usepackage{amsmath,amssymb,amsthm,bbm,float,graphicx,geometry,mathtools,parskip,setspace,subcaption}
\usepackage{mathrsfs}
\usepackage{enumerate}
\usepackage{nicefrac}
\usepackage{todonotes}
\usepackage{comment}
\usepackage[colorlinks, linkcolor = red!80!black, citecolor = blue!80!black, breaklinks, pdfauthor={Lukas Luechtrath, Christian Moench}]{hyperref}
\usepackage{orcidlink}
\usepackage{titling}
\usepackage{tikz}
\usetikzlibrary{backgrounds}
\usetikzlibrary{patterns}
\usetikzlibrary{positioning, shapes.geometric}

\usepackage{caption}
\usepackage{graphicx}
\allowdisplaybreaks%breaks long equations onto multiple pages is needed
\newcommand{\N}{\mathbb{N}}

\newtheorem{theorem}{Theorem}[section]
\newtheorem{lemma}[theorem]{Lemma}

\newtheorem{prop}[theorem]{Proposition}

\theoremstyle{definition}

\newtheorem{remark}[theorem]{Remark}

\renewcommand{\P}{\mathbb{P}}
\newcommand{\x}{\boldsymbol{x}}
\newcommand{\y}{\boldsymbol{y}}
\newcommand{\1}{\mathbbm{1}}
\newcommand{\G}{\mathscr{G}}
\newcommand{\C}{\mathscr{C}}
\newcommand{\X}{\boldsymbol{X}}
\newcommand{\E}{\mathbb{E}}
\newcommand{\cE}{\mathcal{E}}
\newcommand{\0}{\boldsymbol{o}}
\newcommand{\R}{\mathbb{R}}
\renewcommand{\d}{\mathrm{d}}
\newcommand{\z}{\boldsymbol{z}}
\newcommand{\w}{\boldsymbol{w}}
\renewcommand{\v}{\boldsymbol{v}}
\renewcommand{\u}{\boldsymbol{u}}
\newcommand{\Z}{\mathbb{Z}}
\newcommand{\Y}{\boldsymbol{Y}}

\newcommand{\cX}{\mathcal{X}}
\newcommand{\cG}{\mathcal{G}}
\newcommand{\D}{\mathscr{D}}
\newcommand{\cD}{\mathcal{D}}
\newcommand{\bbT}{\mathbb{T}}
\newcommand{\scrV}{\mathscr{V}}
\newcommand{\scrE}{\mathscr{E}}
\newcommand{\scrX}{\mathscr{X}}
\renewcommand{\o}{\0}
\newcommand{\scrN}{\mathscr{N}}

\renewcommand{\d}{\,\mathrm{d}}
\newcommand{\scrI}{\mathscr{I}}
\newcommand{\cN}{\bfN}

\newcommand{\Var}{\textup{Var}}
\newcommand{\bfN}{\mathbf{N}}
\newcommand{\bfM}{\mathbf{M}}
\newcommand{\cZ}{\mathcal{Z}}

\pretitle{\centering\LARGE\scshape}
 \posttitle{\vskip 0.75cm}

 \predate{\vskip 0.5 cm \centering\large}
 \postdate{\par}

\usepackage[style = numeric, sorting=nyt, url = false, abbreviate=false, maxbibnames=9, sortcites=true, doi = true, backend = biber, giveninits = true, isbn = false]{biblatex}

\renewbibmacro{in:}{\ifentrytype{article}{}{\printtext{\bibstring{in}\intitlepunct}}}

\DeclareFieldFormat{journaltitle}{\mkbibemph{#1\isdot}} 

\renewbibmacro*{volume+number+eid}{%
  \printfield{volume}%
  \iffieldundef{number}
    {}
    {\setunit*{:}\printfield{number}}%
  \iffieldundef{eid}
    {}
    {\setunit{\addcomma\space}\printfield{eid}}%
}

\bibliography{bib.bib}

\title{Age-dependent random connection models with arc reciprocity: clustering and connectivity\thanks{A preliminary version of this work appeared in the proceedings of the 19th Workshop on \emph{Modelling and Mining Networks} (WAW 2024)~\cite{WAW24}}}

\thanksmarkseries{arabic}

\author{
Lukas L\"{u}chtrath \orcidlink{0000-0003-4969-806X}\thanks{Weierstrass Institute for Applied Analysis and Stochastics, Anton-Wilhelm-Amo-Str.\ 39, 10117 Berlin, Germany} \\ lukas.luechtrath@wias-berlin.de \\
\and
Christian M\"{o}nch \orcidlink{0000-0002-6531-6482}%\thanks{Johannes Gutenberg-Universität Mainz, Staudingerweg 9, 55128 Mainz, Germany}
\\ cmoench25@gmail.com
}

\date{March 17, 2026}

\begin{document}
\maketitle

\begin{spacing}{0.9}
\begin{abstract} 
\noindent We introduce a model for directed spatial networks. Starting from an age-based preferential attachment model in which all arcs point from younger to older vertices, we add \emph{reciprocal} connections whose probabilities depend on the age difference between their end-vertices. This yields a directed graph with reciprocal correlations, a power-law indegree distribution, and a tunable outdegree distribution. We consider two versions of the model: an infinite version embedded in \(\R^d\), which can be constructed as a weight-dependent random connection model with a non-symmetric kernel, and a growing sequence of graphs on the unit torus that converges locally to the infinite model. Besides establishing the local limit result linking the two models, we investigate degree distributions, various directed clustering metrics, and directed percolation.

\smallskip
\noindent\footnotesize{{\textbf{AMS-MSC 2020}: 05C80 (primary), 60K35, 05C82 (secondary)}

\smallskip
\noindent\textbf{Key Words}: Directed complex networks, reciprocal correlations, directed percolation, preferential attachment, weak local limit, weight-dependent random connection model}
\end{abstract}
\end{spacing}

%%%%%%%%%%%%%%%%%%%%
%%%% Introduction %%
%%%%%%%%%%%%%%%%%%%%

\section{Introduction and overview of results}\label{sec:Intro}
Directed networks in data science often arise from asymmetric interactions in which the probability of forming a link in one direction is shaped by how attractive the corresponding reverse interaction would have been. A canonical example is provided by social media platforms such as \emph{X} (formerly \emph{Twitter}), \emph{Instagram}, or \emph{YouTube}, whose follower and subscription graphs exhibit pronounced directionality, strong heterogeneity, and substantial but highly non-uniform reciprocation \cite{KwakLPM10,MyersSGL14,WattenhoferWZ12}. Users follow accounts of interest, but reciprocal connections occur in a markedly influence-dependent way: high-profile accounts attract many followers yet follow back only rarely, and the chance of reciprocation depends not only on global popularity but also on topical similarity, geographical proximity, and local patterns of engagement. Such dynamics generate local clustering, correlated reciprocation, and heavy-tailed indegree distributions, features widely observed in empirical directed data and closely tied to influence and attention effects \cite{ChaHBG10,KwakLPM10}. More broadly, reciprocity in real directed networks is consistently and significantly non-random across domains ranging from biological to social and economic networks \cite{GarlascheLliLoffredo2004}.

Capturing these effects in a mathematically transparent and analytically tractable manner remains a challenge. Classical spatial models and age-based preferential attachment mechanisms reproduce heterogeneity and geometry, but they typically do not model directed arc formation beyond the temporal ordering. As a consequence, they miss structural phenomena driven by reciprocal interaction, such as asymmetric but correlated neighbourhoods, elevated motif frequencies, and non-vanishing directed clustering. These are precisely the features that matter in applications to influence estimation, link prediction, recommendation systems, and the analysis of community and motif structure in directed graphs, where reciprocity is a central modelling primitive rather than a negligible correction \cite{YuZYK13,DurakKoldaPinarSeshadhri2012}.

\paragraph{Our contribution}
We analyse a spatial directed generative model that incorporates an explicit \emph{reciprocity mechanism} which was proposed by us in \cite{WAW24}. Each vertex is marked with a spatial position and a birth time, and attempts to connect to earlier vertices according to an age-dependent kernel. The distinctive feature of our model is that the probability of forming an arc $\x\to \y$ between two vertices $\x$ and $\y$ depends not only on the conventional spatial and temporal structure, but also on how attractive $\x$ would have been to $\y$ had the interaction occurred in the opposite direction. This reciprocal coupling induces correlation between incoming and outgoing processes while retaining full analytical tractability through a Poisson spatial embedding.

From a data-science perspective, this model serves as an interpretable generative mechanism for directed networks with realistic reciprocity effects. The parameters governing spatial decay, preference, and reciprocal attractiveness directly control local and global structure, making the model a promising candidate for simulation studies, synthetic data generation, and principled statistical inference for networks in which asymmetric but mutually influenced interactions are prominent.

\begin{figure}[t]
    \centering
    \includegraphics[width=0.75\textwidth]{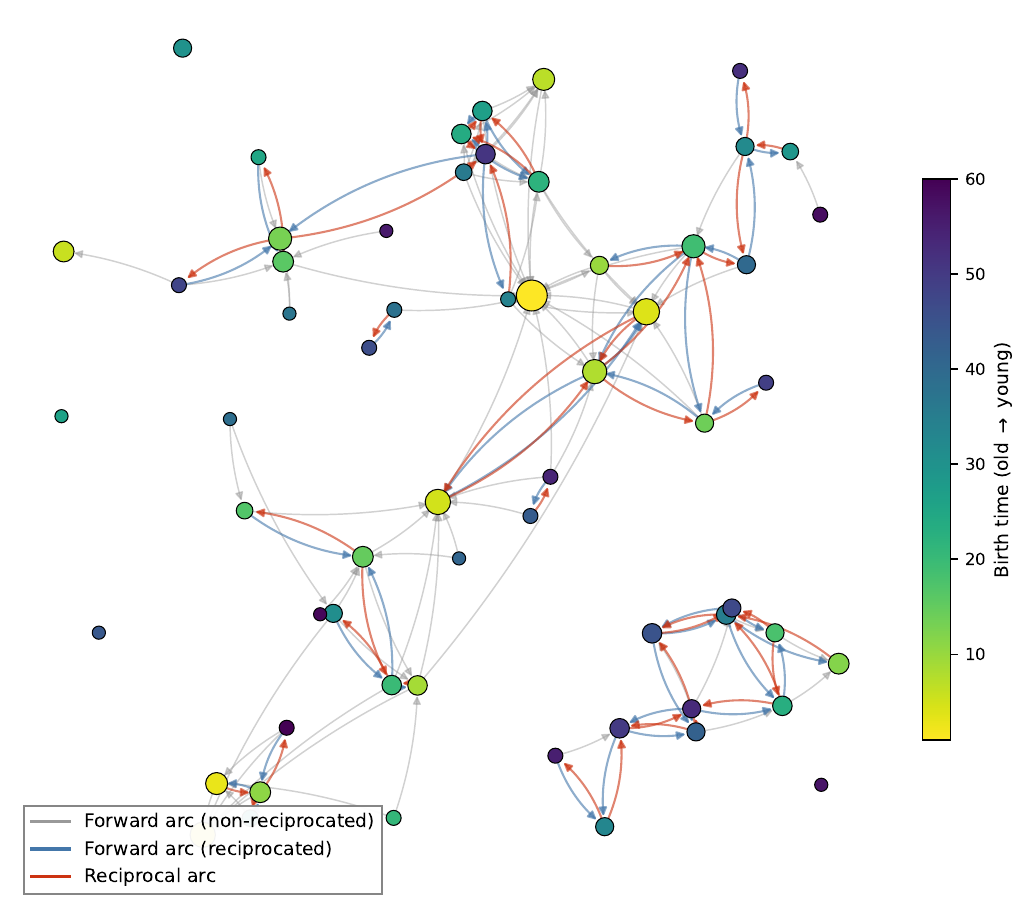}
    \caption{Simulation of a finitary toroidal variant of the DARCM (cf.\ Section~\ref{sec:DPA}) on the unit torus $[-\nicefrac{1}{2},\nicefrac{1}{2})^2$ with $N=60$ vertices, $\beta=0.4$, $\gamma=0.35$, $\delta=2.5$, and $\Gamma=1$. Vertex size is proportional to indegree; colour indicates birth time (dark = old, light = young). Gray arcs are non-reciprocated forward arcs (younger $\to$ older); blue arcs are forward arcs that were reciprocated; red arcs are the corresponding reciprocal arcs. Torus-wrapping arcs are suppressed for clarity.}
    \label{fig:simulation}
\end{figure}

\paragraph{Summary of results.}
Our analysis addresses three structural aspects essential for modelling real-world directed networks:

\begin{enumerate}
    \item \emph{Local connectivity and degree structure.}  
    We derive explicit formulae and asymptotics for the indegree and outdegree distributions. The indegree exhibits a power-law tail arising from temporal effects, while the outdegree may be either light-tailed or heavy-tailed, depending on the strength of the reciprocity. These results allow for the prediction of influential nodes and hub formation.

    \item \emph{Reciprocity-induced clustering.}  
    Motivated by common data-science metrics, we investigate two directed clustering quantities: a friend clustering coefficient and an interest clustering coefficient. Using Palm calculus for the underlying Poisson process, we show under which conditions they remain strictly positive in the sparse limit. This behaviour matches empirical observations in follower networks but sharply contrasts with classical preferential attachment and with the standard \emph{age-dependent random connection model}~\cite{GGLM2019}, for which directed clustering typically vanishes.

    \item \emph{Global geometry and reachability.}
    We further study directed percolation-type phenomena and accessibility properties, demonstrating how reciprocal interaction shapes the emergence of large weakly connected components and influences global navigation in the network. In particular, for sufficiently heavy-tailed degree distributions the critical edge intensity $\overset{\rightarrow}{\beta}_c$ vanishes, meaning that information cascades and viral spreading can occur globally even under high local failure rates whenever degree heterogeneity is strong enough.
\end{enumerate}

\paragraph{Relevance to network and data science.}
Reciprocal effects are among the most important structural drivers in directed network data, yet few analytically tractable models incorporate them explicitly. Our model offers:
\begin{itemize}
    \item a principled generative mechanism for directed graphs with tunable reciprocity,
    \item interpretable parameters that connect naturally to influence, locality, and user similarity,
    \item mathematical foundations that can inform inference procedures, simulation benchmarks, or structural analyses for machine learning on graphs.
\end{itemize}

\paragraph{Organisation of the paper.}
We define the model and its parameters in Section~\ref{sec:Model}. Sections~\ref{sec:degrees} and~\ref{sec:perc} analyse local degree structure and out-percolation, respectively.
In Section~\ref{sec:DPA}, we relate the infinite Poissonian model to a growing directed preferential attachment construction via weak local limits, and we deduce sparsity and empirical degree asymptotics.
Section~\ref{sec:cluster} studies reciprocity-driven clustering through friend and interest clustering coefficients. We conclude in Section~\ref{sec:outlook} with a discussion of open problems and future directions, including graph distances, strong percolation phenomena, statistical fitting to real network data, and the analysis of ranking measures such as PageRank under tunable reciprocity. Proofs are collected in Section~\ref{sec:proofs}.

%%%%%%%%%%%%%%%%%%%%
%%%%% Model %%%%%%%%
%%%%%%%%%%%%%%%%%%%%
\section{Model description} \label{sec:Model}
We begin with a description of the directed age-dependent random connection model (DARCM) as an infinite geometric digraph, which is the central subject studied in this work. We elaborate in Sections~\ref{sec:LLN} how this digraph appears as the weak local limit~\cite{BenjaminiSchramm2001} of a preferential-attachment-type sequence of growing networks that we introduce in Section~\ref{sec:DPA}; see Figure~\ref{fig:simulation} for a simulation of the finite version. Throughout, a \emph{digraph} $D$ is a countable collection of vertices $\scrV(D)$ together with a set 
\[
    \scrE(D)\subset\big\{(u,v)\in \scrV(D)^2\colon u\neq v\big\},
\]
indicating the arcs in $D$. A \emph{geometric digraph} is formally defined as a digraph $D$ together with a location map $\operatorname{loc}:\scrV(D)\to M$, where $M$ is some metric space. The location map encodes information of the spatial position of the vertices in the space $M$. In this paper, $M$ is always either $\R^d$ or a $d$-dimensional torus of finite diameter, equipped with the Euclidean metric or the torus metric, respectively.  

The vertex set of the \emph{directed age-dependent random connection model} (DARCM) is a unit intensity Poisson process \(\scrX\) on \(\R^d\times (0,1)\). We usually view \(\scrX\) as a marked Poisson point process~\cite{LastPenrose2017} on \(\R^d\) in which each point is assigned an independent mark uniformly distributed on \((0,1)\). We denote the vertices by \(\x=(x,t_x)\in\scrX\) and call \(x\in\R^d\) the vertex' \emph{location} and \(t_x\in(0,1)\) the vertex' \emph{birth time}. Hence, the map $\operatorname{loc}(\cdot)$ of the previous paragraph simply projects $\scrX$ to the spatial coordinate in the case of the DARCM. For two vertices \(\x=(x,t_x)\) and \(\y=(y,t_y)\) with \(t_y<t_x\), we refer to \(\y\) being \emph{older} than \(\x\) and \(\x\) being younger than \(\y\), respectively. Almost surely, no vertices are born at the same time. Our choice of identifying the marks as birth times is rooted in the local limit construction of Section~\ref{sec:DPA}. To define the distribution of directed edges or \emph{arcs} in the graph we introduce the following parameters:
\begin{enumerate}[(i)]
	\item A \emph{spatial profile} \(\rho:(0,\infty)\to[0,1]\), which is non-increasing, satisfies $\int_0^\infty\rho(z)\d z<\infty$ and controls the spatial decay of connection probabilities;
	\item a parameter \(\gamma\in(0,1)\) to adjust the power-law exponent of the indegree distribution;
	\item an edge intensity \(\beta>0\);
	\item and a \emph{reciprocity profile} \(\pi: [1,\infty)\to[0,1]\), which is non-increasing, satisfies \(\pi(1)=1\), and parametrises the likelihood of mutual linkage.
\end{enumerate}     
The directed graph \(\D=\D(\beta,\gamma,\rho,\pi)\) is built using these parameters via the following procedure:
\begin{enumerate}[(A)]
	\item 
        Given \(\scrX\), each vertex \(\x=(x,t_x)\) forms an arc to each older vertex \(\y=(y,t_y)\) (i.e.\ \(t_y<t_x\)) independently of all other potential arcs with probability
		\begin{equation}\label{eq:firstConnect}
			\rho\big(\beta^{-1} t_y^\gamma t_x^{1-\gamma}|x-y|^d\big).		
		\end{equation}
		If a corresponding arc is formed, we denote this by \(\x\rightarrow\y\) or \(\y\leftarrow\x\).
	\item 
        Given \(\scrX\) and all arcs created in (A), each vertex \(\y=(y,t_y)\) sends a \emph{reverse arc} to each \(\x=(x,t_x)\) with \(\x\to \y\) independently of all other potential reverse arcs with probability 
		\begin{equation} \label{eq:backConnect}
			\pi\big(\nicefrac{t_x}{t_y}\big).
		\end{equation}
		If such a reciprocal connection is made, we denote this event by \(\x\leftrightarrow\y\).
\end{enumerate}
For the choice of \(\pi\equiv 1\) each arc always points in both directions. Therefore, the choice of \(\pi\equiv 1\) corresponds to constructing an undirected graph, which we denote by \(\G=\D(\beta,\gamma,\rho,1)\). The graph \(\G\) thus defined is known as \emph{age-dependent random connection model}~\cite{GGLM2019}. Clearly, \(\D\) can be constructed from \(\G\) by first pointing all edges in \(\G\) from younger end-vertex to older end-vertex and then adding reverse arcs by way of~\eqref{eq:backConnect}. Alternatively, we can also identify the model \(\D(\beta,\gamma,\rho,0)\) with a variant $\vec{\G}$ of the age-dependent random connection model in which all edges are directed from the younger to the older vertex.

Let us quickly explain the motivation behind our construction. The location of a vertex describes some intrinsic parameters; two vertices have a close affinity if they are spatially close to each other. The age of a vertex models directly its attractiveness in the graph, the older a vertex the more arcs it attracts. This is a simplified \emph{preferential attachment} mechanism, based on the observation that in true preferential attachment networks it is the old vertices that tend to accumulate a lot of arcs~\cite{JacobMoerters2015}. In social networks, users tend to `follow' a friend, i.e.\ someone with intrinsic affinity to them, or famous users who have already accumulated a lot of followers. This is built into our model: spatial closeness makes arcs likely and arcs to old and thus very attractive vertices are favoured. Both effects are reflected in the monotonicity of \(\rho\). The integrability condition, together with \(\gamma<1\), ensures that the expected number of arcs incident to any vertex remains finite. Further, it is easy to see that \(\beta\) controls the overall intensity of arcs. Once the arcs from younger to older vertices are formed, the latter may form an arc back to the younger vertex. In terms of social networks, the established user with many followers decides whether to follow some of their followers in return. This happens with probability \(\pi\). Since \(\pi(1)=1\) and \(\pi\) is monotone, if the two vertices are approximately of the same age, the occurrence of a reverse arc is quite probable. However, the older the old vertex is compared to the younger one the less likely the presence of the reverse arc becomes. 

Commonly used types of spatial profiles for the generation of spatial networks are either \emph{long-range profiles} of polynomial decay, which we parametrise as
\[
	\rho(x):=\rho_\delta(x)=1\wedge x^{-\delta}, \qquad \text{ for some } \delta>1,
\] 
or \emph{short-range profiles} which we identify with the indicator function \(\1_{[0,1]}\). Using the short-range profile in~\eqref{eq:firstConnect}, the younger vertex \(\x\) only sends an arc to the older vertex \(\y\) if \(t_y^\gamma t_x^{1-\gamma}|x-y|^d\leq \beta\). Hence, an arc is formed with probability one if the `age-scaled distance' \(t_y^\gamma t_x^{1-\gamma}|x-y|^d\) between them is at most \(\beta\). For long-range profiles arcs to vertices at arbitrarily large `age-scaled distance' are present with positive probability. The polynomial tail softens the geometric restrictions of the model: the smaller \(\delta>1\), the softer these restrictions are. In this article, we mostly work in the long-range regime. Results for the short-range profile can then often be derived by taking the limit \(\delta\to\infty\). We therefore also include the short-range profile into our standard parametrisation of $\rho$ by setting \(\rho_\infty=\1_{[0,1]}\). Similarly, we shall assume that the reciprocity profile is of the form \(\pi(t)=t^{-\Gamma}\) for \(\Gamma\geq 0\). Note that for \(\Gamma=0\), we have \(\pi\equiv 1\) yielding the original age-dependent random connection model as outlined above. With the profiles fixed, we have a parametrisation \(\D=\D[\beta,\gamma,\delta,\Gamma]\) in terms of the four real numbers \(\beta>0\), \(\gamma\in(0,1)\), \(\delta>1\), and \(\Gamma\geq 0\). We present our main results for the DARCM in the following section in terms of this parametrisation. This particularly includes the distribution of in and out-neighbourhoods as well as its weak percolation behaviour. 

\paragraph{Notation.} Throughout the manuscript, we use standard Landau notation. For two non-negative functions \(f,g\), we additionally use \(f\asymp g\) to indicate \(f=\Theta(g)\) (where we make use of the latter notation still). To keep notation concise, we may write \(\x\in\D\) for a vertex \(\x\in\scrV(\D)\). Furthermore, when summing over pairs of vertices of the graph, i.e.\ \(\sum_{\x,\y\in\D}\), the vertices are always to be understood as \emph{distinct}. However note, that our model does not include any self-loops so that, typically, diagonal pairs \((\x,\x)\) cannot contribute to the sum.

%%%%%%%%%%%%%%%%%%%%%%%%%%%
%%% Neighbourhoods %%%%%%%%
%%%%%%%%%%%%%%%%%%%%%%%%%%%
\section{Degree distributions of the DARCM}\label{sec:degrees}
    In this section, we give the degree distributions for in- and outdegree in the digraph \(\D=\D[\beta,\gamma,\delta,\Gamma]\). To properly formulate our results, we have to work in the \emph{Palm version} of the model~\cite{LastPenrose2017}. That is, we assign to the origin \(o\) an independent and uniformly distributed birth time \(U_o\) and add the vertex \(\o=(o,U_o)\) to \(\D\) by way of the above procedure described in~\eqref{eq:firstConnect} and~\eqref{eq:backConnect}. We denote the resulting graph by \(\D_o\) and the underlying probability measure by \(\P_o\). As the distinguished vertex \(\o\) can be seen as a typical vertex that has been shifted to the origin, we also refer to \(\o\) as the root vertex of \(\D_o\). 

For a given vertex \(\x\), let us denote by 
    \[
        \scrN^{\text{in}}(\x):=\{\y\in\scrX: \y\rightarrow\x\}
    \]
	the fan in of \(\x\) in \(\mathscr{D}\) and by \(\sharp\scrN^{\text{in}}(\x)\) its indegree. If \(\x=\0\) is the origin, we simply write \(\scrN^{\text{in}}\). For the fan out and the outdegree, we use the analogous notations \(\scrN^{\text{out}}(\x)\) or \(\scrN^{\text{out}}\), and \(\sharp\scrN^{\text{out}}(\x)\) or \(\sharp\scrN^{\text{out}}\), respectively. 

Let the vertex \(\x=(x,u)\) be given. Then by the same arguments as in~\cite[Prop.\ 4.1]{GGLM2019}, the in- and out-neighbourhood of \(\x\) form Poisson processes on \(\R^d\times(0,1)\) with respective intensity measures
\[
    \lambda_{\x}^\text{in}(\d(y,s)) = \Big(\1_{\{u\geq s\}}\big(\tfrac{s}{u}\big)^\Gamma\rho(\beta^{-1}u^{1-\gamma}s^\gamma|y-x|^d) + \1_{\{u<s\}}\rho(\beta^{-1} u^\gamma s^{1-\gamma}|y-x|^d) \Big)\d y \, \d s,
\]
and
\[
     \lambda_{\x}^\text{out}(\d (y,s)) = \Big(\1_{\{u\geq s\}}\rho(\beta^{-1}u^{1-\gamma}s^\gamma|y|^d) + \1_{\{u<s\}}\big(\tfrac{u}{s}\big)^\Gamma\rho(\beta^{-1} u^\gamma s^{1-\gamma}|y|^d) \Big)\d y \, \d s.
\]
We do not give the proof here but focus on the in and outdegree distribution, for which, however, this statement is of good use.
    
\begin{theorem}[Neighbourhoods in DARCM]\label{thm:neighbourhood}
    Let \(\beta>0\), \(\gamma\in(0,1)\), \(\delta>1\), and \(\Gamma\geq 0\). Almost surely, the root \(\0\) in \(\D_o\) has finite degree. More precisely, for \(k\in\N_0\), we have 
    \begin{enumerate}[(i)]
        \item 
    	   for the indegree
    	   \begin{equation*}
    		  \begin{aligned}
    		      & \P_o\big(\sharp\scrN^{\text{in}}=k\big)\asymp k^{-1-\nicefrac{1}{\gamma}}. 
    		  \end{aligned}	
    		\end{equation*}
    	\item 
            For the outdegree, we have, 
            \begin{enumerate}[(a)]
            	\item
            		if \(\Gamma>\gamma\), then \(\sharp\scrN^{\text{out}}\) is Poisson distributed with parameter \(\lambda\asymp \tfrac{\omega_d \delta \beta}{\delta-1}(\tfrac{1}{(\Gamma-\gamma)}+\tfrac{1}{\gamma})\), where \(\omega_d\) denotes the volume of the \(d\)-dimensional unit ball.
            	\item 
            		If \(\Gamma=\gamma\), then
            			\[
            		        \P_o\big(\sharp\scrN^{\text{out}}=k\big) \asymp \int_0^1 \frac{u \log(1/u)^k}{k!} \d u = 2^{-(k+1)}.
            			\]
            	\item 
            		If \(\Gamma<\gamma\), then 
            		\[
                		\begin{aligned}
               	    			& \P_o\big(\sharp\scrN^{\text{out}}=k\big)\asymp k^{-1-\nicefrac{1}{(\gamma-\Gamma)}}.
               	 		\end{aligned}
            		\]
            \end{enumerate} 		
    \end{enumerate}
\end{theorem}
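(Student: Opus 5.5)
The plan is to condition on the birth time \(U_o=u\) of the root and use that, given \(u\), both \(\scrN^{\text{in}}\) and \(\scrN^{\text{out}}\) are Poisson processes with the intensity measures \(\lambda^{\text{in}}_{\o}\) and \(\lambda^{\text{out}}_{\o}\) stated above. Hence \(\sharp\scrN^{\text{in}}\) and \(\sharp\scrN^{\text{out}}\) are mixed Poisson variables,
\[
\P_o(\sharp\scrN^{\text{in/out}}=k)=\int_0^1 e^{-\Lambda(u)}\frac{\Lambda(u)^k}{k!}\d u ,
\]
with \(\Lambda(u)=\lambda^{\text{in/out}}_{\o}(\R^d\times(0,1))\). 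This also gives a.s.\ finiteness once \(\Lambda(u)<\infty\) for a.e.\ \(u\).

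\textbf{Step 1: computing the intensities.} The spatial integral is \(\int_{\R^d}\rho(\beta^{-1}a|y|^d)\d y=\beta a^{-1}\int\rho(|y|^d)\d y=\beta a^{-1}\omega_d\frac{\delta}{\delta-1}\) (the polar-coordinate identity \(\int_{\R^d}\rho(|y|^d)\d y=\omega_d\int_0^\infty\rho(r)\d r\)). Write \(c=\beta\omega_d\delta/(\delta-1)\).

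For the indegree,
\[
\Lambda^{\text{in}}(u)=c\Big(\int_0^u (s/u)^\Gamma u^{\gamma-1}s^{-\gamma}\d s+\int_u^1 u^{-\gamma}s^{\gamma-1}\d s\Big).
\]
The first term is \(c\,u^{0}/(1+\Gamma-\gamma)\), which is bounded. The second term is \(\frac{c}{\gamma}(u^{-\gamma}-1)\). So \(\Lambda^{\text{in}}(u)\asymp u^{-\gamma}\) as \(u\to0\), up to a bounded additive term.

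For the outdegree,
\[
\Lambda^{\text{out}}(u)=c\Big(\int_0^u u^{\gamma-1}s^{-\gamma}\d s+\int_u^1 (u/s)^\Gamma u^{-\gamma}s^{\gamma-1}\d s\Big).
\]
The first term equals \(c/(1-\gamma)\). The second equals \(c\,u^{\Gamma-\gamma}\int_u^1 s^{\gamma-\Gamma-1}\d s\), which gives three regimes:
\begin{itemize}
\item if \(\Gamma>\gamma\), it is \(\frac{c}{\Gamma-\gamma}(1-u^{\Gamma-\gamma})\), bounded and tending to a constant;
\item if \(\Gamma=\gamma\), it is \(c\log(1/u)\);
\item if \(\Gamma<\gamma\), it is \(\frac{c}{\gamma-\Gamma}(u^{\Gamma-\gamma}-1)\asymp u^{-(\gamma-\Gamma)}\).
\end{itemize}
(The constant quoted in (a) differs slightly from what this computation gives, but it only matters up to \(\asymp\).)

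\textbf{Step 2: asymptotics of the mixed Poisson law.} Suppose \(\Lambda(u)\asymp u^{-\alpha}\) near \(0\). Substitute \(\lambda=\Lambda(u)\), so that \(u\asymp\lambda^{-1/\alpha}\) and \(\d u\asymp \lambda^{-1-1/\alpha}\d\lambda\). Then
\[
\P(\,\cdot=k)\asymp\int e^{-\lambda}\frac{\lambda^{k-1-1/\alpha}}{k!}\d\lambda=\frac{\Gamma(k-1/\alpha)}{k!}\asymp k^{-1-1/\alpha},
\]
using Stirling, or the standard ratio \(\Gamma(k-a)/\Gamma(k+1)\sim k^{-1-a}\).

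This requires care. \(\Lambda\) is only comparable to \(u^{-\alpha}\) plus bounded terms, so I will sandwich it between \(c_1u^{-\alpha}-C\) and \(c_2u^{-\alpha}+C\). I will then show that the contribution from \(u\) bounded away from \(0\) is exponentially small in \(k\), and that changing the constants \(c_1,c_2\) only changes the integral by constant factors. The last point holds because \(\int_0^\infty e^{-a\lambda}\lambda^{k-1-1/\alpha}\d\lambda=a^{-(k-1/\alpha)}\Gamma(k-1/\alpha)\), and a factor \(a^{-k}\) is not \(\Theta(1)\) unless \(a=1\).

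That last observation is the \textbf{main obstacle}: constant-factor comparability of \(\Lambda\) does not automatically give \(\asymp\) for the point masses. I will use the exact leading asymptotics instead. The leading term is exactly \(\frac{c}{\alpha}u^{-\alpha}\) plus a bounded correction \(b(u)\). Writing \(e^{-\Lambda}\Lambda^k\) with \(\Lambda=L+b\), where \(L=\frac{c}{\alpha}u^{-\alpha}\), gives
\[
(1+b/L)^k e^{-b}=\Theta(1)\quad\text{on the region } L\asymp k ,
\]
which is where the mass concentrates, since \(b/L=O(1/k)\) there. The regions with \(L\ll k\) or \(L\gg k\) contribute negligibly by Chernoff-type bounds on the Poisson weights.

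Applying Step 2:
\begin{itemize}
\item \(\alpha=\gamma\) yields (i);
\item \(\alpha=\gamma-\Gamma\) yields (ii)(c);
\item for \(\Gamma=\gamma\), \(\Lambda=c\log(1/u)+O(1)\) gives \(\int_0^1 u^{c}\log(1/u)^k c^k/k!\,\d u=c^k/(c+1)^{k+1}\), which is geometric. In the normalisation of the statement this reads \(2^{-(k+1)}\).
\end{itemize}

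In case (a), \(\Lambda^{\text{out}}(u)\) is bounded and varies with \(u\). I will read "Poisson with parameter \(\asymp\)" as a mixed Poisson whose parameter lies between two constants of the given order. This yields light (Poisson-type) tails.
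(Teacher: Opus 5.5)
Your proposal is correct and follows the paper's route: condition on \(U_o=u\), use that the in- and out-neighbourhoods are Poisson with the intensities stated before the theorem, and analyse the resulting mixed Poisson laws. The only structural difference is the indegree, where the paper cites the power law for younger neighbours from \cite{GGLM2019} and adds the reciprocating older ones as an independent Poisson variable, which is your observation that this part of \(\Lambda^{\text{in}}\) is the constant \(c/(1+\Gamma-\gamma)\); your exact form \(\Lambda=\tfrac{c}{\alpha}u^{-\alpha}+b\) with constant \(b\) is also what makes the passage to point masses rigorous, a step the paper only sketches with \(\asymp\) under the integral. One caveat on (ii)(b): your value \(c^k/(c+1)^{k+1}\) (the additive constant \(c/(1-\gamma)\) changes it only by a bounded factor) is \(\asymp 2^{-(k+1)}\) only when \(c=\beta\omega_d\delta/(\delta-1)=1\), and otherwise the ratio is exponential in \(k\); the paper's proof makes the same tacit normalisation by integrating \(u\log(1/u)^k/k!\), so this is a defect of the statement rather than of your argument.
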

	
By the refined Campbell formula~\cite[Theorem~9.1]{LastPenrose2017}, an immediate consequence of the above theorem is that the graph \(\D\) is locally finite. That is, each vertex has almost surely finite in- and outdegree, which are distributed according to the above distributions. We infer from the theorem that the indegree always follows a power law. This coincides with our understanding of social media networks where there are a few but noticeably many `influencers' with far more followers than the average. Whether the outdegree is heavy-tailed as well on the other hand depends on the strength of the reciprocity parameter \(\Gamma\). Note, however, that Part~(ii) implies that the number of original out-neighbours (i.e.\ older out-neighbours) is only Poisson distributed. As the additional out-neighbours are all also in-neighbours, the overall degree distribution of the underlying graph has power-law exponent \(1+\nicefrac{1}{\gamma}\), as shown in~\cite{GGLM2019}. This property is often referred to as \emph{scale-free}~\cite{BA99,AlbertBarabasi2002} to emphasise that there is no fast concentration of degrees around a `standard value' (the scale). However, as the notion of scale is quite misleading in a spatial setting, we shall simply refer to heavy-tailed or power-law degree distributions. For a more involved discussion of how the notion of scale may be interpreted in the degree context, we refer the reader to~\cite{ChungLu2006}.
	
%%%%%%%%%%%%%%%%%%%%%%%
%% Weak percolation %%%
%%%%%%%%%%%%%%%%%%%%%%%
\section{Out-percolation in the DARCM} \label{sec:perc}
Originally introduced by Broadbent and Hammersley in 1957~\cite{Broadbent1957}, percolation has drawn a lot of attention from the mathematical community and is widely studied until today. Percolation models use connectivity in random graph as a simple approximation for spreading or permeation dynamics. The original and most studied percolation model is \emph{Bernoulli bond percolation} on the Euclidean lattice \(\Z^d\). In this model, every edge of the nearest-neighbour graph on \(\Z^d\) is present with probability \(p\in(0,1]\), independently of all other edges, and absent with probability \(1-p\). The result of interest is then the existence of a critical parameter \(p_c\in(0,1)\) such that there is no infinite connected component for \(p<p_c\) but an infinite connected component exists almost surely for \(p>p_c\). Similarly, in \emph{Bernoulli site percolation} each vertex is present with probability \(p\) or removed with all its adjacent edges with probability \(1-p\). We refer to the book of Grimmet~\cite{Grimmett1999} for an overview on important results. As an equivalent in continuum space, Gilbert introduced in 1961 a model where the vertices are given through a Poisson process and each pair of vertices is connected if their distance is smaller than some threshold \(\beta\)~\cite{Gilbert61}. In this situation, one is interested in the existence of a critical threshold \(\beta_c\in(0,\infty)\) above which an infinite connected component is present and below which the graph decomposes into finite components. By the thinning property of the Poisson process, it is either possible to first construct the graph and then remove each vertex independently with probability \(p\), or to vary the Poisson intensity before constructing the graph to obtain the same model. In particular, \(\beta\) in Poissonian based Gilbert graphs and in the DARCM can be interpreted either as scaling distances, as varying the Poisson intensity, or as parametrising site-percolation on the resulting graph~\cite{LastPenrose2017}. For an overview on continuum percolation, we refer to the book of Meester and Roy~\cite{MeesterRoy1996}. Many more percolation models have been studied since, both on lattices~\cite{Schulman1983,Yukich2006,DeijfenHofstadHooghiemstra2013}, and in the continuum~\cite{DeprezWuthrich2019,GGLM2019,GLM2021,BodeFountoulakisMuller2015}. 

In the context of networks, percolation may serve to model global breakdown if each vertex has an independent local failure probability \(1-p\). Here, a network breakdown is to be understood as a decomposition from a  basically connected network to only finite components, effectively preventing non-local communication. Therefore, percolation concepts play an important role in our understanding of network effects. Another interpretation is the following: consider, for example, an infinite social media network in which a message is shared. Each vertex that receives the message shares it with all its neighbours with probability \(p\). If \(p>p_c\), then the message may spread through the network for eternity, while for \(p<p_c\) the message is guaranteed to only reach a finite number of vertices. In particular, if \(p_c=0\) or equivalently \(\beta_c=0\), then a message may spread through the network forever, even if local random defects are very likely. 

In our directed setting, there are now three types of connectedness. Let us denote by \(\x \rightsquigarrow \y\) the event that there exists a path from \(\x\) to \(\y\) using only outgoing arcs. Then, \(\y\) is said to be in the \emph{out-fan} of \(\x\) if \(\x\rightsquigarrow\y\) and \(\y\) is said to be in the \emph{in-fan} of \(\x\) if \(\y\rightsquigarrow\x\). Furthermore, \(\x\) and \(\y\) belong to a \emph{strongly connected component} if both \(\x\rightsquigarrow\y\) and \(\y\rightsquigarrow\x\) hold, which we denote from here on onwards as \(\x\leftrightsquigarrow\y\). Since, we work in the continuum, the natural parameter for our percolation question is the edge intensity \(\beta\) as outlined above. We define two critical intensities
\begin{equation*}%\label{eq:weakCriticalBeta}
	\begin{aligned}
		\overset{\rightarrow}{\beta}_c 
		& 
			:= \overset{\rightarrow}{\beta}_c(\gamma,\delta,\Gamma) 
            = 
			\sup\big\{\beta>0: \P\{\o\rightsquigarrow\infty\}=0\big\},
	\end{aligned}
\end{equation*}
where \(\o\rightsquigarrow\infty\) denotes the event that the origin starts an infinite long directed (self-avoiding) path of outgoing arcs. Similarly, we define 
\begin{equation*}%\label{eq:strongCriticalBeta}
	\begin{aligned}
	\overset{\leftrightarrow}{\beta}_c 
	&
		:= \overset{\leftrightarrow}{\beta}_c(\gamma,\delta,\Gamma) 
		= \sup\big\{\beta>0: \P\{\o\leftrightsquigarrow\infty\}=0\big\},
	\end{aligned}
\end{equation*}
where \(\o\leftrightsquigarrow\infty\) denotes that \(\o\) starts two infinite paths, one consisting of outgoing and one of ingoing arcs.

We are interested in whether the directed graph \(\D\) possesses a subcritical out-percolation phase, i.e.\ for which parameters is $$\overset{\leftrightarrow}{\beta}_c\geq \overset{\rightarrow}{\beta}_c>0,$$ such that for small enough \(\beta\) there is no undirected path to infinity. We focus on out-percolation, since the DARCM is mainly intended as a toy model for social network formation, where information would naturally spread in opposite arc direction and a directed connection to $\infty$ would correspond to the spread of global `viral' trends. Denoting 
\begin{equation*}
    \overset{\rightarrow}{\theta}(\beta) = \P_o(\0\rightsquigarrow\infty \text{ in }\D),
\end{equation*}
we obtain the following result:

\begin{theorem}[Existence vs.\ non existence of an out-percolation phase] ~\ \label{thm:perc}
\begin{enumerate}[(i)]
    \item If \(\gamma<\nicefrac{(\delta+\Gamma)}{(\delta+1)}\), then \(\overset{\rightarrow}{\theta}(\beta)=0\) for all sufficiently small \(\beta\) and consequently \(\overset{\rightarrow}{\beta}_c>0\).
    \item If \(\gamma>\nicefrac{(\delta+\Gamma)}{(\delta+1)}\), then for all \(\beta>0\), we have \(\overset{\rightarrow}{\theta}(\beta)>0\) and consequently \(\overset{\rightarrow}{\beta}_c =0\).
\end{enumerate} 
\end{theorem}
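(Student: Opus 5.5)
Part (i) is a first-moment bound on directed paths; part (ii) is an explicit embedding of an infinite directed path. Throughout, $\rho=\rho_\delta$ and $\pi(t)=t^{-\Gamma}$, and we use the out-neighbourhood intensity $\lambda^{\text{out}}_{\x}$ stated before Theorem~\ref{thm:neighbourhood}.

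\emph{Part (i): path counting.} By the Mecke formula, the expected number of self-avoiding directed paths $\0=\x_0\to\x_1\to\dots\to\x_n$ of length $n$ is
\[
\int\cdots\int\prod_{i=1}^n \kappa(\x_{i-1},\x_i)\,\d\x_1\cdots\d\x_n .
\]
Here $\kappa(\x,\y)$ is the density of $\lambda^{\text{out}}_{\x}$. It equals $\rho(\beta^{-1}t_x^{1-\gamma}t_y^\gamma|x-y|^d)$ if $t_y<t_x$, and $(t_x/t_y)^\Gamma\rho(\beta^{-1}t_x^\gamma t_y^{1-\gamma}|x-y|^d)$ if $t_y>t_x$.

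The spatial integrals cannot simply be integrated out one step at a time, because that only gives the mean-field kernel $\beta(t_x\vee t_y)^{-1}(t_x\wedge t_y)^{-\gamma}$. That kernel has no subcritical phase when $\gamma\ge 1/2$, so it is too crude here. Instead I would follow the strategy of~\cite{GLM2021} for the weight-dependent random connection model. The idea is to use the polynomial decay $\rho_\delta$ and count only \emph{minimal} paths: paths that cannot be shortcut by an edge between two vertices farther apart along the path. This requires a ``skeleton'' decomposition of paths through their local age minima.

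With this in place, the operator to control acts on the birth time at each step with kernel
\[
K(s,t)=\beta\,\Big(\tfrac{s}{t}\Big)^{\Gamma\1_{\{t>s\}}}\,(s\wedge t)^{-\gamma}(s\vee t)^{\gamma-1},
\]
combined with the $\delta$-dependent correction from long edges. The aim is to find a test function $f(t)=t^{-a}$ with $\int K(s,t)f(t)\d t\le C\beta f(s)$, summable along the truncated skeleton. The reciprocity factor $(s/t)^\Gamma$ damps arcs from old to young vertices, so young vertices are reached only through costly reciprocal steps.

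Balancing the exponents should yield a feasible $a$ precisely when $\gamma(\delta+1)<\delta+\Gamma$, i.e.\ $\gamma<\nicefrac{(\delta+\Gamma)}{(\delta+1)}$. For $\Gamma=1$ this condition is always satisfied, and for $\Gamma=0$ it reduces to $\gamma<\nicefrac{\delta}{\delta+1}$, which matches the known threshold for the undirected model. Once the bound holds, $\overset{\rightarrow}{\theta}(\beta)\le\lim_n \E[\#\text{paths of length }n]\le\lim_n C(c\beta)^n=0$ for $\beta$ small.

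\emph{Part (ii): explicit construction.} Fix $\beta>0$ and build the path
\[
\0\to\y_1\leftarrow\!\!\!\to\z_1\to\y_2\leftarrow\!\!\!\to\z_2\cdots
\]
through alternating strata. The $\y_k$ are very old vertices (hubs) with $t_{y_k}=\varepsilon^{k}$-scale ages, and each $\z_k$ is a young vertex reached by a reciprocal arc. Consecutive hubs lie at distance roughly $r_k$.

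A hub $\y_k$ of age $s$ has $\asymp s^{-\gamma}$ young in-neighbours within its range $\asymp (\beta s^{-\gamma})^{1/d}$. Each reciprocates with probability $(u/s)^{-\Gamma}$, where $u$ is the in-neighbour's birth time. The selected young vertex $\z_k$ then connects via the long-range tail to an even older hub $\y_{k+1}$ at distance $r_{k+1}$, with probability $\asymp \beta^\delta (s')^{-\gamma\delta}r_{k+1}^{-d\delta}$ per hub, where $s'$ is the age of $\y_{k+1}$. Summed over the available hubs at that distance, this gives a success probability for the step.

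Choosing $s'=s^{K}$ with $K$ large and optimising the distances, the exponent of $s$ in the step success probability should become negative exactly when $\gamma>\nicefrac{(\delta+\Gamma)}{(\delta+1)}$. In that regime the failure probabilities $p_k$ are summable, so $\prod_k(1-p_k)>0$, and by independence across disjoint regions we get $\overset{\rightarrow}{\theta}(\beta)>0$.

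The main obstacle is part (i): making the skeleton/minimal-path first-moment argument rigorous with the asymmetric kernel, and getting the sharp exponent rather than the mean-field one.
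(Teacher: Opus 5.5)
The gap is in part (i). Your concluding bound \(\overset{\rightarrow}{\theta}(\beta)\le\lim_n\E[\sharp\text{paths of length }n]\) cannot hold throughout the claimed regime, whatever kernel or test function you use. By the Mecke formula the spatial integrals factorise exactly, and
\[
\E_o\big[\sharp\{(\x_1,\x_2)\colon \o\to\x_1\to\x_2,\ t_{x_1}<t_o\wedge t_{x_2}\}\big]=\Big(\frac{\beta\omega_d\delta}{\delta-1}\Big)^2\int_0^1\d t_o\,t_o^{\gamma-1}\int_0^{t_o}\d t_1\,t_1^{\Gamma-2\gamma}\int_{t_1}^1\d t_2\,t_2^{\gamma-1-\Gamma}.
\]
This is infinite for every \(\beta>0\) once \(\gamma\ge\nicefrac{(1+\Gamma)}{2}\). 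That is also the true threshold of the mean-field kernel you wrote down, not \(\nicefrac12\). Discarding paths with a shortcut does not help, because \(\x_2\) is typically at distance of order \(t_{x_1}^{-\gamma/d}\) from \(\o\). Since \(\nicefrac{(1+\Gamma)}{2}<\nicefrac{(\delta+\Gamma)}{(\delta+1)}\) whenever \(\delta>1\) and \(\Gamma<1\), a first moment over all (minimal) paths from \(\o\) fails on part of the regime. It is dominated by excursions to a very old vertex and back out along its many reciprocal arcs.

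The paper avoids these excursions as follows. On \(\{\o\rightsquigarrow\infty\}\) one has \(\inf\{t_x\colon\o\rightsquigarrow\x\}=0\), so there are paths of unbounded length from \(\o\) that end in their oldest vertex. Only such paths, whose skeleton is \emph{decreasing}, are counted, and Borel--Cantelli concludes.

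The second missing piece is the estimate where the threshold actually enters, Lemma~\ref{lem:twoConnect}(a). Take \(\x=(x,t)\), \(\y=(y,s)\) with \(s<t\), beyond the sure-connection distance \(|x-y|^d\ge\beta s^{-\gamma}t^{\gamma-1}\). Then the expected number of connectors \(\z=(z,u)\), \(u>t\), on a two-step directed path between \(\x\) and \(\y\) (in either direction) is at most \(C\beta\) times the direct arc probability. At \(\z\), one arc is necessarily an old-to-young reciprocal arc, giving a factor \(u^{-\Gamma}\). The other arc is long and gives the \(\rho_\delta\)-tail. Together they lead to \(\int_t^1u^{-\Gamma-(1+\delta)(1-\gamma)}\d u\asymp t^{1-\Gamma-(1+\delta)(1-\gamma)}\), which holds precisely when \(\gamma<\nicefrac{(\delta+\Gamma)}{(\delta+1)}\).

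The paper iterates this over the local maxima of the path. The coupling \(\widetilde{\D}\) makes close pairs double arcs, so shortcut-free paths never use connectors there. Applying BK over skeleton segments then gives \eqref{eq:BK}. The decreasing-skeleton integral is at most \(\big(\tfrac{\beta\omega_d\delta}{(\delta-1)(1-\gamma)}\big)^k\) for every \(\gamma<1\), so no test function is needed.

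Your part (ii) follows the paper's route (Lemma~\ref{lem:twoConnect}(b), iterated), but the scale choice is wrong. With \(s'=s^K\), the expected number of successful connector--hub pairs is of order \(s^{\Gamma-\gamma+K\delta(1-\gamma)}\), which tends to \(0\) for large \(K\). You need \(K\in\big(1,\tfrac{\gamma-\Gamma}{\delta(1-\gamma)}\big)\), which is the paper's \(\alpha_1\). This window is non-empty exactly when \(\gamma>\nicefrac{(\delta+\Gamma)}{(\delta+1)}\).
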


It would be interesting to derive a more complete picture of the percolation regimes by also considering in-percolation, strong percolation, and the regime boundary. We leave this for future work.

\section{Directed age-based preferential attachment} \label{sec:DPA}
We now discuss a directed version of the age-based preferential attachment model~\cite{GGLM2019} from which the DARCM arises naturally as local limit; recall Figure~\ref{fig:simulation} for a simulation. Choose \(\beta>0\), \(\gamma\in(0,1)\), \(\delta>1\), and \(\Gamma\geq 0\) to build a growing sequence of directed graphs \((\D_t:t\geq 0)\) as follows: At time \(t=0\), the graph \(\D_0\) is the empty graph consisting of neither vertices nor arcs. Then
	\begin{itemize}
		\item vertices arrive successively after independent exponential waiting times with mean \(1\) and are placed uniformly on the unit torus \([\nicefrac{1}{2},\nicefrac{1}{2})^d\).
		\item Given the graph \(\D_{t-}\), a vertex \(\x=(x,t)\) newly born at time \(t\) and placed at \(x\) forms an arc to each already existing vertex \(\y=(y,s)\), born at an earlier time \(s<t\) and located at \(y\), independently with probability 
		\begin{equation*}%\label{eq:PAconnect}
			\rho_\delta\Big(\frac{t \operatorname{d}_1(x,y)^d}{\beta (\nicefrac{t}{s})^\gamma}\Big),
		\end{equation*}
        where 
        \[
    	\operatorname{d}_1(x,y):=\min\big\{|x-y+u|:u\in\{-1,0,1\}^{\times d}\big\} 
        \]
		denotes the torus metric. If the arc \(\x\rightarrow\y\) has been formed, the older vertex \(\y\) forms an arc to \(\x\) independently with probability \((\nicefrac{s}{t})^\Gamma\).
	\end{itemize}
	
	The idea of preferential attachment goes back to Barab\'{a}si and Albert~\cite{BA99}, spatial versions of preferential attachment were introduced in \cite{AielloEtAl2008,JacobMoerters2015}. The age-dependent random connection model of \cite{GGLM2019} aims at approximating the model of \cite{JacobMoerters2015} whilst keeping as much independence in the connection mechanism as possible. The same approach motivates our directed version.

    We next explain how the process \((\D_t:t\geq 0)\) of finite digraphs has the random graph \(\D\) as its local limit. The notion of weak local limit was introduced independently by Benjamini und Schramm~\cite{BenjaminiSchramm2001}, and by Aldous and Steele~\cite{AldousSteele2004} to study local characteristic of growing graph sequences. Here, we use the version of \emph{local limit in probability}~\cite{vanderHofstadRGCN2_2024}, which is more suitable in the context of random graphs. 
    Loosely speaking, the local limit, which is always a rooted graph, approximates the local neighbourhood of a uniform chosen vertex in the sense that the subgraph induced by all vertices within fixed finite graph distance \(k\) looks with high probability like the subgraph up to graph distance \(k\) of the root in the limiting graph. While this concept is well-established in the undirected graph setting~\cite{vanderHofstadRGCN2_2024}, the undirected graph setting is often more involved. This is due to the fact that the exploration process inducing the subgraph of bounded graph distance is less clear because of the arcs' orientations~\cite[Remark~1.1]{vanderhofstad2024giants}. However, unlike in the general situation, we can make use of the structure provided by the underlying Poisson process. As we shall see in Section~\ref{sec:edge-marked}, our construction allows us to encode all randomness of \(\D_t\) and \(\D\) in terms of marked ergodic point processes. That is, the according point process contains all relevant information required to build the digraphs \(\D_t\) and \(\D\). This enables us to apply law of large numbers for point processes~\cite{PenroseYukich2003}, which ultimately give rise to even slightly stronger limit results in the same vein as in~\cite{JacobMoerters2015}, see Proposition~\ref{prop:LLN} below. Particularly, our local limit is stronger than the one established in~\cite{Garavaglia_2020_pagerank,vanderhofstad2024giants}  

    Let us call any digraph with a distinguished root vertex as a rooted digraph. Recall \(\D_o\), the DARCM with a root vertex added at the origin. 
                
    \begin{theorem}[DARCM as local limit]\label{thm:DARCMasLL}
    	Let \(H\) be a non-negative functional that acts on rooted digraphs and their root and that only depends on a bounded graph neighbourhood of the root in the undirected sense. Assume further that the family
    	\begin{equation*} 
    		\Big(\frac{1}{\sharp \scrV(\D_t)}\sum_{\x\in \D_t}H(\x,\D_t)\Big)_{t\geq 0}
    	\end{equation*} 
    	is uniformly integrable. Then, we have in probability and in \(L^1\),
		\begin{equation}\label{eq:DARCMasLL}
			\frac{1}{\sharp \scrV(\D_t)}\sum_{\x\in \D_t}H(\x,\D_t) \longrightarrow \E_o[H(\0,\D_o)],
		\end{equation}
		as \(t\to\infty\). 
    \end{theorem}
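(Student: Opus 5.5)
The plan is to follow the rescaling strategy of the age-dependent random connection model and of spatial preferential attachment. We encode both the finite graphs $\D_t$ and the limit $\D$ as deterministic functionals of one marked Poisson process, and then apply a law of large numbers for stabilising functionals of point processes~\cite{PenroseYukich2003}.

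\textbf{Step 1: Rescaling.} We map the torus $[-\nicefrac12,\nicefrac12)^d$ to the torus $\mathbb T_t$ of volume $t$ via $x\mapsto t^{1/d}x$, and birth times via $s\mapsto s/t$. Vertices born up to time $t$ then form a unit intensity Poisson process $\scrX_t$ on $\mathbb T_t\times(0,1)$. A vertex $\x=(x,t_x)$ connects to an older $\y=(y,t_y)$ with probability
\[
    \rho_\delta\big(\beta^{-1}t_y^\gamma t_x^{1-\gamma}\operatorname{d}_t(x,y)^d\big),
\]
where $\operatorname{d}_t$ is the metric of $\mathbb T_t$. A reverse arc then appears with probability $(t_y/t_x)^\Gamma$. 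These are exactly the rules~\eqref{eq:firstConnect} and~\eqref{eq:backConnect}, with the torus metric in place of the Euclidean one.

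\textbf{Step 2: Coupling through marks.} Following Section~\ref{sec:edge-marked}, we realise all randomness on a single marked Poisson process on $\R^d\times(0,1)$. Each pair of points carries i.i.d.\ uniform marks $(V_{\x\y},W_{\x\y})$: an arc $\x\to\y$ is present iff $V_{\x\y}\le\rho(\cdot)$, and the reverse arc iff additionally $W_{\x\y}\le\pi(\cdot)$. To avoid pair marks, one can equivalently attach to each point an independent Poisson "edge mark" process. Restricting to the box $[-t^{1/d}/2,t^{1/d}/2)^d$ with periodic identification yields a copy of the rescaled $\D_t$. Using the whole space yields $\D$. The resulting marked process is stationary and ergodic (indeed mixing) under spatial shifts.

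\textbf{Step 3: Truncation and stabilisation.} This is the main obstacle. $H$ depends on a bounded undirected graph neighbourhood, but that neighbourhood is not spatially bounded: long-range arcs and old vertices with huge indegree mean the radius of stabilisation has heavy tails. I would first truncate. Fix $\varepsilon>0$ and $K,R$, and define $H_{K,R}(\x,\cdot)=\big(H(\x,\cdot)\wedge K\big)\1\{E_R(\x)\}$, where $E_R(\x)$ is the event that the $k$-neighbourhood of $\x$ lies within Euclidean distance $R$ of $x$ and contains no vertex with birth time below $R^{-1}$.

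By local finiteness (Theorem~\ref{thm:neighbourhood} together with the explicit Poisson intensities $\lambda^{\text{in}},\lambda^{\text{out}}$), under the Palm measure $\P_o$ we have $\P_o(E_R(\o)^c)\to0$ as $R\to\infty$. I would show the analogous bound uniformly in $t$ for a uniform vertex of the rescaled $\D_t$, by comparing intensity measures: torus distances only decrease relative to Euclidean ones, but the discrepancy only affects pairs at distance of order $t^{1/d}$.

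The truncated functional is bounded and depends only on the marked configuration within radius $R$. Two consequences follow. First, for fixed $R$ and large $t$, it coincides for vertices away from the boundary of the box with the corresponding quantity in the whole-space process. Second, the ergodic theorem for stationary marked point processes (Campbell/Palm calculus, e.g.\ \cite[Thm.~9.1]{LastPenrose2017} together with a spatial ergodic theorem) gives
\[
    t^{-1}\sum_{\x\in\scrX\cap B_t}H_{K,R}(\x,\D)\to\E_o[H_{K,R}(\o,\D_o)]
\]
almost surely and in $L^1$. Boundary vertices contribute $o(t)$. Moreover, $\sharp\scrV(\D_t)/t\to1$ by the Poisson law of large numbers.

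\textbf{Step 4: Removing the truncation.} We bound the error from truncation in three parts. The error from $E_R^c$ is controlled in $L^1$ by the uniform integrability assumption combined with $\sup_t\P(E_R(\text{uniform vertex})^c)\to0$. This uses that uniform integrability gives $\E[\,\cdot\,\1_A]$ small uniformly whenever $\P(A)$ is small; here $A$ is applied to an empirical average, handled by splitting into $\{H\le K\}$ and $\{H>K\}$. The cap at $K$ is removed by uniform integrability on one side and by monotone convergence for $\E_o[H(\o,\D_o)]$ on the other. Letting $t\to\infty$, then $R\to\infty$ and $K\to\infty$, yields~\eqref{eq:DARCMasLL} in $L^1$, and hence in probability.
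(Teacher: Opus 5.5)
Your argument is correct in outline, but it takes a different route from the paper. Both start the same way. The rescaling identifies the law of $\D_t$ with that of the torus graph $\D^t$, one uses $\sharp\scrV(\D_t)/t\to1$, and one reduces to bounded functionals by capping.

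The paper then applies Proposition~\ref{prop:LLN}, which never leaves the torus:
\begin{itemize}
\item the mean is computed by Mecke's formula and torus translation invariance;
\item convergence is needed only at the Palm root, which is Lemma~\ref{lem:equalNeighbourhoods};
\item concentration comes from a two-point Palm second-moment computation. Two independently placed roots are far apart with high probability, and their functionals, restricted to windows that grow with $t$ but stay small compared with their distance, become independent. This gives $L^2$ convergence for bounded functionals.
\end{itemize}

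You instead compare the torus average with the box average of the infinite graph $\D$, and get concentration from the spatial ergodic theorem. This avoids the decorrelation computation and gives almost sure convergence of the box averages of $\D$. It costs you two things the paper does not need. First, ergodicity of the edge-marked process, which requires the factor-of-i.i.d.\ construction of edge marks you allude to. Second, control of the torus/whole-space discrepancy uniformly over all interior vertices rather than only at the root; that is what your truncation $E_R$ is for.

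That uniform control is where Step~3 is inaccurate. $H_{K,R}$ does not depend only on the configuration in $B_R(x)$. The indicator $\1\{E_R(\x)\}$ certifies that no arc leaves the ball, and this involves all points outside it. So $H_{K,R}(\x,\D^t)=H_{K,R}(\x,\D)$ is not guaranteed even for $\x$ far from the boundary: a long arc from a vertex of $B_R(x)$ to a point outside the box (present only in $\D$), or a wrap-around arc (present only in $\D^t$), can destroy $E_R$ in one graph but not the other. The repair is routine; either option works:
\begin{itemize}
\item Evaluate $H\wedge K$ on the subgraph induced by the vertices in $B_R(x)\times[R^{-1},1)$. This is genuinely local and agrees with $H\wedge K$ on $E_R$, so the truncation error is still at most $K\1\{E_R^c\}$.
\item Bound the probability of a discrepancy by the expected number of arcs of length at least $r-R$ incident to vertices of $B_R(x)$ with birth time at least $R^{-1}$, where $r$ is the distance from $x$ to the boundary. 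This tends to $0$ as $r\to\infty$ uniformly in $t$, because the connection kernel integrated over the torus is dominated by its integral over $\R^d$.
\end{itemize}

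Finally, in Step~4 the term $t^{-1}\sum_{\x}H(\x,\D^t)\1\{H(\x,\D^t)>K\}$ is not the empirical average multiplied by the indicator of a rare event. So uniform integrability of the averages does not control it. By Mecke's formula and torus translation invariance its expectation is $\E_o[H(\o,\D^t_o)\1\{H(\o,\D^t_o)>K\}]$. What you actually need is uniform integrability of the root functional under the Palm measure. That is exactly hypothesis~(ii) of Proposition~\ref{prop:LLN}, which the paper likewise reads off from the assumption of the theorem. This point is therefore shared with the paper's proof rather than a defect of your approach.
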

    \begin{remark} ~\
    	\begin{enumerate}[(i)]
    		\item 
    			It is important to note that \(H\) solely depends on the digraph structure but does not on the vertices' embedding or birth times. In fact, the law of large numbers, Proposition~\ref{prop:LLN} below, will allow for additional dependences on these things.
    		\item
    			Functionals in this settings are typically not allowed to depend on the labelling of the digraph and must remain unchanged under isomorphisms. More precisely, they act on the residue class of the digraph modulo graph isomorphism. However, as the underlying spatial embedding provides a unique labelling of the vertices via their location, this problem does not occur in our setting.  
    	\end{enumerate}	
    \end{remark}
    
    \subsection{Empirical degree distribution} 
    As an immediate consequence of the local limit theorem and the results of Section~\ref{sec:degrees}, we obtain bounds for the empirical in- and outdegree distributions.
    
    \begin{theorem}
    	Consider the family of digraphs \((\D_t\colon t>0)\).
    	\begin{enumerate}[(i)]
    		\item 
    			As \(t\to\infty\), we have in probability, for each \(k\in\N_0\),
    			\[
    				\frac{\sharp\{\x\in\scrV(\D_t)\colon \sharp\scrN_t^{\text{in}}(\x)=k\}}{\sharp\scrV(\D_t)} \longrightarrow \P_o\{\sharp\scrN^{\text{in}}=k\} \asymp k^{-1-\nicefrac{1}{\gamma}},
    			\]
    			where \(\sharp\scrN_t^{\text{in}}(\x)=k\) denotes the indegree of vertex \(\x\) in \(\D_t\).
    		\item	
    			Similarly, as \(t\to\infty\), we have for the empirical outdegree, in probability for each \(k\in\N_0\),
    			\[
    				\frac{\sharp\{\x\in\D_t\colon \sharp\scrN_t^{\text{out}}(\x)=k\}}{\sharp\scrV(\D_t)} \longrightarrow \P_o\{\sharp\scrN^{\text{out}}=k\}
                    \asymp
    				\begin{cases}
    					e^{-\beta/(\Gamma-\gamma)} \tfrac{(\beta/(\Gamma-\gamma))^k}{k!}, & \text{ if } \Gamma>\gamma, 
    					\\
    					2^{-k-1}, & \text{ if } \Gamma=\gamma,
    					\\
    					k^{-1-1/(\gamma-\Gamma)}, & \text{ if } \Gamma<\gamma.
    				\end{cases}
    			\]
    	\end{enumerate}
    \end{theorem}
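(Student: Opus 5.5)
The plan is to apply Theorem~\ref{thm:DARCMasLL} with indicator functionals and then read off the limiting probabilities from Theorem~\ref{thm:neighbourhood}.

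For fixed \(k\in\N_0\), set
\[
H^{\text{in}}_k(\x,D):=\1\{\sharp\scrN^{\text{in}}(\x)=k \text{ in } D\}, \qquad H^{\text{out}}_k(\x,D):=\1\{\sharp\scrN^{\text{out}}(\x)=k \text{ in } D\}.
\]
Each of these functionals has three properties:
\begin{enumerate}[(1)]
\item it is non-negative;
\item it depends only on the undirected graph neighbourhood of radius one around the root, since in- and out-neighbours are in particular undirected neighbours;
\item it is invariant under isomorphisms of rooted digraphs, so the remark after Theorem~\ref{thm:DARCMasLL} applies.
\end{enumerate}
Because \(0\le H\le 1\), the empirical averages \(\frac{1}{\sharp\scrV(\D_t)}\sum_{\x\in\D_t}H(\x,\D_t)\) are bounded by \(1\). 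The family is therefore trivially uniformly integrable. One small point must be settled: at times when \(\D_t\) is empty the average is undefined. I would set it to \(0\) there. This is harmless, because \(\sharp\scrV(\D_t)\) is a rate-one Poisson process and tends to infinity almost surely.

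Theorem~\ref{thm:DARCMasLL} then gives, in probability and in \(L^1\),
\[
\frac{\sharp\{\x\in\scrV(\D_t)\colon \sharp\scrN_t^{\text{in}}(\x)=k\}}{\sharp\scrV(\D_t)}\longrightarrow \E_o[H^{\text{in}}_k(\0,\D_o)]=\P_o\{\sharp\scrN^{\text{in}}=k\},
\]
and the analogous statement for the outdegree.

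The asymptotic descriptions of the limits are then exactly Theorem~\ref{thm:neighbourhood}:
\begin{itemize}
\item part (i) gives \(k^{-1-1/\gamma}\) for the indegree;
\item part (ii)(a) gives Poisson behaviour when \(\Gamma>\gamma\);
\item part (ii)(b) gives \(2^{-k-1}\) when \(\Gamma=\gamma\);
\item part (ii)(c) gives \(k^{-1-1/(\gamma-\Gamma)}\) when \(\Gamma<\gamma\).
\end{itemize}
In the case \(\Gamma>\gamma\), the displayed Poisson weights with parameter \(\beta/(\Gamma-\gamma)\) should be read up to the constants absorbed in \(\asymp\), consistent with the parameter \(\lambda\asymp\frac{\omega_d\delta\beta}{\delta-1}(\frac{1}{\Gamma-\gamma}+\frac1\gamma)\) from Theorem~\ref{thm:neighbourhood}(ii)(a). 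If a sharper identification of the parameter is wanted, I would recompute \(\lambda^{\text{out}}_{\0}\) directly from the intensity measure stated before Theorem~\ref{thm:neighbourhood}, as follows:
\begin{itemize}
\item Integrate over \(y\) using \(\int\rho(\beta^{-1}a|y|^d)\d y=\beta a^{-1}\omega_d\frac{\delta}{\delta-1}\).
\item The older out-neighbours contribute \(\int_0^u u^{\gamma-1}s^{-\gamma}\d s\), which is constant in \(u\).
\item The reciprocated younger ones contribute \(\int_u^1 (u/s)^\Gamma u^{-\gamma}s^{\gamma-1}\d s\), which is bounded when \(\Gamma>\gamma\).
\end{itemize}

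There is essentially no obstacle beyond what is already in Theorem~\ref{thm:DARCMasLL}. The only point needing care is that \(H\) must be a genuine bounded-neighbourhood functional of the rooted digraph. This holds, since degree counts are determined by the radius-one ball.
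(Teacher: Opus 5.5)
Your proposal is correct and is exactly the paper's argument: the paper records this theorem as an immediate consequence of Theorem~\ref{thm:DARCMasLL}, applied to the bounded indicator functionals of the root's in- and outdegree, together with the asymptotics of Theorem~\ref{thm:neighbourhood}. Your caveat about the Poisson parameter for \(\Gamma>\gamma\) concerns the looseness of the statement itself, not a gap in your proof: the conditional parameter depends on the root's birth time, so the limit is a mixed Poisson law. Its weights match the displayed ones only in a rough sense, not literally up to constants uniformly in \(k\).
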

    
    Another consequence of Theorem~\ref{thm:DARCMasLL} is the sparsity of the digraph family \((\D_t)_t\), that is, the number of arcs is proportional to the number of vertices. 
    
    \begin{theorem} \label{thm:sparse}
    	The family \((\D_t\colon t>0)\) is sparse, i.e.
    	\[
    		\frac{\sharp\{\text{arcs in }\D_t\}}{\sharp\scrV(\D_t)}\longrightarrow \E_o\big[\sharp\scrN^{\text{out}}\big]\in(0,\infty),
    	\]
    	in probability, as \(t\to\infty\).
    \end{theorem}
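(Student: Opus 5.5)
The plan is to apply Theorem~\ref{thm:DARCMasLL} to the functional $H(\x,D)=\sharp\scrN^{\text{out}}(\x)$, the outdegree of the root. This $H$ is non-negative and depends only on the graph neighbourhood of radius one of the root. Since every arc has exactly one tail, we have the exact identity
\[
\frac{\sharp\{\text{arcs in }\D_t\}}{\sharp\scrV(\D_t)}=\frac{1}{\sharp\scrV(\D_t)}\sum_{\x\in\D_t}\sharp\scrN^{\text{out}}_t(\x).
\]
The convergence in probability to $\E_o[\sharp\scrN^{\text{out}}]$ then follows from \eqref{eq:DARCMasLL}, provided uniform integrability holds. Positivity of the limit is immediate, because $\P_o(\sharp\scrN^{\text{out}}\ge1)>0$ by Theorem~\ref{thm:neighbourhood}.

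\textbf{Finiteness of the limit.} I would compute $\E_o[\sharp\scrN^{\text{out}}]=\int\lambda^{\text{out}}_{\o}$ directly. The arcs to older vertices contribute $\int_0^1\int_0^u \beta\,\omega_d\frac{\delta}{\delta-1}u^{\gamma-1}s^{-\gamma}\d s\d u<\infty$. The reciprocal arcs contribute
\[
\int_0^1\int_u^1 \beta\,\omega_d\tfrac{\delta}{\delta-1}(u/s)^\Gamma u^{-\gamma}s^{\gamma-1}\d s\d u\le C\int_0^1 u^{-\gamma}\d u<\infty,
\]
since $\gamma<1$. Alternatively, the total number of reciprocal arcs is at most the number of forward arcs, so one can simply bound $\E_o[\sharp\scrN^{\text{out}}]\le 2\,\E_o[\text{forward outdegree}]+\E_o[\text{forward indegree}]$. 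Both terms are finite by the degree computation, or by mass transport, since the two forward means coincide.

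\textbf{Uniform integrability (the main obstacle).} I would bound the arc count by $A_t\le 2F_t$, where $F_t$ is the number of forward (young-to-old) arcs. The forward arcs are conditionally independent given the vertex process, so $F_t$ is easy to control. Because $\sharp\scrV(\D_t)\sim$ Poisson$(t)$ concentrates, it suffices to show that $\E[(F_t/t)^2]$ is bounded, or alternatively that $F_t/t$ converges in $L^1$; Scheffé-type arguments then give uniform integrability. To bound the first moment, note that a vertex born at time $r$ sends, in expectation, $\int_0^r\beta\frac{\delta\omega_d}{\delta-1}r^{\gamma-1}s^{-\gamma}\d s=O(1)$ arcs (up to torus truncation, which only decreases the count). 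Hence $\E F_t=O(t)$. The second moment reduces to a Mecke-formula computation: the variance of $F_t$ given the vertices is at most $\E F_t$, and the variance stemming from the Poisson vertex process is $O(t)$ by the same integrals. This gives $\E[F_t^2]=O(t^2)$, and we need the ratio $F_t/\sharp\scrV(\D_t)$ rather than $F_t/t$. To handle the denominator, I would restrict to the event $\sharp\scrV(\D_t)\ge t/2$, whose complement has exponentially small probability, while $F_t\le\sharp\scrV(\D_t)^2$ holds deterministically. With these bounds, $L^2$-boundedness and hence uniform integrability follow, and Theorem~\ref{thm:DARCMasLL} concludes the proof.
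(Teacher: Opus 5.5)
Your route is valid but differs from the paper's. The paper works with the rescaled graphs $\D^t$, which have the law of $\D_t$. It applies Proposition~\ref{prop:LLN} directly to $F_t(\x,\xi^t)$, the outdegree of $\x$ in $\D^t_{\x}$. The only thing to verify is condition (ii) there: uniform integrability of the root's outdegree under $\P_o$. This check is local and immediate. The torus distance to the origin equals $|y|$ on the box, so the root's out-neighbourhood in $\D^t_o$ is its out-neighbourhood in $\D_o$ restricted to the box. By Theorem~\ref{thm:neighbourhood}, the outdegree in $\D_o$ has a finite $p$-th moment for some $p>1$ (any $p<1/(\gamma-\Gamma)$ if $\Gamma<\gamma$).

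You instead verify the literal hypothesis of Theorem~\ref{thm:DARCMasLL}, uniform integrability of the empirical averages, via a global second-moment bound on the forward-arc count $F_t$. This is heavier but legitimate, and it gives concentration of $F_t/t$ as a by-product. Note, though, that the proof of Theorem~\ref{thm:DARCMasLL} passes this hypothesis on to condition (ii) of Proposition~\ref{prop:LLN}, which concerns the root under $\P_o$, not the averages. Adding the one-line Palm check above makes your argument independent of that translation.

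One claim in your second-moment step is false for $\gamma\ge 1/2$. By the Mecke formula, the variance of $\E[F_t\mid\text{vertices}]$ equals $t\int_0^1 (f_1(r)+f_2(r))^2\d r+O(t)$. Here $f_1(r)=O(1)$ and $f_2(r)$ are the expected forward out- and indegree of a vertex with rescaled birth time $r$. The indegree term is not bounded: for small $r$ one has $f_2(r)\asymp\min(t,r^{-\gamma})$, where the cap comes from the torus volume. Hence this variance is of order $t$ only for $\gamma<1/2$. It is of order $t\log t$ for $\gamma=1/2$ and of order $t^{3-1/\gamma}$ for $\gamma>1/2$. Without the cap the integral would even diverge. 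It is exactly the heavy-tailed indegree of old vertices that breaks your ``$O(t)$ by the same integrals''.

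Since $\gamma<1$ gives $3-1/\gamma<2$, you still get $\E[F_t^2]=O(t^2)$, so the rest of your argument goes through. That is: restrict to $\sharp\scrV(\D_t)\ge t/2$, use $F_t\le\sharp\scrV(\D_t)^2$ on the complement, deduce $L^2$-boundedness, and apply Theorem~\ref{thm:DARCMasLL}. Your finiteness and positivity computations for $\E_o[\sharp\scrN^{\text{out}}]$ are correct.
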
    
    
    %%%%%%%%%%%%%%%%%%%%%%%%%
    %%%% CLustering %%%%%%%%%
    %%%%%%%%%%%%%%%%%%%%%%%%%
    \section{Clustering in the DARCM and the directed age-based PA model} \label{sec:cluster}
    Evidence suggests that vertices in social media networks tend to form rather dense local subgraphs, often referred to as \emph{filter bubbles}. These bubbles are typically either centred around a common group of friends or based on similar interests. A tool often used to measure the tendency towards these filter bubbles within the networks are \emph{clustering coefficients}. In this section, we discuss two such coefficients, one based on friendship and one based on shared interests. Essentially one measures how likely two vertices with a common friend are friends themselves, and how likely it is that two users that have one common interest share another. Here, `friendship' denotes a reciprocal connection and an `interest' is an outgoing arc, see Figure~\ref{fig:clustering}. Let us start with the friendship clustering as it is closely related to the standard triangle count in undirected networks.
    
    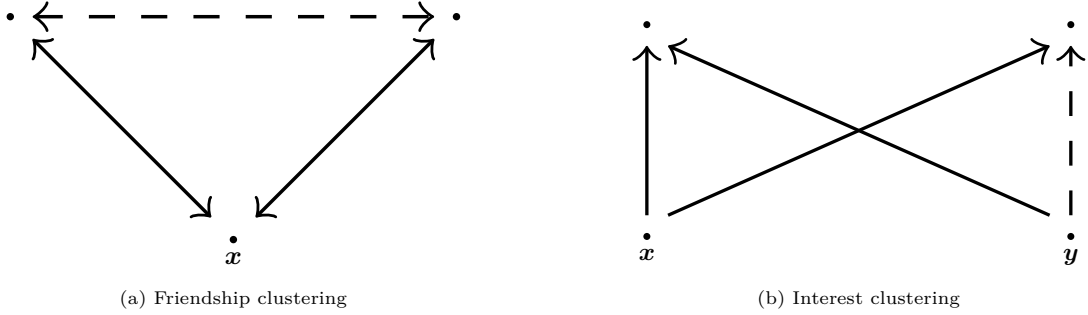
\begin{figure}
    	\begin{center}
    		\begin{subfigure}[t]{0.45\textwidth}
    			\resizebox{\textwidth}{!}{
					\begin{tikzpicture}[scale=0.3, every node/.style={scale=0.3}]
						\draw (-3 , 3) node[circle, fill = black, scale=0.3] (y) {};
						\draw (0 , 0) node[circle, fill = black, scale=0.3, label=below:{$\x$}] (x) {};
						\draw (3 , 3) node[circle, fill = black, scale=0.3] (z) {};
						\draw[<->, thin] (-0.3, 0.3) -- (-2.7, 2.7);
						\draw[<->, thin] (0.3, 0.3) -- (2.7, 2.7);
						\draw[<->, thin, dashed] (-2.7, 3) -- (2.7, 3);
					\end{tikzpicture}
				}
				\caption{Friendship clustering}
				\label{fig:friend}
    		\end{subfigure}
    		\hfill
    		\begin{subfigure}[t]{0.45\textwidth}
    			\resizebox{\textwidth}{!}{
					\begin{tikzpicture}[scale=0.3, every node/.style={scale=0.3}]
						\draw (-3 , 3) node[circle, fill = black, scale=0.3] (v) {};
						\draw (-3 , 0) node[circle, fill = black, scale=0.3, label=below:{$\x$}] (x) {};
						\draw (3 , 0) node[circle, fill = black, scale=0.3, label=below:{$\y$}] (y) {};
						\draw (3 , 3) node[circle, fill = black, scale=0.3] (z) {};
						\draw [->, thin] (-3, 0.3) -- (-3, 2.7);
						\draw [->, thin] (-2.7, 0.3) -- (2.7, 2.7);
						\draw [->, thin] (2.7, 0.3) -- (-2.7, 2.7);
						\draw [->, thin, dashed] (3, 0.3) -- (3, 2.7);
					\end{tikzpicture}
				}
				\caption{Interest clustering}
				\label{fig:interest}
    		\end{subfigure}
    	\end{center}
    	\caption{Depiction of the two clustering metrics. The labeled vertices \(\x\) and \(\y\) refer to the local versions.}
    	\label{fig:clustering}
    \end{figure}
    
    \paragraph{Friendship clustering.}
    Let us call two given vertices \(\x\) and \(\y\) \emph{friends} if \(\x\leftrightarrow\y\). In a nutshell, friendship clustering measures how likely nodes with a common `friend' are friends themselves. This can be done from a local or a global perspective. In the local perspective, two vertices are sampled from the friend neighbourhood of a typical vertex and one asks whether these vertices are friends themselves. In the global perspective one considers the probability that a uniformly chosen open triangle, formed by bidirectional arcs, is closed, see Figure~\ref{fig:friend}. Since this coefficient is build on triangles, it is closely related to the standard clustering coefficient in undirected graphs and our case is essentially a straightforward adaption of the setting in~\cite{GGLM2019}. 
    
    Let us start with the local viewpoint and let \({\scrV_t^{(2)}}\) the set of all vertices having at least two friends in \(\D_t\). For \(\x\in\scrV_t^{(2)}\), we define the \emph{local friend clustering coefficient} of \(\x\) in \(\D_t\) as 
\[
    c^\textup{fc}(\x, \D_t) = \frac{\sum\limits_{\substack{\y,\z\in\D_t: \, t_y>t_z}}\1_{\{\x\leftrightarrow\y\}}\1_{\{\x\leftrightarrow\z\}}\1_{\{\y\leftrightarrow\z\}}}{\binom{\sharp (\mathscr{N}^\text{out}(\x)\cap\mathscr{N}^\text{in}(\x))}{2}}.
\]
If \(\x\not\in\scrV_t^{(2)}\), we simply set its friend clustering coefficient to be zero.
To make use of the local limit structure and make certain that the considered vertices are sampled from a typical friend neighbourhood, typically the \emph{average friend clustering coefficient} is considered, which is defined as
\[
	c^\textup{fc}_\textup{av}(\D_t)  = \frac{1}{\scrV_t^{(2)}} \sum_{\x\in\D_t}c^\textup{fc}(\x,\D_t).
\]

\begin{theorem}[Average friend clustering] \label{thm:locFriendClust} 
    For all \(\beta>0,\gamma\in(0,1),\delta>1,\) and \(\Gamma\geq 0\), we have, in probability as \(t\to\infty\),
    \[
    	c^\textup{fc}_\textup{av}(\D_t) \longrightarrow \E_o c^\textup{fc}(\o,\D_o),
    \]
    where
    \begin{equation*}
        \begin{aligned}
            \E_o c^\textup{fc}(\0, \D_o) & = %\int_{0}^1\ \P(\Y^{(u)}\leftrightarrow\X^{(u)})\P_{(0,u)}\Big(\bigcup_{k\geq 2}F_{(0,u)}(k)\Big)\d u >0,
            \int_0^1 \P(\Y^{(u)}\leftrightarrow\X^{(u)}) \mu^f_2(\d u),
        \end{aligned}
    \end{equation*}
    %and \(F_{(o,u}(k)\) is the event that the root \((o,u)\) has \(k\) friends while 
    and \(\X^{(u)}\) and \(\Y^{(u)}\) are two independent random variables distributed according to the normalised measure \(\lambda_u^f/\lambda_u^f(\R^d\times(0,1))\) with
        \[
       \lambda_{u}^\textup{f}(\d(x,s)) =  \Big((\tfrac{s}{u})^\Gamma\rho\big(\beta^{-1} s^{\gamma}u^{1-\gamma}|x|^d\big)\1_{\{s<u\}}+(\tfrac{u}{s})^\Gamma\rho\big(\beta^{-1}s^{1-\gamma}u^\gamma |x|^d\big)\1_{\{s\geq u\}}\Big) \d s \, \d x,
    \]
    and \(\mu^f_2\) is the distribution on \((0,1)\) with density proportional to 
    \[
        1-e^{-\lambda_u^f(\R^d\times(0,1))}-\lambda_u^f(\R^d\times(0,1))e^{-\lambda_u^f(\R^d\times(0,1))}.
    \]
\end{theorem}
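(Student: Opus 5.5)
The plan is to reduce the convergence to Theorem~\ref{thm:DARCMasLL}, applied to two functionals, and then to compute the limit by Palm calculus. Write
\[
c^\textup{fc}_\textup{av}(\D_t)=\frac{\sharp\scrV(\D_t)}{\sharp\scrV_t^{(2)}}\cdot\frac{1}{\sharp\scrV(\D_t)}\sum_{\x\in\D_t}c^\textup{fc}(\x,\D_t).
\]
Both $H_1(\x,D)=c^\textup{fc}(\x,D)$ and $H_2(\x,D)=\1\{\x \text{ has at least two friends in } D\}$ are non-negative. They depend only on the undirected graph neighbourhood of radius one of the root: the friends of $\x$ and the arcs between them. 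Both are bounded by $1$, so uniform integrability of the empirical averages is trivial.

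Theorem~\ref{thm:DARCMasLL} therefore gives, in probability,
\[
\frac{1}{\sharp\scrV(\D_t)}\sum_{\x} H_1 \to \E_o c^\textup{fc}(\o,\D_o)
\quad\text{and}\quad
\frac{\sharp\scrV_t^{(2)}}{\sharp\scrV(\D_t)}\to \P_o(\o\text{ has}\ge 2\text{ friends}).
\]
The second limit is positive, since the friend count of a root with birth time $u$ is Poisson with parameter $\lambda^f_u(\R^d\times(0,1))\in(0,\infty)$. The quotient then converges to
\[
\frac{\E_o c^\textup{fc}(\o,\D_o)}{\P_o(\ge 2 \text{ friends})}.
\]
I read the theorem's normalisation as the conditional expectation given at least two friends. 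The limit as written, integrated against the probability measure $\mu^f_2$, agrees with this, since normalising the density turns it into the conditional version.

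To identify the limit, condition on $U_o=u$. The friend set $\scrN^{\text{out}}\cap\scrN^{\text{in}}$ of $\o=(o,u)$ is an independent thinning of the fan-out. An older vertex $(y,s)$ is a friend with probability $\rho(\beta^{-1}u^{1-\gamma}s^\gamma|y|^d)(s/u)^\Gamma$, and a younger one with probability $\rho(\beta^{-1}u^\gamma s^{1-\gamma}|y|^d)(u/s)^\Gamma$. So, as in the computation of $\lambda^\text{in}_\x,\lambda^\text{out}_\x$, the friend set is a Poisson process with intensity $\lambda^f_u$.

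Conditionally on having $k\ge 2$ points, these friends are i.i.d.\ with law $\lambda^f_u/\lambda^f_u(\R^d\times(0,1))$. Given the vertex set and the friendships with the root, the arc indicators among distinct pairs of friends are independent of the root's arcs. Each pair's friendship event therefore has probability $\P(\X^{(u)}\leftrightarrow\Y^{(u)})$, the probability that two i.i.d.\ points drawn from the normalised measure are friends with each other. Linearity over the $\binom k2$ pairs then gives a conditional mean of $c^\textup{fc}$ equal to $\P(\X^{(u)}\leftrightarrow\Y^{(u)})$ for every $k\ge2$. Integrating over $u$ with weight
\[
\P(\text{Poi}(\lambda^f_u(\R^d\times(0,1)))\ge 2)=1-e^{-\lambda}-\lambda e^{-\lambda},\qquad \lambda=\lambda^f_u(\R^d\times(0,1)),
\]
yields the stated formula after normalisation.

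The main obstacle is the conditional independence step, namely that the arc between two friends is independent of their friendship arcs with the root given the Poisson configuration. In the construction all arcs in (A) and reverse arcs in (B) are independent given $\scrX$ for distinct unordered pairs, so this holds. I would justify it via the marked-point-process encoding of Section~\ref{sec:edge-marked} together with the Mecke formula. One also needs $\lambda^f_u(\R^d\times(0,1))<\infty$ for every $u$, which follows from the integrability of $\rho$ and $\gamma<1$ exactly as in Theorem~\ref{thm:neighbourhood}.
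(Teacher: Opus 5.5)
Your proposal is correct and follows the paper's route. The paper proves this only by reference to the average-clustering argument of \cite[Thm.~5.1]{GGLM2019}, with edges replaced by double arcs, and that argument is exactly yours: apply the local limit theorem to the two bounded functionals (the local coefficient and the indicator of having at least two friends), then compute the limit by noting that, given the root's birth time $u$, the friends form a Poisson process with intensity $\lambda^f_u$ whose mutual arcs are conditionally independent of the root's arcs. Your reading of the limit as $\E_o[c^{\textup{fc}}(\o,\D_o)\mid \o \text{ has at least two friends}]$ is also the right one, because that conditional expectation is what the $\mu^f_2$-integral represents, despite the label $\E_o c^{\textup{fc}}(\o,\D_o)$ in the statement.
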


The global friend clustering coefficient is now defined as the proportion of open bidirectional triangles (i.e.\ one double-arc may be missing) that are closed (i.e.\ in fact all three double-arcs are present). Thus,
\[
	c^\textup{fc}_\text{glob}(\D_t) = 3\frac{\sum_{\x,\y,\z\in\cD_t} \1_{\{\x\leftrightarrow\y\}}\1_{\{\x\leftrightarrow\z\}}\1_{\{\y\leftrightarrow\z\}}}{\sum_{\x,\y,\z\in\D_t} \1_{\{\x\leftrightarrow\y\}}\1_{\{\x\leftrightarrow\z\}}},
\]
where the factor \(3\) takes into account that each closed triangle creates three open triangles present in the denominator. 

\begin{theorem}[Global friend clustering] \label{thm:globFriendClust}
	 For all \(\beta>0,\gamma\in(0,1),\delta>1,\) and \(\Gamma\geq 0\), there exists a constant \(c\geq 0\) such that, in probability as \(t\to\infty\),
	 \[
	 	c^\textup{fc}_\textup{glob}(\D_t) \longrightarrow c,
	 \]
	where \(c>0\) if and only if \(\gamma-\Gamma<1/2\). 
\end{theorem}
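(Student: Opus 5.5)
We write $c^\textup{fc}_\textup{glob}(\D_t)$ as a ratio of two empirical averages and apply Theorem~\ref{thm:DARCMasLL} (or the law of large numbers, Proposition~\ref{prop:LLN}) to numerator and denominator separately. Define the local functionals
\[
H_\triangle(\x,D)=\sum_{\y,\z}\1_{\{\x\leftrightarrow\y\}}\1_{\{\x\leftrightarrow\z\}}\1_{\{\y\leftrightarrow\z\}},\qquad
H_\wedge(\x,D)=\sum_{\y,\z}\1_{\{\x\leftrightarrow\y\}}\1_{\{\x\leftrightarrow\z\}}.
\]
Both depend only on the graph-distance-one neighbourhood of $\x$. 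Dividing numerator and denominator by $\sharp\scrV(\D_t)$ gives
\[
c^\textup{fc}_\textup{glob}(\D_t)=3\,\frac{\sharp\scrV(\D_t)^{-1}\sum_{\x}H_\triangle(\x,\D_t)}{\sharp\scrV(\D_t)^{-1}\sum_{\x}H_\wedge(\x,\D_t)}.
\]
The plan splits into two regimes, according to whether $\E_o[H_\wedge(\0,\D_o)]$ is finite.

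Note that $H_\wedge(\0,\D_o)=F(F-1)$, where $F$ is the number of friends of the root. By the Poisson description of neighbourhoods, conditionally on $U_o=u$ the variable $F$ is Poisson with mean $\lambda^f_u(\R^d\times(0,1))$. A direct computation splits this mean into two parts. Older friends contribute $\int_0^u (s/u)^\Gamma (s^\gamma u^{1-\gamma})^{-1}\d s\asymp u^{-\gamma}$ times constants. Younger friends contribute $\int_u^1 (u/s)^\Gamma (u^\gamma s^{1-\gamma})^{-1}\d s\asymp u^{-(\gamma-\Gamma)\vee 0}$, with a logarithm if $\gamma=\Gamma$. Since $u^{-\gamma}$ and $u^{-(\gamma-\Gamma)}$ are both involved, I need to be careful here. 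Restricting to older friends, the friend count is the count of older out-neighbours, which is bounded. The dominant term is therefore $u^{-(\gamma-\Gamma)}$, and indeed $\lambda^f_u\asymp 1+u^{-(\gamma-\Gamma)}$ up to logs. Consequently
\[
\E_o[F(F-1)]=\int_0^1 (\lambda_u^f)^2\d u<\infty \iff \gamma-\Gamma<1/2.
\]

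\textbf{Case $\gamma-\Gamma<1/2$.} First check uniform integrability of both empirical averages. I would do this through second moments: using the Mecke formula on the torus, the variance of $\sharp\scrV(\D_t)^{-1}\sum_\x H_\wedge$ is bounded uniformly in $t$. Alternatively, use the truncation argument of~\cite{GGLM2019}, which cuts off vertices with birth time below $\epsilon t$ and controls their contribution by $\int_0^\epsilon(\lambda_u^f)^2\d u\to 0$. Theorem~\ref{thm:DARCMasLL} then gives convergence to $\E_o H_\triangle$ and $\E_o H_\wedge$. Since $\E_oH_\wedge\in(0,\infty)$, the ratio converges to $c=3\E_oH_\triangle/\E_oH_\wedge$. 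Positivity of $c$ follows because the triangle density is positive: the root, together with two friends that are spatially close and of comparable age, forms a closed bidirectional triangle with positive probability, since $\rho$ and $\pi$ are bounded below on a compact set.

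\textbf{Case $\gamma-\Gamma\ge 1/2$.} Here the denominator diverges. I apply the LLN to the truncated functionals $H_\wedge\wedge M$ and let $M\to\infty$, so that the denominator divided by $\sharp\scrV(\D_t)$ tends to $\infty$ in probability. For the numerator I show that $\E_oH_\triangle<\infty$ and that it satisfies uniform integrability. The reason triangles are rarer is that two young friends $\y,\z$ of an old vertex $\x$ are typically far apart, at distance of order $(t_x^{\gamma}t_y^{1-\gamma}/\beta)^{1/d}$, so they are unlikely to be connected. The expected triangle count at $\x$ is bounded by integrating $\lambda^f$-intensities for $\y$ and $\z$ against $\rho$ of their mutual distance. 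This gives a bound of order $u^{-(\gamma-\Gamma)}$ rather than its square, up to logs; the computation follows the undirected case in~\cite{GGLM2019}, multiplied by the $\pi$ factors. Hence the ratio tends to $0$, i.e.\ $c=0$.

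The main obstacle is the borderline $\gamma-\Gamma=1/2$ and the uniform integrability and lower-tail control of the divergent denominator on the finite torus. There the empirical sum of $F(F-1)$ grows only like $\sharp\scrV(\D_t)\log t$, and one must make sure that finite-$t$ effects (the cap at birth time $\approx 1/t$) do not make the numerator comparably large. I would handle this by an explicit first-moment computation of both sums in $\D_t$ via the Mecke formula, rather than through the local limit alone.
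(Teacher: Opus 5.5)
Your proof is correct and has the same skeleton as the paper's proof. Both apply the law of large numbers to numerator and denominator separately. In both, the threshold $\gamma-\Gamma=1/2$ comes from finiteness of the second moment of the friend count. And both show that the denominator blows up when $\gamma-\Gamma\ge 1/2$ by applying the LLN to the truncation $H_\wedge\wedge M$ and letting $M\to\infty$.

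\textbf{Where the routes differ.} The difference is how the numerator is localised.
\begin{itemize}
\item The paper assigns each closed triangle to its \emph{youngest} vertex. Then both other vertices are older, so the count is at most the square of the number of forward arcs sent at birth. That number is Poisson with a bounded mean that does not depend on the birth time. So uniform integrability is immediate in every regime, and the numerator always has a finite limit with no further computation.
\item Your vertex-centred $H_\triangle$ also counts triangles from their oldest vertex. This is why you need an extra integral estimate when $\gamma-\Gamma\ge 1/2$. That estimate does hold. Bound the spatial integral of the two arcs into the second friend by the smaller of the two spatially integrated kernels. This works uniformly in $t$ on the torus and gives an integrand of order at most $u^{-((\gamma-2\Gamma)\vee 0)}$ up to logarithms, so your cruder $u^{-(\gamma-\Gamma)}$ bound is valid. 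It also follows at once from mass transport: $\E_o H_\triangle$ is six times the expected number of triangles whose youngest vertex is the root.
\item For $c=0$ you only need tightness of the normalised numerator. A uniform-in-$t$ first-moment bound plus Markov's inequality suffices, so uniform integrability is not needed there.
\item On the denominator, your exact identity $\E_o[H_\wedge\mid U_o=u]=\lambda^f_u(\R^d\times(0,1))^2$ is cleaner than the paper's two-sided sandwich.
\end{itemize}

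\textbf{Corrections and simplifications.}
\begin{itemize}
\item Older friends contribute $\Theta(1)$, not $u^{-\gamma}$. The integral $\int_0^u (s/u)^\Gamma s^{-\gamma}u^{\gamma-1}\d s$ equals $1/(1+\Gamma-\gamma)$; the order $u^{-\gamma}$ belongs to the younger in-neighbours. You corrected this yourself and reached the right $\lambda^f_u\asymp 1+u^{-(\gamma-\Gamma)}$.
\item The borderline $\gamma-\Gamma=1/2$ is not an obstacle. There $\E_o H_\wedge=\infty$ still holds, so the truncation argument applies unchanged. No lower-tail control or $\log t$ bookkeeping is needed.
\item For uniform integrability when $\gamma-\Gamma<1/2$, the variance computation is unnecessary. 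The Palm friend count on the torus is Poisson with mean at most $\lambda^f_u(\R^d\times(0,1))$, because the torus distance to the root equals the Euclidean one on the centred box. It is therefore stochastically dominated by its $\R^d$ counterpart, whose $F(F-1)$ is integrable.
\end{itemize}
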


\newpage

\begin{remark}~\
    \begin{enumerate}[(i)]
        \item
        	The distribution \(\mu_2^f\) appearing in the representation of the limiting average clustering coefficient can be interpreted as the distribution that chooses a typical vertex among all vertices with at least two friends. Note that the average clustering coefficient is always positive.
        \item   
            The global friend clustering coefficient may be positive or zero since it is not localised at typical vertices but considers all closed and open triangles. This may put considerably more mass on old vertices with high degrees. If \(\gamma-\Gamma\geq 1/2\), then old vertices collect so many arcs \emph{and reconnect} to so many of these that they produce extraordinary many open triangles, most of which are not closed. Note that, if \(\Gamma\geq \gamma\), we always have \(\gamma-\Gamma\leq 0<1/2\) and the coefficient is positive. In that case, it does not matter how many in-neighbours the old vertices collect as they only form few reciprocal arcs and only those double arcs are considered by the coefficient.
    \end{enumerate}
\end{remark}

\paragraph{Interest clustering.} 
While the friendship clustering coefficient is build on triangles made of double arcs, the interest clustering coefficient is based on directed `bow-ties', see Figure~\ref{fig:interest}, and basically measures how likely it is that two vertices with a common interest also share another. In graph theory notion, a bow-tie is the directed bipartite subgraph \(K_{2,2}\) and we are again interested in the ration of closed vs.\ open bow-ties, i.e.\ the subgraph \(K_{2,2}\) with at most one arc missing. This coefficient was studied under the name \emph{diclique clustering coefficient} in~\cite{Bloznelis_2016_diclique} for a directed random graph that is build from a bipartite graph of nodes and attributes. The notion of \emph{interest clustering coefficient} goes back to~\cite{TrollietEtAl2022}. In their work, the authors apply various clustering coefficients to the Twitter network and find that the interest clustering coefficient is particularly accurate in the directed social network setting as it combines the social with the informational aspect of filter bubbles. 

Again, we propose a local and a global viewpoint on the interest clustering coefficient and start with the local perspective. The coefficient can be described as the probability that two nodes that follow a common interest also share another node that they follow. While we formulated the local friendship coefficient by point of view of a single vertex, the interest clustering coefficient is formulated in terms of pairs of vertices.

Let \(\scrI_t\) be the set of all pairs of vertices in \(\D_t\) in which the first vertex has outdegree at least two and shares at least one out-neighbour with the second vertex. That is,
\[
	\scrI_t = \Big\{(\x,\y)\in \D_t\times \D_t\colon \x\neq \y, \sharp\scrN_t^{\text{out}}(\x)\geq 2,  \sharp\big(\scrN_t^\text{out}(\x)\cap \scrN_t^\text{out}(\y)\big)\geq 1,\Big\}.
\] 
For \((\x,\y)\in\scrI_t\), we define the \emph{local interest clustering coefficient} of \((\x,\y)\) in \(\D_t\) as
\[
	c^\text{ic}((\x,\y),\D_t) = 2\frac{\sum_{\u,\v\in\D_t} \1_{\{\y\rightarrow \u,\y\rightarrow \v\}}\1_{\{\x\rightarrow \u, \x\rightarrow \v\}}}{\sum_{\u,\v\in\D_t} \1_{\{\y\rightarrow \u \text{ or } \y\rightarrow \v\}}\1_{\{\x\rightarrow \u, \x\rightarrow \v\}}},
\] 
 while for \((\x,\y)\not\in\scrI_t\), we simply set \(c^\text{ic}((\x,\y),\D_t) =0\).
The corresponding \emph{average interest clustering coefficient} is then defined as
\[
	c^\text{ic}_\text{av}(\D_t) = \frac{1}{\sharp\scrI_t}\sum_{(\x,\y)\in\scrI_t} c^\text{ic}((\x,\y),\D_t).
\]

\begin{theorem}[Average interest clustering] \label{thm:locIC}
	For all \(\beta>0\), \(\delta>1\), and \(\Gamma\geq 0\), we have, in probability as \(t\to\infty\),
	\begin{enumerate}[(i)]
		\item 
			for \(\gamma<1/2\) that
			\[
				c_\textup{av}^\textup{ic}(\D_t) \longrightarrow \frac{\E_o\big[\sum_{\y\in\D_o} c^{\textup{ic}}((\o,\y),\D_o) \1_{\{\scrN^\text{out}\cap \scrN^\text{out}(\y)\neq \emptyset\}}\, \big| \, \scrN^\text{out}\geq 2\big]}{\E_o\big[\sharp\{\y\colon \scrN^\text{out}\cap \scrN^\text{out}(\y)\neq \emptyset\} \, \big| \, \scrN^\text{out}\geq 2\big]},
			\]
		\item
			and for \(\gamma\geq 1/2\), that \(c_\textup{av}^\textup{ic}(\D_t)\to 0\).
	\end{enumerate}
\end{theorem}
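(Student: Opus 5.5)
The plan is to write the average interest clustering coefficient as a ratio of two empirical averages over vertices,
\[
	c^\text{ic}_\text{av}(\D_t) = \frac{\frac{1}{\sharp\scrV(\D_t)}\sum_{\x\in\D_t} \1_{\{\sharp\scrN_t^\text{out}(\x)\geq 2\}}\sum_{\y\neq\x} c^\text{ic}((\x,\y),\D_t)\1_{\{\scrN_t^\text{out}(\x)\cap\scrN_t^\text{out}(\y)\neq\emptyset\}}}{\frac{1}{\sharp\scrV(\D_t)}\sum_{\x\in\D_t}\1_{\{\sharp\scrN_t^\text{out}(\x)\geq 2\}}\,\sharp\{\y\colon \scrN_t^\text{out}(\x)\cap\scrN_t^\text{out}(\y)\neq\emptyset\}},
\]
and treat numerator and denominator separately. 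Both summands are functionals $H(\x,\D_t)$ of the undirected graph neighbourhood of radius two around $\x$: the pair $(\x,\y)$ is at undirected distance two via the common out-neighbour, and $c^\text{ic}$ only involves out-neighbours of $\x$ and $\y$. Theorem~\ref{thm:DARCMasLL} therefore applies once uniform integrability is established. Dividing both limits by $\P_o(\sharp\scrN^\text{out}\geq 2)$ gives the claimed ratio of conditional expectations in (i).

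For (i), with $\gamma<1/2$, the main point is uniform integrability. The numerator summand is bounded by the denominator summand, since $c^\text{ic}\le 1$ (the indicator of ``both'' is at most the indicator of ``or'', and the factor $2$ only compensates for the ordered pairs in the denominator). So it suffices to control the denominator. The number of $\y$ sharing an out-neighbour with $\x$ is at most $\sum_{\z\in\scrN^\text{out}(\x)}\sharp\scrN^\text{in}(\z)$. I will bound its second moment uniformly in $t$ by a Mecke/Campbell computation in $\D_t$. The expected size of $\sum_{\z}\sharp\scrN^\text{in}(\z)^2$ over out-neighbours $\z$ of a typical vertex behaves like the size-biased second moment of the indegree. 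A vertex born at $s$ has indegree of order $s^{-\gamma}$ and is an out-neighbour of a typical vertex with probability density of order $s^{-\gamma}$. This yields integrals of the type $\int_0^1 s^{-\gamma}\cdot s^{-2\gamma}\d s$, and a more careful accounting of the first moment gives $\int_0^1 s^{-2\gamma}\d s$, which is finite exactly when $\gamma<1/2$. I would aim for a $(1+\varepsilon)$-moment bound uniform in $t$, taking care with the finite torus correction at very old birth times $s\approx 1/t$. Once the denominator limit $\E_o[\sharp\{\y\colon\dots\}\1\{\sharp\scrN^\text{out}\ge2\}]$ is finite, positivity of the denominator limit is easy, and (i) follows by the continuous mapping theorem.

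For (ii), with $\gamma\ge 1/2$, I expect the limit of the normalised denominator to be infinite. Fatou's lemma combined with Theorem~\ref{thm:DARCMasLL} applied to truncations $H\wedge K$ gives $\liminf$ of the denominator $\ge \E_o[\cdots]=\infty$ in probability, because the first-moment computation above diverges like $\int s^{-2\gamma}\d s$. For the numerator I will show that it stays bounded in probability. The key step is to split the pairs $(\x,\y)$ according to whether their common out-neighbour $\z$ is old. For pairs whose only common out-neighbours are young, the contribution is uniformly integrable and converges. For pairs sharing a very old hub $\z$, I need $c^\text{ic}((\x,\y),\D_t)$ to be small. Two typical followers of an old hub share a second out-neighbour only with probability of order the expected outdegree overlap, which is $O(1)$ divided by the number of candidate pairs. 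I would therefore bound the expected number of closed bow-ties, i.e.\ copies of $K_{2,2}$, per vertex, uniformly in $t$, and show this is finite for all $\gamma<1$ by an integral over birth times of the two shared targets.

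The main obstacle is this last estimate, namely showing that closed bow-ties through hubs do not also blow up when $\gamma\ge1/2$. This requires a careful four-point Mecke computation on the torus, tracking both the spatial constraint and the age factors $s^{-\gamma}$ of the two targets. If the count does grow, the fallback is to show that it grows strictly slower than the denominator, which is of order $t^{2\gamma-1}$ or $\log t$, so the ratio still tends to zero.
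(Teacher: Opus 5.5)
Your part (i) follows the paper's route: write \(c^{\text{ic}}_{\text{av}}\) as a ratio of two vertex averages, dominate by the two-step count \(\sum_{\z\in\scrN^{\text{out}}(\x)}\sharp\scrN^{\text{in}}(\z)\), use uniform integrability for \(\gamma<1/2\), and apply Theorem~\ref{thm:DARCMasLL}. Your truncation argument giving \(\sharp\scrI_t/\sharp\scrV(\D_t)\to\infty\) for \(\gamma\ge1/2\) is also correct. One small correction: \(c^{\text{ic}}\) is not bounded by \(1\). Write \(C=\sharp(\scrN^{\text{out}}(\x)\cap\scrN^{\text{out}}(\y))\) and \(n=\sharp\scrN^{\text{out}}(\x)\). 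Then \(c^{\text{ic}}((\x,\y))=2(C-1)/(2n-C-1)\), which equals \(2\) when \(C=n\). Domination with constant \(2\) is enough. Also, for \(\gamma\in[1/3,1/2)\) only the \((1+\varepsilon)\)-moment is finite, so that is indeed the route to take.

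The genuine gap is in (ii) for \(\gamma\ge 2/3\). Your plan needs the numerator per vertex to be tight. You want this from a uniform-in-\(t\) bound on closed bow-ties per vertex "for all \(\gamma<1\)". That bound holds only for \(\gamma<2/3\) (cf.\ Lemma~\ref{lem:orderSclosed}), and there your argument does close. For \(\gamma\ge2/3\) it is false. A vertex born at time \(s\) and an older vertex within distance of order \(s^{-\gamma/d}\) share of order \(s^{-\gamma}\) younger followers. These are forward arcs, so \(\Gamma\) plays no role. This gives of order \(s^{-2\gamma}\) ordered pairs \((\x,\y)\) with \(C(\x,\y)\ge2\) and \(c^{\text{ic}}((\x,\y))\ge 1/\sharp\scrN^{\text{out}}(\x)\), which is typically of order one. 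Such target pairs occur with density of order \(s^{1-\gamma}\,\mathrm{d}s\) per unit volume. So the numerator per vertex is of order \(\int_{1/(t\log t)}^1 s^{1-3\gamma}\,\mathrm{d}s\), which diverges. You are therefore in your fallback, but the Fatou/truncation argument gives divergence of the denominator with no rate. So "grows strictly slower than the denominator" is unsupported.

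To fill the gap you need two things. First, the first-moment bound \(\bfN_t\le\cZ_t\) (since \(c^{\text{ic}}\le 2\1_{\{C\ge2\}}\le C(C-1)\)), with \(\E\cZ_t=t\int_{1/(t\log t)}^1 S^c(t_o)\,\mathrm{d}t_o\) controlled as in Lemma~\ref{lem:orderSclosed}. Second, a lower bound on \(\bfM_t=\sharp\scrI_t\) that holds with high probability at a larger scale. The paper aims at the second via the order of \(\E\bfM_t\) (Lemma~\ref{lem:orderMN}) plus a variance bound. For \(\gamma>1/2\) you should work at the typical scale instead:
\begin{itemize}
\item With probability \(1-e^{-K}\), some vertex of \(\D^t\) is born before \(K/t\).
\item It has of order \(t^{\gamma}\) followers, a positive fraction of which have outdegree at least two.
\item Hence \(\bfM_t\ge c_K t^{2\gamma}\) with probability at least \(1-e^{-K}-o(1)\). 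At \(\gamma=1/2\) the corresponding scale is \(t\log t\).
\item Markov's inequality with \(\E\cZ_t=o(t^{2\gamma})\) (resp.\ \(O(t)\) at \(\gamma=1/2\)) then finishes.
\end{itemize}
Avoid normalising by \(\E\bfM_t\). For \(\gamma>1/2\), \(\E\bfM_t\asymp t^{2\gamma}(\log t)^{2\gamma-1}\) is dominated by vertices born in \((1/(t\log t),1/(Kt))\), which are present with probability at most \(1/K\). Meanwhile \(\bfM_t/t^{2\gamma}\) is tight. So \(\bfM_t/\E\bfM_t\to0\) in probability, and concentration around the mean is not available.
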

\begin{remark}[Size-biasing in second-order normalisations]
The interest clustering coefficient is normalised by counts of length-two configurations (pairs of out-neighbours and shared out-neighbours). In sparse heavy-tailed networks, such second-order normalisations typically induce a size-biasing effect: sampling along two-step structures disproportionately favours vertices with unusually large (out- or in-)neighbourhoods. This is analogous to the friendship paradox \cite{feld_91_fiendhsip,Hazra_f2025_friendship} and implies that the asymptotic behaviour of interest-type clustering statistics is governed not only by local motif probabilities but also by the finiteness of suitable second moments of degree counts. In our setting this mechanism leads to the threshold $\gamma=1/2$ that appears in Theorem~\ref{thm:locIC}. A similar effect can be observed by local triangle counts, e.g.\ the average friendship clustering, when the open triangles are not considered from their tip but from a boundary vertex. That is, instead of pairs of neighbours, the denominator counts the number of paths of length two. This metric is then also referred to as \emph{closure coefficient}~\cite{Yin19_closure, vdH_2018_closure, Stegehuis_2020_closure}.
\end{remark}

As in the case of friendship clustering, we also provide a global variant of the interest clustering coefficient. We define the \emph{global interest clustering coefficient} as 
\[
    c_\text{glob}^\text{ic}(\D_t)=2\frac{\sum_{\x,\y,\u,\v\in\D_t}\1_{\{\y\rightarrow \u,\y\rightarrow \v\}}\1_{\{\x\rightarrow \u, \x\rightarrow \v\}}}{\sum_{\x,\y,\u,\v\in\D_t} \1_{\{\y\rightarrow \u \text{ or } \y\rightarrow \v\}}\1_{\{\x\rightarrow \u, \x\rightarrow \v\}}}.
\]

\begin{theorem}[Global interest clustering]\label{thm:globIC}
    For all \(\beta>0,\gamma\in(0,1),\delta>1,\) and \(\Gamma\geq 0\), there exists a constant \(c\geq 0\) such that, in probability as \(t\to\infty\),
	 \[
	 	c^\textup{ic}_\textup{glob}(\D_t) \longrightarrow c,
	 \]
	where \(c>0\) if and only if \(\gamma<1/2\).
\end{theorem}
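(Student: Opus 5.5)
The plan is to write numerator and denominator of $c^\textup{ic}_\textup{glob}(\D_t)$ as sums over vertices $\x$ of local functionals, normalise both by $\sharp\scrV(\D_t)$, and treat them separately with the law of large numbers (Proposition~\ref{prop:LLN}/Theorem~\ref{thm:DARCMasLL}). Write the numerator as $N_t=\sum_{\x}H_1(\x,\D_t)$, where $H_1(\x,\D_t)$ counts the closed bow-ties $(\x,\y,\u,\v)$ with $\x$ as the first vertex. These are the quadruples with $\x\to\u,\x\to\v,\y\to\u,\y\to\v$. The functional $H_1$ depends only on the radius-two undirected neighbourhood of $\x$. Similarly, write the denominator as $M_t=\sum_\x H_2(\x,\D_t)$, where $H_2$ counts pairs of out-neighbours $\u,\v$ of $\x$ together with an in-neighbour $\y\neq\x$ of $\u$ or $\v$.

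First compute $\E_o H_2(\o,\D_o)$ via the Mecke formula. Up to constants, $H_2$ equals $\sum_{\u\in\scrN^{\text{out}}}(\sharp\scrN^{\text{out}}-1)\,\sharp\scrN^{\text{in}}(\u)$. The dominant contribution comes from $\u$ old with large indegree. Here we use that $\o$ links to $\u=(u,s)$ with $s<U_o$, at rate $\asymp s^{-\gamma}U_o^{\gamma-1}\d s$. Given $\u$, $\sharp\scrN^{\text{in}}(\u)$ is at least Poisson with mean $\asymp s^{-\gamma}$. Hence $\E_o H_2\asymp \int\!\int_0^{U_o} s^{-2\gamma}\d s$ (times bounded factors), which is finite iff $\gamma<1/2$. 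The reverse arcs only add terms with $s>U_o$ weighted by $(U_o/s)^{\Gamma}$. These are bounded by the undirected ARCM computation, so they do not affect finiteness.

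For $\gamma<1/2$ I will check uniform integrability of $\frac{1}{\sharp\scrV(\D_t)}\sum_\x H_i(\x,\D_t)$, most easily via a bounded second moment or a $(1+\varepsilon)$-moment of $H_2$ in the finite model. The aim is to show this is uniformly bounded in $t$ by comparing the torus connection probabilities with the Poisson ones. Given that, $N_t/\sharp\scrV\to\E_oH_1$ and $M_t/\sharp\scrV\to\E_oH_2$, so $c=2\E_oH_1/\E_oH_2$. Positivity of $c$ follows because $\E_oH_1>0$: with positive probability there is a configuration of four nearby vertices of suitable ages realising the bow-tie.

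For $\gamma\ge1/2$ we cannot use uniform integrability. Instead I show $N_t/\sharp\scrV(\D_t)$ stays tight, or at least $N_t=o(M_t)$, while $M_t/\sharp\scrV(\D_t)\to\infty$ in probability. The lower bound on $M_t$ follows from truncation: apply the LLN to $H_2\wedge K$, which gives $\liminf M_t/\sharp\scrV\ge\E_o[H_2\wedge K]\to\infty$ as $K\to\infty$. For the numerator, a closed bow-tie requires $\y$ to share two out-neighbours $\u,\v$ with $\x$. I would bound $\E N_t/t$ by integrating over the ages of $\u,\v$. The key point is that the old vertices $\u,\v$ must \emph{both} be near both $\x$ and $\y$, which, for the same reason as in the undirected ARCM clustering analysis of \cite{GGLM2019}, costs an extra spatial factor. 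I expect the resulting integral to grow strictly slower than the denominator's divergence (e.g.\ as $t^{2\gamma-1}$ versus a larger power, or $\log t$ at $\gamma=1/2$ against a bounded quantity).

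This comparison for $\gamma\ge 1/2$ is the main obstacle. I would compute the expected counts in $\D_t$ explicitly with the torus kernel and birth times in $(0,t)$. This gives the growth rates $\E M_t\asymp t\cdot t^{2\gamma-1}$ (or $t\log t$ at $\gamma=1/2$) and $\E N_t=o(\E M_t)$. I would then add a concentration argument, namely a variance bound or a Markov bound on $N_t$ together with the truncated lower bound on $M_t$, to conclude that $c^\textup{ic}_\textup{glob}(\D_t)\to0$ in probability.
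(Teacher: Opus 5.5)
\emph{Verdict: there is a genuine gap for $\gamma\geq 2/3$.}

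\textbf{What is correct.} For $\gamma<1/2$ your argument is the paper's: laws of large numbers for the rooted closed and open bow-tie counts, the ratio of the limits, and positivity from $\E_oH_1>0$. For uniform integrability you must take the $(1+\varepsilon)$-moment option, not the second moment. Indeed $\E_o[H_2^2]\geq\E_o\big[\1\{\sharp\scrN^{\text{out}}\geq2\}\sum_{\u\in\scrN^{\text{out}}}(\sharp\scrN^{\text{in}}(\u)-1)^2\big]$, which is infinite once $\gamma\geq1/3$: older out-neighbours of age $s$ appear at rate $\asymp U_o^{\gamma-1}s^{-\gamma}\d s$ and have $\E[\sharp\scrN^{\text{in}}(\u)^2]\asymp s^{-2\gamma}$.

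For $\gamma\geq1/2$, your lower bound $M_t/\sharp\scrV(\D_t)\to\infty$ via $H_2\wedge K$ is correct. Combined with tightness of $N_t/\sharp\scrV(\D_t)$, it settles $\gamma\in[1/2,2/3)$. There $\E N_t=O(t)$ on the high-probability event $\cE_t$ that no vertex is born before $1/(t\log t)$, by Lemma~\ref{lem:orderSclosed}. On that range this is simpler than the paper's argument.

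\textbf{The gap: $\gamma\geq2/3$.} Here $N_t/t$ is not tight, not even in probability. In $\D^t$, vertices of birth time $\asymp s$ at mutual distance $|u-v|^d\lesssim s^{-\gamma}$ form about $ts^{2-\gamma}$ pairs. Each such pair has $\asymp s^{-\gamma}$ common young in-neighbours. So every dyadic scale with $ts^{2-\gamma}\to\infty$ contributes $\asymp ts^{2-3\gamma}$ closed bow-ties, and these contributions sum to $\gg t$ when $\gamma\geq2/3$.

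Your closing step then compares two divergent quantities using tools that carry no rate:
\begin{itemize}
\item Markov only gives $N_t=O(\E N_t)$ in probability, with $\E N_t/t\to\infty$.
\item The truncated LLN only gives $M_t/t\to\infty$, at an unspecified speed.
\item $\E N_t=o(\E M_t)$ is useless without a lower bound on $M_t$ at a comparable scale.
\end{itemize}

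\textbf{Why $M_t\geq\varepsilon\E M_t$ is not the fix.} The paper closes its argument this way, via Lemma~\ref{lem:open-bowtie-conc} ($Y_t/\E Y_t\to1$ by second moments). For $\gamma>1/2$ this fails; take $\Gamma>0$ for definiteness.
\begin{itemize}
\item The truncated mean $\E Y_t\asymp t(t\log t)^{2\gamma-1}$ is carried by the event, of probability $\asymp1/\log t$, that some vertex is born at time $\asymp1/(t\log t)$.
\item Typically the oldest vertex is born at time of order $1/t$, and then $Y_t$ is only of order $t^{2\gamma}$. Hence $Y_t/\E Y_t\to0$ in probability.
\item Consistently, overlapping pairs of bow-ties sharing the hub $\v$ already give $\Var(Y_t)\gtrsim t\int_{1/(t\log t)}^1s^{-4\gamma}\d s\asymp t(t\log t)^{4\gamma-1}$. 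This is $\log t$ times $(\E Y_t)^2$.
\item Without truncation, $\E M_t$ is even larger.
\end{itemize}

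\textbf{What works.} You need a lower bound on $M_t$ at its typical scale, and an upper bound on $N_t$ below that scale.
\begin{itemize}
\item The oldest vertex $\v^*$ of $\D^t$ has birth time of order $1/t$ with probability close to one. Hence its indegree is at least of order $t^{\gamma}$.
\item A positive proportion of its in-neighbours have a further out-neighbour, since their forward outdegree is Poisson with mean bounded below.
\item Therefore $M_t\geq c\,t^{2\gamma}$ with probability close to one.
\item On the other side, $\E[N_t\1_{\cE_t}]\leq t\int_{1/(t\log t)}^1S^c(t_o)\d t_o=o(t^{2\gamma})$ by Lemma~\ref{lem:orderSclosed}, for every $\gamma\in(1/2,1)$ and $\Gamma\geq0$.
\item Markov's inequality then gives $N_t/M_t\to0$ in probability.
\end{itemize}
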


%%%%%%%%%%%%%%%%%%%%%%%%
%%%%%%% Zusammenfassung/Ausblick %%%%%%%%%
%%%%%%%%%%%%%%%%%%%%%%%%  
\section{Conclusion and future work}\label{sec:outlook}
We introduced and analysed a directed spatial network model that augments the age-dependent random connection model by an explicit reciprocity mechanism. The model remains analytically tractable through its Poissonian embedding while capturing empirically prominent directed features such as heterogeneous reciprocation and non-trivial directed clustering. Our main results identify sharp regimes for in- and outdegree behaviour, provide laws of large numbers for empirical network statistics through a marked-point-process representation, and establish conditions under which local and global clustering coefficients remain strictly positive in the sparse limit. In addition, we proved a dichotomy for out-percolation, separating parameter regions with a non-trivial critical intensity from regimes in which out-percolation occurs for arbitrarily small intensities.

Several natural extensions remain open. First, it would be of interest to quantify \emph{graph distances} and the distribution of \emph{edge lengths} (in both the Euclidean and age-scaled metrics) and to relate these to navigation and reachability properties. Secondly, beyond out-percolation, one may study in-percolation, strong percolation, and the structure and robustness of large weakly/strongly connected components, including potential ``weak giant'' phenomena in the sense of directed local exploration. Thirdly, the law of large numbers for vertex-edge markings allows in principle for more global functionals than those treated here, and it would be worthwhile to develop a systematic toolbox for such applications in directed settings. Finally, from a data-science viewpoint, the model suggests concrete directions for \emph{statistical fitting} (e.g.\ via degree and motif statistics) and for the analysis of ranking measures such as \emph{PageRank} in the presence of tunable reciprocity.

%%%%%%%%%%%%%%%%%%%%%%%%
%%%%%%% Proofs %%%%%%%%%
%%%%%%%%%%%%%%%%%%%%%%%%  
\section{Proofs of main theorems} \label{sec:proofs}
In this section we provide proofs for our results. We start with a graphical construction of the model via \emph{independent vertex-edge markings} for both the infinite model \(\D\) as well as the family \((\D_t\colon t\geq 0)\).

\subsection{Construction from an vertex-edge-marked point process}\label{sec:edge-marked}
We now construct the DARCM as a deterministic map of a Poisson point process together with vertex and edge marks. To this end, let \(\eta\) be a unit intensity Poisson point process on \(\R^d\). We may enumerate \(\eta=(X_1,X_2,\dots)\), cf.~\cite{LastPenrose2017}, and call the elements of \(\eta\) the \emph{vertex locations}. Let further \(\mathscr{T}=(T_i:i\in\N)\) be an i.i.d.\ sequence of random variables distributed uniformly on \((0,1)\) which is independent of \(\eta\). The marked Poisson process \(\scrX\) can then be represented as
\[
	\scrX=\big(\X_i=(X_i,T_i)\in\eta\times\mathscr{T}:i\in\N\big).
\]
Additionally, let \((U_{i,j}:i<j\in\N)\) be another i.i.d.\ sequence of \(\operatorname{Uniform}(0,1)\) random variables independent of \(\scrX\). We define \(\mathscr{U}=(U_{i,j}:i,j\in\N)\) whose elements we call \emph{edge marks} by setting \(U_{j,i}=U_{i,j}\) for \(i<j\) and \(U_{i,i}=0\). We then set
\begin{equation*}%\label{eq:edgeMarking}
	\xi=\big(((\X_i,\X_j),U_{i,j})\in\scrX^{2}\times\mathscr{U}:i,j\in\N\big)
\end{equation*}
and call \(\xi\) an \emph{independent vertex-edge marking} of $\scrX$ in accordance with the construction in~\cite{HvdHLM20}. Observe that the precise construction of \(\xi\) may depend on the ordering of \(\eta\) but its law does not. Clearly, \(\scrX\), \(\eta\), and \(\mathscr{T}\) can be recovered from \(\xi\). 

We now fix parameters \(\beta>0\), \(\gamma\in(0,1)\), \(\delta>1\), and \(\Gamma\geq 0\) and construct a digraph \(\D^{\beta,\gamma,\delta,\Gamma}(\xi)\) with vertex set \(\scrX\) and arc set
\begin{equation}\label{eq:edgeSet}
	\Big\{(\X_i,\X_j):\; U_{i,j}< \1_{\{T_i<T_j\}}\Big(\tfrac{\beta}{T_i^\gamma T_j^{1-\gamma}|X_i-X_j|^d}\Big)^{\delta} + \1_{\{T_i\geq T_j\}}\big(\tfrac{T_j}{T_i}\big)^\Gamma\big(\tfrac{\beta}{T_i^\gamma T_j^{1-\gamma}|X_i-X_j|^d}\big)^{\delta}\Big\},
\end{equation}
where the ordered tuple \((\X_i,\X_j)\) represents the arc \(\X_i\rightarrow\X_j\). Observe that \(\D^{\beta,\gamma,\delta,\Gamma}(\xi)\) and \(\D[\beta,\gamma,\delta,\Gamma]\) have the same law. We work from here on onwards on the full measurable space on which \(\xi\) is given, together with a probability measure \(\P\) with associated expectation \(\E\). Thus, we may identify \(\D=\D[\beta,\gamma,\delta,\Gamma]\) with \(\D^{\beta,\gamma,\delta,\Gamma}(\xi)\) and continue using the notation \(\D\). For the underlying undirected graph, we use the analogous representation \(\G = \D^{\beta,\gamma,\delta,0}(\xi)\). Let us remark that~\eqref{eq:edgeSet} shows that our model can be viewed as a directed \emph{weight-dependent random connection model} \cite{GHMM2022,GLM2021}, i.e.\ such a model with a non-symmetric connection function.

To add a root to our graph, we use the Palm version~\cite{LastPenrose2017} of \(\xi\). To this end, denote by \(\x_0=(0,t_0)\) a vertex located at the origin with given birth time \(t_0\). Let us denote by \(\scrX_{\x_0}=\scrX\cup\{(x_0,t_0)\}\) the vertex set with the additional vertex at the origin. We further extend the edge-mark collection by the sequence of independent uniform random variables \((U_{0,j}:j\in\N)\) (together with \(U_{0,0}=0\) and \(U_{j,0}=U_{0,j}\)) and denote the resulting sequence by \(\mathscr{U}_{\x_0}\). Finally, we define the edge marked process containing the origin as
	\[
		\xi_{\x_0}:=\xi \cup\big\{\big((\x_0,\X_i),U_{0,i}\big):i\in\N\big\}\cup \big\{\big((\X_i,\x_0),U_{0,i}\big):i\in\N\cup\{0\}\big\}.	
	\]
	The graph containing a vertex located at the origin is then constructed in the obvious manner as \(\D_{\x_0}=\D^{\beta,\gamma,\delta,\Gamma}(\xi_{\x_0})\). We denote the probability measure and expectation governing \(\xi_{\x_0}\) by \(\P_{\x_0}\) or \(\P_{(x_0,t_0)}\) and \(\E_{\x_0}\) respectively. If the vertex mark of \(\x_0\) is yet unrevealed and thus uniformly distributed, we denote the root vertex by \(\0=(o,T_o)\) and set \(\P_{o}:=\int_0^1\P_{(o,t)}\d t\). The distribution \(\P_o\) corresponds to \(\xi\) (and thus \(\D\)) seen from the location of a typical vertex ~\cite[Chapter~9]{LastPenrose2017}. 
	
	In the same way, finitely-many given vertices \(\y_1=(y_1,s_1), \y_2=(y_2,s_2),\dots\) may be added to the graph using negative indices and writing \(y_i=x_{-i}\) and \(s_i=t_{-i}\). We then write \(\xi_{\y_1,\y_2,\dots}\), and \(\P_{\y_1,\y_2,\dots}\), etc. Note again that the above construction depends on the labelling of the vertices but the law does not. In the following the indices used play no more particular role and we refer to vertices as \(\x=(x,t_x)\) or \(\y=(y,t_y)\) just like in the introduction. If needed, we index the edge mark by the corresponding per of vertices, e.g.\ \(U_{\x,\y}\). We may further denote a sequence of vertices by \(\x_1,\x_2,\dots\) without referring to the labels above.

	\paragraph{The model restricted to finite boxes.}
	With the above construction, it is straight-forward to construct the model restricted to finite boxes \((-t^{1/d}/2,t^{1/d}/2]^d\). Instead of constructing the graph on the whole vertex-edge marking, we may simply take the restriction of \(\xi\) to those vertices with locations in \((-t^{1/d}/2,t^{1/d}/2]^d\). Let us denote this restricted vertex-edge marking by \(\xi^t\), that is,
	\[
		\xi^t = \big\{\big((\x,\y),U_{\x,\y}\big)\in\xi\colon x,y\in (-t^{1/d}/2,t^{1/d}/2]^d\big\},
	\]
	and by \(\xi^t_o\) the corresponding Palm version. The subgraph of \(\D\) induced by the box is then simply given by \(\D^{\beta,\gamma,\delta,\Gamma}(\xi^t)\). In the following, we are interested in the model on finite boxes with \emph{periodic boundary conditions}. To this end, let \(\cD^t\) be the functional, acting on vertex-edge markings on \((-t^{1/d}/2,t^{1/d}/2]^d\), that gives a digraph by way of~\eqref{eq:edgeSet} but with the Euclidean metric replaced by the torus metric  
	\[
    	\operatorname{d}_t(x,y):=\min\big\{|x-y+u|:u\in\{-\sqrt[d]{t},0,\sqrt[d]{t}\}^{\times d}\big\}, 
    \]
    where we omit the dependence on the model parameters to avoid confusion. Let us write \(\D^t=\cD^t(\xi^t)\) and, for a given \(\x\in\R^d\times(0,1)\), similarly \(\D^t_{\x}=\cD^t(\xi^t_{\x})\). By local finiteness and amenability, the following lemma is almost immediate.
    
    \begin{lemma}\label{lem:equalNeighbourhoods} 
    	Let \(\x\in\R^d_t\times(0,1)\) be given. Then, almost surely there exists \(T=T(\x)\) such that for all \(t>T\) the in- and out-neighbourhoods of \(\x\) in \(\D^t_{\x}\) and \(\D_{\x}\) coincide. 
    \end{lemma}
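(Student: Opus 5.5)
The plan is to use that both graphs are built from the same vertex-edge marking $\xi_{\x}$, so everything is coupled, and to compare arcs incident to $\x$ one by one. Fix $\x=(x,t_x)$ with $x$ in the box for all large $t$; since the boxes $(-t^{1/d}/2,t^{1/d}/2]^d$ exhaust $\R^d$, this holds for $t\ge T_0(x)$. By Theorem~\ref{thm:neighbourhood} (in its conditional form, or via the Poisson intensities $\lambda^{\text{in}}_{\x},\lambda^{\text{out}}_{\x}$, whose total masses are finite for every fixed $t_x\in(0,1)$), the in- and out-neighbourhoods of $\x$ in $\D_{\x}$ are almost surely finite. Hence there is a random radius $R_1<\infty$ with all neighbours of $\x$ in $\D_{\x}$ located in $B(x,R_1)$.

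\textbf{No lost arcs.} Take $t$ so large that $B(x,2R_1)$ lies inside the box and is at distance more than $R_1$ from its boundary. For $y\in B(x,R_1)$ we then have $\operatorname{d}_t(x,y)=|x-y|$: any translate $x-y+u$ with $u\neq 0$ has norm at least $t^{1/d}-|x-y|>|x-y|$. Since the edge marks $U_{\x,\y}$ and the thresholds in~\eqref{eq:edgeSet} coincide, every neighbour of $\x$ in $\D_{\x}$ is also a neighbour in $\D^t_{\x}$, with the same orientation and reciprocation.

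\textbf{No spurious arcs.} This is the main step. A spurious neighbour is a vertex $\y$ in the box with $|x-y|>R_1$ but $\operatorname{d}_t(x,y)$ small enough that $U_{\x,\y}$ falls below the torus threshold. Because $\rho$ is non-increasing, $\operatorname{d}_t\le|\cdot|$ makes torus thresholds at least the Euclidean ones, so only the extra arcs need controlling. Consider
\[
S_t=\sharp\{\y\in\scrX\cap \text{box}_t\colon \x\sim\y \text{ in } \D^t_{\x} \text{ but not in } \D_{\x}\}.
\]
Such a $\y$ has $\operatorname{d}_t(x,y)<|x-y|$, which forces $|x-y|\ge t^{1/d}/2-|x|$ and puts $y$ within distance $\operatorname{d}_t(x,y)$ of a periodic image of $x$. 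Conditioning on $\scrX$ and using independence of edge marks together with Mecke's formula, $\E S_t$ is bounded by the mass that the (in+out) intensity of $\x$, evaluated with distance $\operatorname{d}_t$, gives to points $y$ far from $x$ in Euclidean terms. Equivalently, it is the $\lambda_{\x}$-mass of the complement of $B(x, t^{1/d}/2-|x|)$ summed over the at most $3^d$ periodic images. By integrability of $\lambda_{\x}$ this mass tends to $0$, but that only gives convergence in probability. To get almost-sure eventual absence, I would monotonise. Along the integer sequence $t=n$, bound $\P(S_n\ge1)\le 3^d\lambda_{\x}(\{|y-x|\ge n^{1/d}/4\})$ and check summability in $n$. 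With $\rho_\delta$ the tail of $\lambda_{\x}$ in $r=|y|$ decays like $r^{-d(\delta-1)}$, so the terms are $n^{-(\delta-1)}$, which is summable only for $\delta>2$. For general $\delta>1$ I would instead use that the same edge mark decides the arc for all $t$. A $\y$ can be a spurious neighbour for some $t$ only if $U_{\x,\y}$ lies below the threshold at the minimal possible torus distance from $x$ to $y$ over all large $t$, which is at least roughly $|y|$ up to a factor, since images of $x$ move away as $t$ grows. The expected number of $\y$ ever spurious for $t\ge n$ is then bounded by an integrable tail of $\lambda_{\x}$ that tends to $0$. Since these events decrease in $n$, the probability that spurious neighbours occur for arbitrarily large $t$ is zero.

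\textbf{Conclusion.} Because $t$ is continuous, I pass from the monotone events directly, and vertices with $t$-dependent box membership are handled by the same bound. Taking $T$ as the maximum of $T_0$, the time for the ``no lost arcs'' step, and the a.s.\ finite time after which no spurious arcs occur yields the lemma. The obstacle I expect is precisely this uniform-in-$t$ control of wrap-around arcs, that is, the geometric fact that a fixed point $y$ is torus-close to $x$ for only a bounded range of $t$, with torus distance at least $c|x-y|$ otherwise.
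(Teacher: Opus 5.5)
Your proof is correct and is essentially the argument the paper compresses into ``local finiteness and amenability'': finiteness of the neighbourhood of $\x$ in $\D_{\x}$, together with agreement of $\operatorname{d}_t$ with the Euclidean metric on a fixed ball, rules out lost arcs. The wrap-around arcs, which the paper leaves implicit, are correctly controlled uniformly in $t$ by your fixed-edge-mark monotonicity argument, so the first-moment/summability detour that only works for $\delta>2$ can simply be dropped.

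The geometric fact you flag as the main obstacle has a one-line proof. For $y$ in the box, $|y_i|\le t^{1/d}/2$ gives $\min_{u_i\in\{0,\pm t^{1/d}\}}|x_i-y_i+u_i|\ge |y_i|-|x_i|$ in every coordinate, hence $\operatorname{d}_t(x,y)\ge |y|-|x|$ for all $t$. For $x=o$, the root of the Palm version, the two metrics even coincide on the whole box, so no wrap-around arcs occur at all.
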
 
    
    The lemma further implies that the same is true for all \(k\)-neighbourhoods of \(\x\) in the undirected sense. Put differently, for large enough \(t\) the digraphs \(\D^t\) and \(\D\) coincide in any finite graph-neighbourhood of \(\x\). That is, all edges and their orientation coincide up to fixed graph distance.
    
    \paragraph{Graphical construction of directed age-based preferential attachment.}
    We may construct, for each \(t>0\), the graph \(\D_t\) of Section~\ref{sec:DPA} in the same way. Indeed, if we denote by \(\xi_t\) a vertex-edge marking with underlying Poisson process on \((-1/2,1/2]^d\times(0,t]\) (that is all vertices are located in the unit interval and are marked with a birth time in \((0,t]\) and all potentially arcs are marked with an independent uniform random variable), then \(\D_t=\cD^1(\xi_t)\). Let us note here that we may have double used indices of \(\xi\) to indicate both, the addition of vertices and the vertex-edge marking just described. However, as additional vertices are always in bold, there is no danger of confusion. In fact, one may couple all digraphs thus constructed by first taking a larger vertex-edge marking \(\overline{\xi}\) based on a unit-intensity Poisson process on \(\R^d\times(0,\infty)\) and let \(\xi\), \(\xi^t\), and \(\xi_t\) be the respective restrictions of \(\overline{\xi}\). We may therefore still use the common probability measure \(\P\) and expectation operator \(\E\).	
    
    For a fixed \(t>0\), there is a nice connection between the digraphs \(\D^t\), and \(\D_t\). To see this, we define for any \(t>0\) the rescaling map
	\begin{alignat*}{4}
			& h_t: & \big[-\tfrac{1}{2},\tfrac{1}{2}\big)^d\times(0,t) & \longrightarrow \big[-\tfrac{t^{1/d}}{2},\tfrac{t^{1/d}}{2}\big)^d\times (0,1) \\
			& & (y,s) & \longmapsto \big({t^{1/d}y},\tfrac{s}{t}\big).
	\end{alignat*} 
	The rescaling map acts canonically on point sets as well as on edge marks by defining \(h_t(U_{\x,\y})=U_{h_t(\x),h_t(\y)}\) and therefore to vertex-edge markings as well. By the Poisson mapping theorem~\cite{LastPenrose2017}, \(h_t(\xi_t)\) and \(\xi^t\) follow the same law. Moreover, for each \(t_y<t_x<t\), 
	\[
		\rho\Big(\frac{\nicefrac{t_x}{t}\operatorname{d}_t(t^{1/d}x,t^{1/d}y)^d}{\beta\big(\tfrac{\nicefrac{t_x}{t}}{\nicefrac{t_y}{t}}\big)^\gamma}\Big) = \rho\big(\frac{t_x \operatorname{d}_1(x,y)^d}{\beta (\nicefrac{t_x}{t_y})^\gamma}\big) \ \text{ as well as } \ \big(\frac{\nicefrac{t_x}{t}}{\nicefrac{t_y}{t}}\big)^\Gamma = \big(\frac{t_x}{t_y}\big)^\Gamma,
	\]
	 and, consequently, the graphs \(h_t(\D_t) = \cD^t(h_t(\xi_t))\) and \(\D^t\) have the same law. As a result, the graph families \((\D_t:t\geq 0)\) and \((\D^t:t\geq 0)\) have the same one-dimensional marginals. However as a whole, both processes behave differently. Indeed, while the indegree of any fixed vertex \(\x\) stabilises in \(\D^t\) by Lemma~\ref{lem:equalNeighbourhoods} and Theorem~\ref{thm:neighbourhood}, the indegree of a fixed vertex in \(\D_t\) diverges to infinity, cf.~\cite{GGLM2019}. Still, the fact that both sequences have the same one-dimensional marginals is crucial in our proof of Theorem~\ref{thm:DARCMasLL}, i.e.\ DARCM appears as the local limit of the sequence \((\D_t\colon t\geq 0)\). 
	 
	 \subsection{A law of large numbers for independent vertex-edge markings} \label{sec:LLN}
	 In this section, we present a weak law of large numbers for vertex-edge markings that ultimately implies Theorem~\ref{thm:DARCMasLL}. It is based on a law of large numbers for point processes of Penrose and Yukich~\cite[Thm~2.1]{PenroseYukich2003} and its application of Jacob and Mörters to the \emph{spatial preferential attachment model}~\cite[Thm.~7]{JacobMoerters2015}. Although their proof essentially applies to our setting as well, we still give a proof for self-containment. 
	 
	Denote the torus of volume \(t\) by \(\mathbb{T}_t^d=(-t^{1/d}/2,t^{1/d}/2]^d\), endowed with the torus metric \(\d_t\). For convenience, we identify $\R^d$ with $\mathbb{T}_\infty^d$. We consider non-negative functionals $F_t(\x,\xi^t)$ acting on independent vertex-edge markings on $\mathbb{T}_t^d\times(0,1)$, $t\in(0,\infty]$ and a distinguished root vertex \(\x\in\xi^t\). We identify \(F_t(\x,\xi^t)=F_t(\x,\xi^t_{\x})\) whenever \(\x\) is no element of \(\xi^t\). That is, \(\x\) is added to \(\xi^t\) as described above. Such a functional \(F_t\) is called translation invariant, if 
    \[
    	F_t(\x,\xi_{\x}^t)=F_t\circ\theta_x(\x, \xi) \quad \text{for any }x\in \mathbb{T}_t^d,
    \]
    where the point-shift operator $\theta_x: \mathbb{T}_t^d\to \mathbb{T}_t^d$ is given by $\theta_x(z)=z-x, z\in \mathbb{T}_t^d$ and acts on vertex-edge markings by shifting only the locations of the underlying vertex locations but leaving the vertex and edge marks unchanged. Let us additionally write, in a slight abuse of notation, \(\x\in\xi^t\) for a vertex that is part of the underlying marked Poisson process, on which \(\xi^t\) is built.
	
	\begin{prop}[Law of large numbers \cite{JacobMoerters2015}] \label{prop:LLN}
			Let \((F_t:t>0)\) be a a family of non-negative functionals, where \(F_t\) acts translationally invariant on rooted vertex-edge markings on \(\mathbb{T}^d_t\times(0,1)\) and let \(F_\infty\) be a translation invariant functional for a rooted vertex-edge marking on \(\R^d\times(0,1)\). Let us assume that
			\begin{enumerate}[(i)]
				\item \(F_t(\0,\xi^t_o)\to F_\infty(\0,\xi_o)\) in probability, as \(t\to\infty\) and that
				%\item there exists \(p>1\) such that \(\sup_{t>0}\E_o [F_t(\0,\xi^t_0)^p]<\infty\).
				\item the family \((F_t(\o,\xi_o^t)\colon t\geq 0)\) is uniformly integrable. 
			\end{enumerate}
			Then, we have, in probability and in the \(L^1\) sense, that
			\begin{equation*}
				\frac{1}{t}\sum_{\x\in\xi^t}F_t(\theta_x(\x),\theta_x(\xi)) \longrightarrow \E_o[F_\infty(\0,\xi_o)],
			\end{equation*}
			as \(t\to\infty\). 
	\end{prop}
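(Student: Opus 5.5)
We follow the Penrose--Yukich strategy~\cite{PenroseYukich2003}, as adapted in~\cite[Thm.~7]{JacobMoerters2015}: first show convergence of the mean, then show the variance of the normalised sum vanishes. For the variance we reduce to bounded functionals by truncation, using the uniform integrability in (ii).

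\textbf{Step 1: the mean.} Write $S_t:=\sum_{\x\in\xi^t}F_t(\theta_x(\x),\theta_x(\xi))$. By the Mecke (refined Campbell) formula~\cite[Thm.~9.1]{LastPenrose2017} and translation invariance of $F_t$ on the torus,
\[
\tfrac1t\E[S_t]=\tfrac1t\int_{\mathbb{T}^d_t}\int_0^1\E_{(x,s)}\big[F_t((x,s),\xi^t_{(x,s)})\big]\d s\d x=\E_o[F_t(\0,\xi^t_o)].
\]
Assumptions (i) and (ii) together give convergence in $L^1$ (Vitali). Hence $\E_o[F_t(\0,\xi^t_o)]\to\E_o[F_\infty(\0,\xi_o)]$.

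\textbf{Step 2: truncation.} Fix $K>0$ and set $F^K_t:=F_t\wedge K$. By (ii), $\sup_t\E_o[(F_t-F_t^K)(\0,\xi^t_o)]\to0$ as $K\to\infty$. By the same Mecke computation,
\[
\tfrac1t\E\big|S_t-S^K_t\big|\le \sup_t \E_o[(F_t-F_t^K)(\0,\xi^t_o)].
\]
So it suffices to prove $\tfrac1t S^K_t\to\E_o[F^K_\infty(\0,\xi_o)]$ in $L^2$ for each fixed $K$, and then let $K\to\infty$. Convergence in $L^1$, and hence in probability, then follows by a standard $\varepsilon/3$ argument, with monotone convergence handling the limit side.

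\textbf{Step 3: variance for bounded $F$.} Apply the second-order Mecke formula:
\[
\tfrac1{t^2}\E[(S_t^K)^2]=\tfrac1{t^2}\E\!\sum_{\x}F^K_t(\x)^2+\tfrac1{t^2}\int\!\!\int\E_{\x,\y}\big[F^K_t(\x,\xi^t_{\x,\y})F^K_t(\y,\xi^t_{\x,\y})\big]\d\x\d\y .
\]
The diagonal term is at most $K^2/t\to0$. For the off-diagonal term, note that for $x,y$ with $\operatorname{d}_t(x,y)\to\infty$ the two values $F^K_t(\x,\cdot)$ and $F^K_t(\y,\cdot)$ become asymptotically independent. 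To see this, approximate each by $F^K_\infty$ evaluated on the Palm configurations at $x$ and at $y$, using (i). By the stabilisation behind Lemma~\ref{lem:equalNeighbourhoods}, each value is, with high probability, determined by the vertex-edge marking inside a large ball of radius $r$, up to the event that some arc enters the ball from outside. Poisson independence on disjoint balls then yields asymptotic factorisation, so the off-diagonal term converges to $(\E_o F^K_\infty)^2$. The set of pairs with $\operatorname{d}_t(x,y)\le r$ has volume $O(t)$ and thus contributes $O(K^2/t)$. Combined with Step~1 this gives $\Var(S^K_t/t)\to0$.

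\textbf{Main obstacle.} The hard part is Step~3: turning convergence in probability (i) into decorrelation of pairs of far-apart roots. The issue is that $F$ is only assumed translation invariant and convergent, not explicitly local. The first thing I would try follows Jacob--Mörters: couple $\xi^t$ with $\xi$ via the common $\overline\xi$. Adding the second Palm point $\y$ far from $\x$ changes $F_t(\x,\cdot)$ only with probability tending to $0$ as $\operatorname{d}_t(x,y)\to\infty$. This holds because the probability of an arc between $\x$ and $\y$ decays integrably in the distance, which follows from the integrability of $\rho$. I would then couple $F_t^K(\x,\xi^t_{\x,\y})$ and $F_t^K(\y,\xi^t_{\x,\y})$ with two copies built from independent markings that agree with $\xi$ near $x$ and near $y$ respectively, and use (i) together with boundedness by $K$ to control the error in expectation.
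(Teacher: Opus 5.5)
Your outline matches the paper's: the mean via Mecke, translation invariance and Vitali; truncation at $K$ controlled by (ii); the second-order Mecke formula with diagonal term $O(K^2/t)$; and factorisation of the off-diagonal term for far-apart roots. The gap is in the one substantive step, the decorrelation in Step~3.

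Both mechanisms you offer there are properties of graph-neighbourhood functionals, not consequences of the hypotheses. ``The stabilisation behind Lemma~\ref{lem:equalNeighbourhoods}'' and ``the event that some arc enters the ball from outside'' only make sense for functionals determined by a bounded graph neighbourhood. The proposition, however, concerns arbitrary translation-invariant functionals of the vertex-edge marking satisfying (i) and (ii); the paper explicitly wants to allow dependence on marks, edge lengths, or the whole graph.

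Your repair claims that adding the far Palm point $\y$ changes $F_t(\x,\cdot)$ only with small probability, because an arc between $\x$ and $\y$ is unlikely. This already fails for $2$-neighbourhood functionals, since $\y$ can attach to a neighbour of $\x$. In this generality the claim itself is false: if $(G_t)$ satisfies (i) and (ii), so does $F_t=G_t+t^{-1}\1\{\sharp\xi^t\text{ even}\}$, which changes with probability one whenever a point is added. Only closeness in probability can be expected, and that has to come from (i). Your last sentence (independent copies agreeing with $\xi$ near $x$ and near $y$, then ``use (i)'') does not explain why $F^K_t$ on such a copy is close to $F^K_t$ on $\xi^t$. That is exactly the locality issue you flagged yourself.

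The missing idea, which is what the paper uses, is that (i) itself supplies locality, because all $\xi^t$ are restrictions of one common marking:
\begin{enumerate}[(a)]
\item Fix a window size $r=r(t)\to\infty$ with $r=o(t)$; the paper uses $\mathbb{T}^d_{\sqrt[2d]{t}}$. The functional $F^K_r(\o,\xi^r_o)$ on the smaller torus depends only on the marking inside the window.
\item Both $F^K_t(\o,\xi^t_o)$ and $F^K_r(\o,\xi^r_o)$ converge in probability to $F^K_\infty(\o,\xi_o)$, so their difference tends to $0$ in $L^1$ by boundedness.
\item For uniformly placed roots farther apart than the window size (probability tending to one), the two windows are disjoint and neither contains the other root. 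The window functionals are then independent, and the off-diagonal term tends to $(\E_o F^K_\infty(\o,\xi_o))^2$.
\end{enumerate}
One still has to check that the extra Palm point does not spoil the $L^1$ comparison; the paper does this by comparing with $\xi^t_o$ conditioned on the far region being occupied. Within your own framework, you could instead localise $F^K_\infty$ by martingale convergence of $\E_o[F^K_\infty(\o,\xi_o)\mid\text{marking in }B_r]$. Either way, some such argument must replace the graph-stabilisation step.
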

	\begin{proof}
			%We assume without loss of generality that $d=1.$ 
            Let us denote  
			\[
				\overline{F}_t:= \tfrac{1}{t}\sum_{\x\in\xi^t} F_t(\theta_x(\x),\theta_x(\xi)).
			\]  
			By Properties~(i) and~(ii), we have \(F_t(\o,\xi^t_o)\to F_\infty(\o,\xi_o)\) in \(L^1\), and using Campbell's formula~\cite[Prop.~2.7]{LastPenrose2017} together with translation invariance, we further have \(\E \overline{F}_t=\E_o[F_t(\0, \xi^t_0)]\). Hence, 
			\[
				\lim_{t\to\infty} \E \overline{F}_t = \E_o[F_\infty(\0,\xi_0)]<\infty.
			\]
			Let us next assume that the \(F_t\) form a uniformly bounded sequence, which particularly implies that the \(F_t\) are uniformly integrable in \(L^p\) for any \(p>2\). We have for the second moment 
			\[
				\E\overline{F}_t^2 = \frac{1}{t^2}\E \Big[\sum_{\x\in\xi^t} F_t(\theta_x(\x),\theta_x(\xi))^2\Big] +\frac{1}{t^2} \E \Big[\sum_{\x\neq \y \in\xi^t} F_t(\theta_x(\x),\theta_x(\xi))F_t(\theta_y(\y),\theta_y(\xi))\Big].
			\]	
			As all second moments exist (under our boundedness assumption), the first term on the right-hand side converges to zero by the same calculation performed for the first moment, while the second term reads
			\[
				\begin{aligned}
					\int_{\bbT^d_t} \frac{\d x}{t} 
					& 
						\int_0^1 \d s \int_{\bbT_t^d}\frac{\d y}{t} \int_0^1 \d t \, \E_{\x,\y}[F_t((o,s),\theta_x(\xi_{\x,\y}))F_t((o,t),\theta_y(\xi_{\x,\y}))]
					\\ &
						= \E\big[F_t((o,S),\theta_X(\xi_{\X,\Y}))F_t((o,T),\theta_Y(\xi_{\X,\Y}))\big],
				\end{aligned} 
			\]	
			where \(\X=(X,S)\) and \(\Y=(Y,T)\) are independently chosen uniformly from \(\bbT^d_t\times(0,1)\). Observe that \(\theta_X(\xi_{\X,\Y}))\) is the same as \(\theta_X(\xi_{\X})) \cup \{(Y-X,T)\}\), where the latter denotes the vertex-edge marking where the vertex \(\{(Y-X,T)\}\) has been added. Consider the event \(\cE_t=\{\d_t(X,Y)>\sqrt[2d]{t}\}\), which holds with probability arbitrarily close to one for sufficiently large \(t\). By boundedness of \(\sup_t F_t\), we hence have 
			\[
				\E\big[F_t((o,S),\theta_X(\xi_{\X,\Y}))F_t((o,T),\theta_Y(\xi_{\X,\Y})) \1_{\cE_t^\mathrm{c}}\big] \longrightarrow 0,
			\]
			as \(t\to\infty\) and we work conditionally on the event \(\cE_t\) in the following. We introduce further the event \(\cG_t=\{\exists \z\in\xi^t:\d_t(o,z)>\sqrt[2d]{t}\}\), which holds with probability arbitrarily close to one for large enough \(t\), as well. Let us write \(F'_{\sqrt[2d]{t}}(S,\theta_X(\xi_{\Y}))\) for the restriction of the functional \(F_t(S,\theta_X(\xi_{\Y}))\) to the vertices in \(\bbT_{\sqrt[2d]{t}}^d\times(0,1)\). We make the following important observations:
			\begin{itemize}
				\item Conditioned on \(\cE_t\), the restriction of \(\theta_X(\xi_{\X,\Y})\) to \(\bbT_{\sqrt[2d]{t}}^d\times(0,1)\) coincides with the same restriction of \(\theta_X(\xi_{\X})\).
				\item The law of \(\theta_X(\xi_{\X,\Y})\), given \(\cE_t\), equals the law of \(\theta_X(\xi_{\X})\), given \(\cG_t\).
			\end{itemize}
			Combined we observe
			\begin{equation*}
				\begin{aligned}
					\E\big[
					&
						|F_t((o,S), \theta_X(\xi_{\X,\Y}))-F'_{\sqrt[2d]{t}}((o,S),\theta_X(\xi_{\X}))| \,\big| \, \cE_t\big]
					\\ &
						= \E\big[|F_t((o,S),\theta_X(\xi_{\X,\Y}))-F'_{\sqrt[2d]{t}}((o,S),\theta_X(\xi_{\X,\Y}))| \,\big| \, \cE_t\big]	
					\\ &
						= \E\big[|F_t((o,S),\theta_X(\xi_{\X}))-F'_{\sqrt[2d]{t}}((o,S),\theta_X(\xi_{\X}))| \,\big| \, \cG_t\big]		
					\\ &
						\longrightarrow 0,	
				\end{aligned}
			\end{equation*}
			as \(t\to\infty\), by Property~(i) and bounded convergence. In the same way, we obtain
			\[
				\E\big[|F_t((o,T),
						\theta_Y(\xi_{\X,\Y}))-F'_{\sqrt[2d]{t}}((o,T),\theta_Y(\xi_{\Y}))| \,\big| \, \cE_t\big] \longrightarrow 0.
			\]
			Hence, using the boundedness of \(\sup_t F_t\) once more, we obtain
			\[
				\begin{aligned}
					\E\big[
					&
						F_t((o,S), \theta_X(\xi_{\X,\Y}))F_t((o,T),\theta_Y(\xi_{\X,\Y})) \, \big| \,  \cE_t \big] 
					\\ &	
						= \E\big[F'_{\sqrt[2d]{t}}((o,S),\theta_Y(\xi_{\X}))F'_{\sqrt[2d]{t}}((o,T),\theta_Y(\xi_{\Y})) \, \big| \cE_t\big] + o(1) 
					\\ &
						= \E\big[F'_{\sqrt[2d]{t}}((o,S),\theta_Y(\xi_{\X})) \, \big| \, \cE_t\big] \E\big[F'_{\sqrt[2d]{t}}((o,T),\theta_Y(\xi_{\Y})) \, \big| \cE_t\big] +o(1), 
					\\ &
						\longrightarrow \E_o[F_\infty(\0,\xi_0)]^2
				\end{aligned}
			\]
			as the two \(F'_{\sqrt[2d]{t}}\) functionals are conditionally independent, given \(\cE_t\), and both converge in probability to \(F_\infty(\0,\xi)\) by Property~(i). Hence, the convergence in the last line is a consequence of bounded convergence. This then proves convergence of \(\overline{F}_t\) to \(\E_o[F_\infty(\0,\xi_0)]\) in \(L^2\) for uniformly bounded functionals \(F_t\) that satisfy Property~(i). 
			
			Finally, for arbitrary functionals \(F_t\) satisfying all assumptions, we introduce the uniformly bounded functionals \(F_t^k:= F_t\wedge k\). Clearly, \(F_t^k\) fulfils Property~(i) and therefore \(\overline{F^k_t}\to \E_o F_\infty^k(\0,\xi_o)\) in \(L^2\). Further,
			\[ 				
				\begin{aligned}
					0 
					& 
						\leq \E\Big[\frac{1}{t}\sum_{\x\in\cX^t}\big(F_t(\theta_x(\x),\theta_x(\xi))-F_t^k(\theta_x(\x),\theta_x(\xi))\big)\Big] 
					%\\&
						= \E_o\big[F_t(\0,\xi_o^t)-F_t^k(\0,\xi_o^t)\big] \longrightarrow 0,
				\end{aligned}
			\]  
			uniformly as \(k\to\infty\) by uniform integrability. Hence, \(\overline{F}_t\) converges in \(L^1\) to
			\[
				\lim_{k\to\infty} \E_o F^k_\infty(\0,\xi_o) = \E_o F_\infty(\0,\xi_o),
			\]
			which concludes the proof. 
		\end{proof}
		
		The LLN immediately implies that DARCM is the local limit of the directed age-based preferential attachment sequence.
		 
		\begin{proof}[Proof of Theorem~\ref{thm:DARCMasLL}]
			Note first, that \(\sharp\scrV(\D_t)\sim t\), almost surely, by the law of large numbers for Poisson processes. We therefore may replace the denominator \(\sharp\scrV(\D_t)\) in the statement of Theorem~\ref{thm:DARCMasLL} by \(t\). Let \(H\) be a non-negative functional acting on a bounded graph neighbourhood of the root of a rooted digraph that fulfils the uniform integrability condition of Theorem~\ref{thm:DARCMasLL}. Since \(\D_t\) and \(\D^t=\cD^t(\xi^t)\) have the same law, it suffices to show~\eqref{eq:DARCMasLL} for \(\D^t\). To this end, choose \(F_t(\x,\xi^t_{\x})=H(\x,\cD^t(\xi^t_{\x}))\). Property~(i) of Proposition~\ref{prop:LLN} is a direct consequence of Lemma~\ref{lem:equalNeighbourhoods} and the fact that \(H\) only depends on a finite graph neighbourhood of the root, while Property~(ii) is satisfied due to the uniform integrability assumption on \(H\). The proof concludes by applying Proposition~\ref{prop:LLN}.
		\end{proof}

		\begin{remark}
			While Theorem~\ref{thm:DARCMasLL} is formulated in the classical notion of uniformly integrable , its proof uses the LLN for vertex-edge markings, which allows for stronger statements. Specifically, one may apply functionals that depend on the whole graph and still apply the limit as long as the convergence assumption in (i) is satisfied. Moreover, Proposition~\ref{prop:LLN} allows that \(F_t\) depends on the lengths of edges or vertex marks in the graph \(\D^t\), so one may infer results for rescaled edge lengths or rescaled birth times in \(\D_t\). The reason why we achieve stronger limit results than typically possible in random graph theory lies in the fact that the underlying geometry provides more structure than the usual local topology of sparse random graphs. In the following, we mainly work with the law of large numbers rather than the local limit theorem. The reason is simply that formulating the assumptions via a Palm measure keeps notation more concise. However, we do not apply it to any situation where the stronger formulation is required and all proofs can be performed, \emph{mutatis mutandis}, using Theorem~\ref{thm:DARCMasLL}. Situations where the stronger law of large numbers actually is of importance are discussed in Section~\ref{sec:outlook}.		
		\end{remark}

\subsection{Degree distribution and sparsity} \label{sec:proofDeg}
In this section, we give the proofs for the degree distribution stated in Section~\ref{sec:degrees} and show sparsity of the digraph family \((\D_t)_t\).

\begin{proof}[Proof of Theorem~\ref{thm:neighbourhood}.]
The incoming arcs of \(\0\) are all edges to younger neighbours of \(\o\) in \(\G=\D^{\beta,\gamma,\delta,1}(\xi_o)\) plus the edges to older neighbours where a conversely oriented edge has been added. From~\cite[Lemma 4.4]{GGLM2019}, the number of younger neighbours in \(\G\) from \((o,u)\) (i.e.\ the root's mark is given \(U_o=u\)) is heavy-tailed with power-law exponent \(1+1/\gamma\). The older incoming neighbours of \((o,u)\) form a Poisson process on \(\R^d\times (0,u)\) with intensity
\[
    \pi(u/s)\rho(\beta^{-1}s^{\gamma}u^{1-\gamma}|x|^d) \ \mathrm{d}x \ \mathrm{d} s.
\]
Since \(\pi(u/s)\leq 1\), the number of such neighbours is at most Poisson distributed with parameter \(\nicefrac{\beta\omega_d\delta}{(1-\gamma)(\delta-1)}\)~\cite[Proposition 4.1(c)]{GGLM2019}. Hence, the indegree of \(\0\) in \(\mathscr{D}\) is bounded from below by the number of younger neighbours of \(\0\) in \(\G\) and from above by the number of younger neighbours of \(\0\) in \(\G\) plus an independent Poisson distributed random variable. Therefore, the heavy-tailed distributed number of young neighbours in \(\G\) dominates, proving~(a).

Similarly, the outgoing neighbours of \(\0\) in \(\mathscr{D}\) are the older neighbours of \(\0\) in \(\G\) plus the younger ones where an additional conversely oriented edge has been added. The number of the first type is again Poisson distributed with parameter \(\nicefrac{\beta\omega_d\delta}{(1-\gamma)(\delta-1)}\), independent of the root's mark. For fixed mark \(U_0=u\), the latter form a Poisson process on \(\R^d\times(u,1)\) with intensity
\[
    \pi(s/u)\rho\big(\beta^{-1}u^\gamma s^{1-\gamma}|x|^d\big) \ \mathrm{d}x \ \mathrm{d}s.
\]
The expected number of such vertices is hence
\begin{equation}\label{eq:degCalc1}
    \begin{aligned}
        \int_u^1 \mathrm{d}s \, \pi(u/s) \int_{\R^d}\mathrm{d}x \ \rho(\beta^{-1}u^{\gamma}s^{1-\gamma}|x|^d) 
        & 
        	=\beta u^{\Gamma-\gamma} \int_u^1 s^{\gamma-\Gamma-1} \d s \int_{\R^d} (1\wedge |x|^{-d\delta}) \d x
        \\ &
        	\asymp \frac{\omega_d \delta}{\delta-1}\cdot \frac{\beta}{|\gamma-\Gamma|} u^{\Gamma-\gamma} (1\vee u^{\gamma-\Gamma}) 
        \\&
        	=\frac{\omega_d \delta}{\delta-1}\cdot \frac{\beta}{|\gamma-\Gamma|} (1\vee u^{-\gamma+\Gamma}),
    \end{aligned}
\end{equation}
where we assume for now that \(\gamma\neq \Gamma\). Hence, if \(\Gamma>\gamma\), this gives a Poisson distribution with parameter \(\Theta(\frac{\omega_d \delta}{\delta-1}\,\frac{\beta}{|\gamma-\Gamma|})\). Since the Poisson points with birth time smaller than \(u\) are independent of those with birth time greater than \(u\), the claim follows by the convolution property of the Poisson distribution. If \(\Gamma<\gamma\), then the outdegree is mixed Poisson distributed and, writing \(c=\tfrac{\omega_d \delta}{\delta-1}\cdot\tfrac{\beta}{\gamma-\Gamma}\), we have
\begin{equation*}
    \begin{aligned}
        \P_{o}\{\sharp\scrN^{\text{out}}=k\} 
        & 
        	\asymp \int\limits_0^1 \exp\big(c u^{\Gamma-\gamma}\big) \frac{(c u^{\Gamma-\gamma})^k}{k!} \mathrm{d} u 
        	\asymp \frac{1}{\Gamma(k+1)}\int_0^\infty e^{-\lambda} \lambda^{k-\nicefrac{1}{(\gamma-\Gamma)}-1} \mathrm{d}\lambda 
        \\ &	
            \asymp k^{-1-1/(\gamma-\Gamma)},
    \end{aligned}
\end{equation*}
as \(k\uparrow\infty\), using Stirling's formula in the last step. Finally, if \(\gamma=\Gamma\), then the integral \eqref{eq:degCalc1} instead calculates to 
\begin{equation}\label{eq:degCalc2}
	\frac{\omega_d \delta}{\delta-1}\beta \log(1/u).
\end{equation} 
Therefore, the outdegree is again mixed Poisson distributed and
\[
	\begin{aligned}
		\P_{o}\{\sharp\scrN^{\text{out}}=k\}
		&
			\asymp 	\int_0^1 \frac{u\log(1/u)^k}{k!}\d u.
	\end{aligned}
\]
Integration by parts yields
\[
	\int_0^1 \frac{u\log(1/u)^k}{k!}\d u = \frac{1}{2} \int_0^1 \frac{u\log(1/u)^{k-1}}{(k-1)!} \d u
\]
The proof concludes by repeating this step \(k-1\) times. 
\end{proof}

We conclude this section by proving sparsity for \((\D_t)_t\).

\begin{proof}[Proof of Theorem~\ref{thm:sparse}.]
	Note that it suffices to show the result for the family \((\D^t)_t\). Let \(F(\x,\xi^t)\) be the outdegree of vertex \(\x\) in \(\cD^t(\xi_{\x})=\D^t_{\x}\). Then, by Theorem~\ref{thm:neighbourhood} and Lemma~\ref{lem:equalNeighbourhoods}, we have
	\[
		\sup_{t}\E_o[F(\o,\xi_{\o})^p]<\infty 
	\]  
	for some small enough \(p>1\), implying uniform integrability. Therefore the LLN, Proposition~\ref{prop:LLN}, applies and we infer
	\[
		\frac{1}{\sharp\scrV(\D^t)}\sum_{\x\in\D^t} F(\theta_x\x, \theta_x\xi^t) \longrightarrow \E_o[F(\o,\xi_o)] = \E_o\big[\sharp\scrN^{\text{out}}\big],
	\]
	in probability, as \(t\to\infty\), where we used that \(\sharp\scrV(\D^t)\sim t\), almost surely. 
\end{proof}

%%%%%%%%%%%%%%%%%%%%%%%%%%%
%%%%%% Clustering %%%%%%%%%
%%%%%%%%%%%%%%%%%%%%%%%%%%% 
\subsection{Proof of clustering results} \label{sec:proofs_clust}
In this section, we mainly focus on the results about interest clustering. The average friend clustering results can be obtained in the same way as the average triangle count in~\cite[Thm.~5.1]{GGLM2019} with the probability of existence of an edge being replaced by the existence of a double-arc. In the same vein, the proof for the global friend clustering coefficient applies. However, as it is more dependent on the new degree distributions, we give the proof for completeness.
\begin{proof}[Proof of Theorem~\ref{thm:globFriendClust}.]
	Let us count the number of open and closed triangles separately (triangles are always considered to be bidirectional during the proof) and we may work in the rescaled graph \(\D^t=\cD^t(\xi^t)\). Let \(F(\x,\xi^t)\) be the number of triangles in \(\D^t_{\x}\) that have their youngest vertex in \(\x\). This number is bounded by the square of the number of all outgoing arcs that have been formed when \(\x\) was born. The number of such arcs (without being squared) is Poisson distributed by~\cite[Prop.~4.1]{GGLM2019} and the proof of Theorem~\ref{thm:neighbourhood}, respectively. Therefore, the law of large numbers, Proposition~\ref{prop:LLN} applies and yields \(t^{-1} \sum F(\theta_x(\x),\theta_x(\xi^t)))\to \E_o[F(o,\D_o)]\), in probability. 
	
	Similarly, let \(G(\x,\xi^t)\) be the number of open triangles in \(\D^t\), in which both double-arcs are incident to \(\x\). Let \(\cN_>^{\text{out}}(\x)\) be the number of out-neighbours older than \(\x\) (which are all out-arcs formed at the birth of \(\x\)), and let \(\cN_<^{\text{out}}(\x)\) the number of out-neighbours older than \(\x\) (which are all reciprocal arcs formed). There are now three possibilities: either \(\x\) is the youngest vertex, the middle vertex, or the oldest vertex in the open triangle. Therefore, 
	\[
		\begin{aligned}
			\tfrac{1}{2}\cN_<^{\text{out}}(\x)(\cN_<^{\text{out}}-1) \leq G(\x,\xi^t)
			&
				\leq \cN_>^{\text{out}}(\x)^2 + \cN_>^{\text{out}}(\x)\cN_<^{\text{out}}(\x) + \cN_<^{\text{out}}(\x)^2.
		\end{aligned}
	\]  
	By Theorem~\ref{thm:neighbourhood} and its proof, \(\cN_>^{\text{out}}(\x)^{2}\) as well as \(\cN_>^{\text{out}}(\x)\cN_<^{\text{out}}(\x)\) are uniformly integrable, while \(\cN_<^{\text{out}}(\x)^{2}\) is uniformly integrable if and only if \(1/2>\gamma-\Gamma\). In that case, we may again apply the law of large numbers and infer \(t^{-1} \sum G(\theta_x(\x),\theta_x(\xi^t)))\to \E_o[G(o,\D_o)]\), in probability. Combined, we deduce that the global clustering coefficient converges in probability towards a positive constant. In the other case, \(1/2\leq \gamma-\Gamma\), we apply the law of large numbers to the bounded functionals \(G\wedge k\) and infer \(t^{-1} \sum G(\theta_x(\x),\theta_x(\xi^t)))\to \infty\) by sending \(k\to\infty\). This yields \(c^{\textup{fc}}_{\textup{glob}}(\D_t)\to 0\), in probability as \(t\to\infty\). 
\end{proof} 

Let us turn to the more involved proofs for the interest clustering coefficients. We start with the proof for the global version, i.e.\ Theorem~\ref{thm:globIC}.

\begin{proof}[Proof of Theorem~\ref{thm:globIC}]
As the key ingredient of our proof, we derive orders for the expected number of open and closed bow-ties containing the origin \(\o=(o,t_o)\) (recall Figure~\ref{fig:interest}) at time \(t\) in the rescaled graph \(\D^t_{\o}\) on \(\bbT^d_t\times(0,1)\). Let us write \(\mu_t^o(t_o)\) and \(\mu_t^c(t_o)\) for the expected number of open and closed bow-ties containing \(\o\).  Let us additionally define the truncated expected number of open bow-ties
\begin{equation}\label{eq:defS}
    S(t_o) \;:=\; \E_{\o}\sum_{\substack{\mathbf{u},\,\mathbf{v},\,\mathbf{w}\,\in\,\D_t\\ t_u,t_v,t_w>1/(t\log t)}}
    \1\big\{\0\to\mathbf{v}\text{ in }\D_t\big\}\;
    \1\big\{\0\to\mathbf{u}\text{ in }\D_t\big\}\;
    \1\big\{\mathbf{w}\to\mathbf{v}\text{ in }\D_t\big\},
\end{equation}
where we discard vertices with marks lower than $1/(t\log t)$. Using the three-fold Mecke equation and the facts that \(\bbT_t^d\) has volume \(t\) and that all indicators are bounded by \(1\), we obtain 
\begin{equation*}
    \mu_t^o(t_o) \asymp S(t_o) =\int\limits_{(\mathbb{T}_t^d\times(\nicefrac{1}{t\log t},1))^3} \!\!\! \P_{\o,\u}(\0\to\mathbf{u})\;\P_{\o,\v}(\0\to\mathbf{v})\;\P_{\w,\v}(\mathbf{w}\to\mathbf{v})\d\mathbf{u}\d\mathbf{v}\d\mathbf{w},
\end{equation*}
where $d\mathbf{u}=\d u \d t_u$ etc. Similarly, we define 
\begin{equation} \label{eq:defSclosed}
S^c(t_o)
:=
\E_{\0}\!\!\sum_{\substack{\u,\v,\w\in\scrX_t\\ t_u,t_v,t_w\ge 1/(t\log t)}}
\1\{\0\to\u\}\,\1\{\0\to\v\}\,\1\{\w\to\u\}\,\1\{\w\to\v\},
\end{equation}
and obtain 
\[
    \mu_t^c \asymp \int\limits_{(\bbT_t^d\times(\nicefrac{1}{t\log t},1))^3}\!\!\!
\P_{\o,\u}(\0\to\u)\,\P_{\o,\v}(\0\to\v)\,\P_{\w,\u}(\w\to\u)\,\P_{\w,\v}(\w\to\v)\d \u \d \v\d \w.
\]
In particular, the asymptotic behaviour of the expected number of both, open and closed bow-ties are driven by those vertices born after \(1/(t\log t)\) (in fact with high probability no vertex was born before) and it suffices to consider \(S(t_o)\) and \(S^c(t_o)\), respectively, which is done in the Lemmas~\ref{lem:orderSopen} and~\ref{lem:orderSclosed}, respectively. 

Let us first assume \(\gamma<1/2\). Then, by Lemma~\ref{lem:orderSopen}, we have that the expected number of open bow-ties \(\mu_t^o:= \int_0^1\mu_t^o(t_o)\d t_o\), incident to \(\o\) in the rescaled graph \(\D_o^t\), is uniformly bounded (in \(t\)) and the same is true for \(\mu_t^c:= \int_0^1\mu_t^c(t_o)\d t_o\). Let us consider the functionals
\[
F^o(\x, \D^t)=\sum_{\u,\v\in\D^t}\1\{\x\to \u, \x\to \v\}\sum_{y\in\D^t}\1\{\y\to\u \text{ or } \y\to\v\},
\] 
and
\[ 
    F^c(\x,\D^t)=\sum_{\u,\v\in\D^t}\1\{\x\to \u, \x\to \v\}\sum_{y\in\D^t}\1\{\y\to\u, \y\to\v\}.
\]
We aim to showing the positivity of the limiting global interest coefficient by using the simple expansion
\begin{equation*}
\frac{\sum_{\x}F^c_t(\x,\D^t) }{\sum_{\x}F^o_t(\x,\D^t)}=\frac{\sum_{\x}F^c_t(\x,\D^t) }{\sharp\mathscr{V}(\D^t)}\;\frac{\sharp\mathscr{V}(\D^t)}{\sum_{\x}F^o_t(\x,\D^t)}.
\end{equation*}
Consider \(F^*(\o,\D^t_o)\) for \(*\in\{o,c\}\). As this quantity only depends on a bounded graph-neighbourhood of the root, we have \(F^*(\o,\D^t_o)\to F^*(\o,\D_o)\), almost surely by Lemma~\ref{lem:equalNeighbourhoods}.  
The corresponding $L^1$-convergence, or equivalently uniform integrability, can then simply be checked by convergence of the first moments, which in turn can be checked by uniform boundedness of the first moments as well as the existence of the limiting expectation. On the first condition, we have already commented above while the second condition follows from the same calculation. Hence,
\[
    \frac{\sum_{\x}F^c(\x,\D^t) }{\sum_{\x}F^o(\x,\D^t)} \longrightarrow \frac{\E_o F^c(\o,\D_o)}{\E_o F^o(\o,\D_o)}>0,
\]
in probability and \(L^1\), as \(t\to\infty\), by Proposition~\ref{prop:LLN}. 

Consider now the case \(\gamma\geq 1/2\). In that case, we always have \(\mu_t^o\to\infty\). However, the Lemmas~\ref{lem:orderSopen} and~\ref{lem:orderSclosed} imply that \(\mu_t^c/\mu_t^o \to 0\) even in the cases where \(\mu_t^c\to\infty\), as well. Recall that we work in the rescaled graph \(\D^t\) and observe that a simple Poisson calculation yields that the event
	\[
		\cE_t:=\{\text{the oldest vertex in }\D^t\text{ was born after }1/(t\log(t))\}
	\]
	holds with high probability, i.e.\ with probability converging to one as \(t\to\infty\). We therefore work on \(\cE_t\) in the following. Let us define
	\begin{equation} \label{eq:defY_t}
		Y_t :=\sum_{\substack{\x,\u,\v,\w\in\mathcal D_t\\ t_{\x},t_{\u},t_{\v},t_{\w}>1/(t\log t)}} \1\{\x\to\u\}\,\1\{\x\to\v\}\,\1\{\w\to\v\},
	\end{equation} 
	and 
	\[
		X_t :=\sum_{\substack{\x,\u,\v,\w\in\mathcal D_t\\ t_{\x},t_{\u},t_{\v},t_{\w}>1/(t\log t)}} \1\{\x\to\u\}\,\1\{\x\to\v\}\,\1\{\w\to\u\}\,\1\{\w\to\v\}.
	\]
Put differently, $Y_t$ counts \emph{open} bow-ties and $X_t$ counts \emph{closed} bow-ties under the birth-time truncation. Clearly, $0\le X_t\le Y_t$ pointwise. Moreover, the denominator in the definition of the global interest clustering coefficient dominates $Y_t$ (since $\1\{\w\to\u\ \text{or}\ \w\to\v\}\ge \1\{\w\to\v\}$), and hence (on \(\cE_t\))
\[
	0\le c^\mathrm{ic}_\mathrm{glob}(\mathcal \D^t)\le \frac{X_t}{Y_t}, \qquad \text{with the convention } \frac{X_t}{Y_t}:=0\ \text{on }\{Y_t=0\}.
\]
Using Mecke's equation,
\[
	\E Y_t = t\int_{1/(t\log t)}^1 S(t_o)\,\d t_0, \qquad \E X_t = t\int_{1/(t\log t)}^1 S^c(t_o)\,\d t_0,
\]
and therefore our previous observations imply \((\E X_t)/(\E Y_t)\to 0\).
Fix $\varepsilon\in(0,1)$. Splitting according to the event $\{Y_t\ge \varepsilon\,\E Y_t\}$ that occurs \emph{with high probability} by Lemma~\ref{lem:open-bowtie-conc} and using $X_t/Y_t\leq 1$ yields
\begin{equation*} %\label{eq:ratio-reduction}
	\E\Big[\frac{X_t}{Y_t}\Big] \le \frac{\E X_t}{\varepsilon\,\E Y_t} + \P\big(Y_t<\varepsilon\,\E Y_t\big)=o(1),
\end{equation*}
by Lemma~\ref{lem:open-bowtie-conc} and the previous observations. Consequently, for any \(\varepsilon>0\),
\[
	\begin{aligned}
		\P\big(c^{\textup{ic}}_{\textup{glob}}(\D^t)>\varepsilon\big) \leq \P(\cE_t^\mathrm{c}) + \P(X_t/Y_t>\varepsilon) \longrightarrow 0,
	\end{aligned}
\]
as \(t\to\infty\). This concludes the proof. 
\end{proof}

We close this clustering-proof section with the results about the average interest clustering coefficient.

\begin{proof}[Proof of Theorem~\ref{thm:locIC}.]
We treat the cases \(\gamma<1/2\) and \(\gamma\geq 1/2\) separately. 
\paragraph{Case {$\boldsymbol{\gamma<1/2}$}.}
We make use of our local limit structure. Consider a representation of \(c^\text{ic}_\text{av}(\D_t)\) from the viewpoint of a typical vertex. Write
\[
	\sharp\scrI_t = \sum_{\x\in \D_t}\sum_{\y\in\D_t} \1_{\{(\x,\y)\in\scrI_t\}} =: \sum_{\x\in\D_t} f_{\scrI}(\x, \D_t), \quad \text {where }f_{\scrI}(\x,\D_t):=\sum_{\y\in\D_t}\1_{\{\{\x,\y\}\in\scrI_t\}}.
\]	
We may further write
\[
	\begin{aligned}
		\sum_{(\x,\y)\in\scrI_t} c^\text{ic}((\x,\y),\D_t) 
		&
			= \sum_{\x\in\D_t} f_c (\x, \D_t), \quad \text{ where } f_c(\x,\D_t):= \sum_{\y\in\D_t } c^{\text{ic}}((\x,\y),\D_t),
	\end{aligned}
\]
since \(c^{\text{ic}}((\x,\y),\D_t)=0\), if \((\x,\y)\not\in\scrI_t\). We have 
\[
f_c(\x,\D_t)\leq f_{\scrI}(\x, \D_t)\leq \mathbf{D}^{(2)}(\x,\D_t),
\]
where the latter random variable counts all vertices at distance two of $\x$ if edge orientation is disregarded. If $\gamma<1/2$, then the $\mathbf{D}^{(2)}(\x,\D_t)$ are uniformly integrable by \cite[Prop.~4.1 \& Lem.~4.4]{GGLM2019}. Consequently, we have by Theorem~\ref{thm:DARCMasLL},
\[
	\frac{\sharp\D_t}{\sharp\scrI_t} \longrightarrow \frac{1}{\E_o\big[\1{\{\sharp\scrN^\text{out}\geq 2\}} \sharp\big\{\y\colon \scrN^\text{out}\cap\scrN^\text{out}(\y)\neq \emptyset\big\}\big]}
\]
and 
\[
	\frac{1}{\sharp \D_t} \sum_{\x\in\D_t} f_c(\x,\D_t) \longrightarrow \E_o\Big[\1{\{\sharp\scrN^\text{out}\geq 2\}}\sum_{\y\in\D}c^\text{ic}((\o,\y),\D_o)\1{\big\{\scrN^\text{out}\cap \scrN^\text{out}(\y)\neq \emptyset\big\}}\Big],
\]
in probability as \(t\to\infty\), where we used that \(c^\text{ic}((\o,\y),\D_o)>0\) implies \(\scrN^\text{out}\cap \scrN^\text{out}(\y)\neq \emptyset\) as well as \(\sharp\scrN^\text{out}\geq 2\). Combining both result finishes the proof for $\gamma<1/2$.

\paragraph{Case \(\boldsymbol{\gamma\geq 1/2}\).} 
Throughout this part of the proof, the sums over pairs of vertices are understood to run over \emph{ordered pairs of distinct vertices}. We may still work in the rescaled graph \(\D^t\) under the birth-time truncation at \(1/(t\log t)\). Write
\begin{equation}\label{eq:defMN}
	\bfM_t:=\sum_{\x\in \D^t} f_\scrI(\x,\D^t),
		\qquad
	\bfN_t:=\sum_{\x\in \D^t} f_c(\x,\D^t),
\end{equation}
so that $\mathrm{c}^{\mathrm{ic}}_{\mathrm{av}}(\D^t)=\bfN_t/\bfM_t$ on the event $\{\bfM_t>0\}$. Note that $\E[\bfM_t]\to\infty$ since \(\gamma\geq 1/2\) by Lemma~\ref{lem:orderMN}. Thus, $\P(\bfM_t=0)\to0$ and we may work exclusively on the complement. We now argue as in the proof of Theorem~\ref{thm:globIC}. First, by Lemma~\ref{lem:orderMN}, we have \(\E \bfN_t/\E\bfM_t=o(1)\).
Secondly, a second-moment argument, analogous to Lemma~\ref{lem:open-bowtie-conc} (expanding $\bfM_t^2$ and applying the second-order Mecke formula, with overlap configurations bounded using the same open/closed bow-tie estimates), yields $\Var(\bfM_t)=o\big(\E[\bfM_t]^2\big)$ and hence, in probability, \({\bfM_t}/{\E[\bfM_t]}\to1\). Therefore, for every $\varepsilon>0$,
\[
	\P({\bfN_t}/{\bfM_t}>\varepsilon) \leq  \P(\bfM_t\leq \E[\bfM_t]/2) + \frac{2\,\E[\bfN_t]}{\varepsilon\,\E[\bfM_t]}
%\leq \P\!\left(M_t\le \frac12\E[M_t]\right)
%+
%\frac{4\,\E[Z_t]}{\varepsilon\,\E[M_t]}
\longrightarrow 0,
\]
as \(t\to\infty\), thus $\mathrm{c}^{\mathrm{ic}}_{\mathrm{av}}(\D^t)\to 0$ in probability, for $\gamma\geq 1/2$. This concludes the proof.
\end{proof}

%%%%%%%%%%%%%%%%%%%
%%%%%% Perc %%%%%%%
%%%%%%%%%%%%%%%%%%%

\subsection{Percolation proofs}\label{sec:proof_perc}

In this section, we give the proofs of Section~\ref{sec:perc}. The main ideas are largely inspired by~\cite{GLM2021}. There, it is shown that a promising strategy to build an (undirected) infinite path in \(\G\) (i.e.\ \(\Gamma=0\)) for all positive intensities is to connect powerful or old vertices via young connectors whenever \(\gamma>\nicefrac{\delta}{(\delta+1)}\). This strategy gets harder in our directed setting because it is harder to reach a young connector from an old vertex with an arc than it has been in the undirected setting. Our first result therefore concerns the probability, that two relatively old vertices are connected via a young connector. 

In accordance with~\cite{GLM2021}, we write \(\x \overset{2}{\underset{\x,\y}{\rightsquigarrow}}\y\) for the event that \(\x\) is connected to \(\y\) by a directed path of length \(2\) where the intermediate vertex is younger than both \(\x\) and \(\y\). 

\begin{lemma}[Two-connection-lemma] \label{lem:twoConnect}
Let \(\D\) be the directed age-dependent random connection model with parameter \(\beta>0,\delta>1, \gamma\in(0,1)\) and \(\Gamma\geq 0\).  
\begin{enumerate}[(a)]
    \item Assume \(\gamma<\nicefrac{(\delta+\Gamma)}{(\delta+1)}\). Let \(\x=(x,t)\) and \(\y=(y,s)\) be two given vertices with \(s<t\) that further satisfy \(|x-y|^d\geq \beta s^{-\gamma}t^{\gamma-1}\). Then we have
        \begin{equation}\label{eq:twoConnectSub}
           \P_{\x,\y}\Big\{\x \overset{2}{\underset{\x,\y}{\rightsquigarrow}}\y\Big\}\leq \E_{\x,\y}\big[\sharp\{\z=(z,u):u> t\text{ and }\x\rightsquigarrow\z\rightsquigarrow\y\}\big] \leq \beta C \; \P_{\x,\y}\{\x\rightsquigarrow\y\}
        \end{equation}
    as well as
        \begin{equation}\label{eq:twoConnectSub2}
           \P_{\x,\y}\Big\{\y\overset{2}{\underset{\x,\y}{\rightsquigarrow}}\x\Big\}\leq \E_{\x,\y}\big[\sharp\{\z=(z,u):u> t\text{ and }\y\rightsquigarrow\z\rightsquigarrow\x\}\big] \leq \beta C \; \P_{\x,\y}\{\y\rightsquigarrow\x\},
        \end{equation}
    where in both cases \(C=\tfrac{2^{d\delta+1}\omega_d \delta}{d(\delta-1)(\delta(1-\gamma)+\gamma-\Gamma)}\).
    \item If \(\gamma>\nicefrac{(\delta+\Gamma)}{(\delta+1)}\), we choose the two constants
        \[
            \alpha_1\in\big(1,\tfrac{\gamma-\Gamma}{\delta(1-\gamma)}\big) \text{ and then }\alpha_2\in\big(\alpha_1, \tfrac{\alpha_1(\gamma\delta-1)+\gamma-\Gamma}{\delta-1}\big).
        \]
    Let \(\x=(x,t)\) be a given vertex with \(t<\nicefrac{1}{2}\) and define the event 
    \[
        \cE(\x)=\Big\{\exists\y=(y,s):s<t^{\alpha_1}, |x-y|^d < t^{-\alpha_2} \text{ and } \x \overset{2}{\underset{\x,\y}{\rightsquigarrow}}\y \Big\}.
    \]
    For all \(\beta>0\) there exists some \(a>0\) such that
    \begin{equation*}%\label{eq:twoConnetSup}
        \P_{\x}\big(\cE(\x)\big) \geq 1-\exp(-t^{-a}).
    \end{equation*}
\end{enumerate}
\end{lemma}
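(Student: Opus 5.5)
We handle the two parts separately. Part (a) is a first-moment computation via the Mecke formula. Part (b) is a Poisson construction that uses the birth-time window \(u\in(\nicefrac12,1)\) for the connector.

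\emph{Part (a).} The first inequality in~\eqref{eq:twoConnectSub} is Markov's inequality: on the event \(\x \overset{2}{\underset{\x,\y}{\rightsquigarrow}}\y\) there is at least one connector \(\z=(z,u)\) with \(u>t\) and \(\x\to\z\to\y\). For the second inequality we apply the Mecke equation. Since \(\z\) is younger than \(\x\), the arc \(\x\to\z\) is a reciprocal arc and has probability \((t/u)^\Gamma\rho(\beta^{-1}t^\gamma u^{1-\gamma}|x-z|^d)\). The arc \(\z\to\y\) is a forward arc with probability \(\rho(\beta^{-1}s^\gamma u^{1-\gamma}|z-y|^d)\). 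These two arcs are conditionally independent given the vertices. Thus the expectation equals
\[
\int_t^1 \d u\int_{\R^d}\d z\,\big(\tfrac{t}{u}\big)^\Gamma\rho\big(\beta^{-1}t^\gamma u^{1-\gamma}|x-z|^d\big)\,\rho\big(\beta^{-1}s^\gamma u^{1-\gamma}|z-y|^d\big).
\]
We split \(\R^d\) into \(\{|z-x|\ge |x-y|/2\}\) and \(\{|z-y|\ge|x-y|/2\}\). On each region we bound the factor whose distance is large by its polynomial tail, \(\rho(r)\le r^{-\delta}\), using \(|x-y|/2\) in place of the distance; this produces the factor \(2^{d\delta}\). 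We then integrate the remaining \(\rho\) over all of \(\R^d\), which gives \(\tfrac{\omega_d\delta}{\delta-1}\beta(\cdot)^{-1}\) with the appropriate age factor.

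In both regions the \(u\)-integral has the form \(\int_t^1 u^{-\Gamma-(1-\gamma)(\delta+1)}\d u\). Its exponent is \(<-1\) precisely when \(\gamma<\nicefrac{(\delta+\Gamma)}{(\delta+1)}\). Hence the integral is \(\lesssim t^{1-\Gamma-(1-\gamma)(\delta+1)}\) divided by the exponent gap, and this gap is where the denominator of \(C\) comes from. Collecting powers, the \(\Gamma\)'s cancel. The first region gives \(\beta^{\delta+1}s^{-\gamma}t^{\gamma-\delta}|x-y|^{-d\delta}\), and the second region gives a term of the same or smaller order. The distance assumption \(|x-y|^d\ge\beta s^{-\gamma}t^{\gamma-1}\) puts us in the polynomial regime of \(\rho\). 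Therefore
\[
\P_{\x,\y}\{\x\rightsquigarrow\y\}\ge\P_{\x,\y}(\x\to\y)=\beta^\delta s^{-\gamma\delta}t^{-(1-\gamma)\delta}|x-y|^{-d\delta}.
\]
The ratio of the bound to this probability is \(\beta (s/t)^{\gamma(\delta-1)}\le\beta\), which proves~\eqref{eq:twoConnectSub}.

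For~\eqref{eq:twoConnectSub2} we run the same computation with the roles reversed. Now \(\y\to\z\) is reciprocal with factor \((s/u)^\Gamma\), and \(\z\to\x\) is forward. We compare against \(\P(\y\to\x)=(s/t)^\Gamma\rho(\cdot)\). The extra \(\Gamma\)-factors combine to \((s/t)^\Gamma\) times the previous expression, so the same constant works.

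\emph{Part (b).} First we find targets: let \(Y\) be the set of vertices \(\y\) with \(s<t^{\alpha_1}\) and \(|x-y|^d<t^{-\alpha_2}\). This is a Poisson variable with mean \(\asymp t^{\alpha_1-\alpha_2}\to\infty\), since \(\alpha_2>\alpha_1\). A Poisson tail bound gives \(\sharp Y\ge\tfrac12\E\sharp Y\) with probability at least \(1-\exp(-ct^{\alpha_1-\alpha_2})\).

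Next we find connectors. Conditionally on \(Y\), we consider vertices \(\z=(z,u)\) with \(u\in(\nicefrac12,1)\) and \(|x-z|^d\le \beta t^{-\gamma}\). The probability of \(\x\to\z\) is \(\gtrsim t^\Gamma\). For \(\y\in Y\), the probability of \(\z\to\y\) is \(\gtrsim \beta^\delta s^{-\gamma\delta}t^{\alpha_2\delta}\ge\beta^\delta t^{\alpha_2\delta-\alpha_1\gamma\delta}\), using \(|z-y|^d\lesssim t^{-\alpha_2}\). Since the edge marks are independent, the connectors that succeed for at least one \(\y\in Y\) form a thinned Poisson process. We then bound
\[
1-\prod_{\y\in Y}\big(1-p(\z,\y)\big)\ \ge\ \tfrac12\Big(1\wedge\sum_{\y\in Y}p(\z,\y)\Big),
\]
where \(p(\z,\y)\) denotes the probability of \(\z\to\y\). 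This gives an expected number of successful connectors of order
\[
t^{\Gamma-\gamma}\cdot\Big(1\wedge t^{\alpha_1-\alpha_2}\,t^{\alpha_2\delta-\alpha_1\gamma\delta}\Big).
\]
The total exponent is \(-(\alpha_1(\gamma\delta-1)+\gamma-\Gamma-\alpha_2(\delta-1))<0\) by the choice of \(\alpha_2\). The constraint \(\alpha_1<\nicefrac{(\gamma-\Gamma)}{(\delta(1-\gamma))}\) is what makes the admissible interval for \(\alpha_2\) non-empty and above \(\alpha_1\). Hence the Poisson count of successful connectors has mean \(\ge t^{-a'}\) for some \(a'>0\), and it is empty with probability at most \(\exp(-t^{-a'})\). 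A union bound over the two failure events then yields the claim for a suitable \(a>0\).

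The main obstacle is in part (b): making the conditional thinning argument rigorous. One has to handle the cap at \(1\), keep the connector region and the age window consistent with the \(\rho\) lower bounds, and carefully track the exponents so that the stated ranges of \(\alpha_1,\alpha_2\) are exactly what is required.
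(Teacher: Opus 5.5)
Your proof is correct. Part (a) is the paper's argument: Mecke formula, covering $\R^d$ by the two half-distance regions, a tail bound on the far factor, integration of the near one, and $\gamma<\nicefrac{(\delta+\Gamma)}{(\delta+1)}$ to push the $u$-exponent $-\Gamma-(1-\gamma)(\delta+1)$ below $-1$. Your observation that the integrand for \eqref{eq:twoConnectSub2} is exactly $(s/t)^\Gamma$ times the previous one is the cleanest reading of the paper's ``works analogously''.

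There is one slip: you compare the two regions the wrong way round. The region $\{|z-y|\ge|x-y|/2\}$ gives $\beta^{\delta+1}s^{-\gamma\delta}t^{-\delta(1-\gamma)}|x-y|^{-d\delta}$. This is exactly $\beta\,\P_{\x,\y}(\x\to\y)$ up to constants, and it is the dominant term. The region $\{|z-x|\ge|x-y|/2\}$ gives the smaller term, with ratio $\beta(s/t)^{\gamma(\delta-1)}$. Both ratios are at most $\beta$, so nothing breaks. Your exponent gap $\delta(1-\gamma)+\Gamma-\gamma$ is also what the paper's own computation produces; the $\gamma-\Gamma$ in the stated $C$ is a sign typo.

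In (b) you reach the same power $t^{\Gamma-\gamma+\alpha_2(\delta-1)-\alpha_1(\gamma\delta-1)}$ as the paper, but by a different probabilistic route. The paper only computes the expected number of pairs $(\y,\z)$ with $\x\to\z\to\y$, integrating connectors over $u\in(t,1)$, which is where it needs $\Gamma<1$. It then asserts $\P_{\x}(\cE(\x))\ge 1-\exp(-c\,t^{\cdots})$. That does not follow from a first moment, because the pair count is not Poisson: one connector can serve many targets, and connectors sharing a target are correlated.

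Your two stages are exactly what makes the exponential bound rigorous:
\begin{enumerate}[(1)]
\item the Poisson number of targets concentrates;
\item conditionally on the targets, the successful connectors with $u\in(\nicefrac{1}{2},1)$ form an independently thinned Poisson process. This is justified by the disjoint birth-time windows and disjoint edge marks, together with $1-\prod(1-p)\ge\frac12(1\wedge\sum p)$.
\end{enumerate}
The window $u\in(\nicefrac{1}{2},1)$ also makes the $\Gamma<1$ discussion unnecessary.

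As in the paper, the estimate is obtained only for $t$ below a $\beta$-dependent threshold. For very small $\beta$ and $t$ close to $\nicefrac{1}{2}$ the inequality as literally stated cannot hold, since its right-hand side exceeds $1-e^{-1}$. The small-$t$ version is all that the proof of Theorem~\ref{thm:perc}~(ii) uses.
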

\begin{proof}
    We start by proving (a). Observe that the first inequality in~\eqref{eq:twoConnectSub} is simply a moment bound, and we hence focus on the second inequality. By our assumptions on the distance of \(\x\) and \(\y\) we have
    \begin{equation}
        \begin{aligned}
            \E_{\x,\y}\big[ & \sharp\{\z=(z,u):u>t\text{ and }\x\rightsquigarrow\z\rightsquigarrow\y\}\big] \\ &= \int_{\R^d} \d z\int_t^1\d u \ \big(\tfrac{t}{u}\big)^{\Gamma}\rho\big(\tfrac{1}{\beta}t^\gamma u^{1-\gamma}|x-z|^d\big)\rho\big(\tfrac{1}{\beta}s^\gamma u^{1-\gamma}|y-z|^d\big) \\
            & \leq \int_{\R^d} \d z\int_t^1\d u \ \big(\tfrac{t}{u}\big)^{\Gamma}\rho\big(\tfrac{1}{\beta}t^\gamma u^{1-\gamma}|x-z|^d\big)\rho\big(\tfrac{1}{2^d\beta}s^\gamma u^{1-\gamma}|x-y|^d\big)\1_{\{|y-z|>\nicefrac{|x-y|}{2}\}} \\
            & \qquad + \int_{\R^d} \d z\int_t^1\d u \ \big(\tfrac{t}{u}\big)^{\Gamma}\rho\big(\tfrac{1}{2^d\beta}t^\gamma u^{1-\gamma}|x-y|^d\big)\rho\big(\tfrac{1}{\beta}s^\gamma u^{1-\gamma}|y-z|^d\big) \1_{\{|x-z|\geq \nicefrac{|x-y|}{2}\}} \label{eq:CalcTwoConnectSub1}
        \end{aligned}
    \end{equation}
    We bound the indicator function in the first integral of~\eqref{eq:CalcTwoConnectSub1} by \(1\) and infer by a simple change of variables
    \begin{equation*}
        \begin{aligned}
            2^{d\delta} \beta^{\delta+1}t^{\Gamma-\gamma} s^{-\gamma\delta}|x-y|^{-d\delta} \Big(\int_t^1 u^{-\Gamma-1+\gamma-\delta(1-\gamma)}\d u \Big)\Big(\int_{\R^d}\rho(|z|^d)\d z\Big) 
            &\leq \beta C \big(\tfrac{1}{\beta}s^\gamma t^{1-\gamma}|x-y|^d\big)^{-\delta} \\
            &= \beta C  \rho\big(\tfrac{1}{\beta}s^\gamma t^{1-\gamma}|x-y|^d\big).
        \end{aligned}
    \end{equation*}
    Here, we have used that \(\gamma<\nicefrac{(\delta+\Gamma)}{(\delta+1)}\) in the first inequality and our distance assumption in the last equation. Moreover, \(C\) is derived by the integration constants and reads \(C=\tfrac{2^{d\delta}\omega_d \delta}{d(\delta-1)(\delta(1-\gamma)+\gamma-\Gamma)}\). For the second integral of~\eqref{eq:CalcTwoConnectSub1} we calculate similarly
    \begin{equation*}
        \begin{aligned}
            \tfrac{2^{d\delta}\omega_d\delta}{d(\delta-1)}\beta^{\delta+1}t^{\Gamma-\gamma} s^{-\gamma}|x-y|^{-d\delta}\int_t^1 u^{-\Gamma-\delta(1-\gamma)+\gamma-1}\d u 
            & 
                \leq \beta C \big(\tfrac{1}{\beta}s^{\gamma/\delta}t^{1-\gamma\delta}|x-y|^d\big)^{-\delta} 
            \\& 
                \leq \beta C \rho\big(\tfrac{1}{\beta}s^{\gamma}t^{1-\gamma}|x-y|^d\big),
        \end{aligned}
    \end{equation*}
    as \(\gamma>\nicefrac{\gamma}{\delta}\) and with \(C\) as above. This proves~\eqref{eq:twoConnectSub}. The proof of~\eqref{eq:twoConnectSub2} works analogously.  

    We also start the proof of~(b) by calculating the expected number of vertices \(\y\) being part of the event \(\cE(\x)\). That is
    \begin{equation}\label{eq:CalcTwoConnectSup1}
        \begin{aligned}
            \int\limits_{|x-y|^d<t^{-\alpha_2}} & \d y\int_0^{t^{\alpha_1}}\d s\int\limits_{\R^d}\d z \int_{t}^1 \d u \ \big(\tfrac{t}{u}\big)^\Gamma\rho\big(\tfrac{1}{\beta}t^\gamma u^{1-\gamma}|x-z|^d\big)\rho\big(\tfrac{1}{\beta}s^{\gamma}u^{1-\gamma}|y-z|^d\big) \\
            & \geq t^{\Gamma}\int\limits_{|x-y|^d<t^{-\alpha_2}} \d y\int_0^{t^{\alpha_1}}\d s\int\limits_{|x-z|^d<t^{-\gamma}}\d z \int_t^1 \d u \ u^{-\Gamma} \rho\big(\tfrac{1}{\beta}\big)\rho\big(\tfrac{1}{\beta}s^{\gamma}|y-z|^d\big),
        \end{aligned}
    \end{equation} 
    Observe that from \(|x-z|^d<t^{-\gamma}<t^{-\alpha_2}\) and \(|x-y|^d<t^{-\alpha_2}\), we derive \(|y-z|^d<2 t^{-\alpha_2}\) by the triangle inequality. Using this and the fact that \(\Gamma<1\) as we would always have \(\gamma<\nicefrac{(\delta+\Gamma)}{(\delta+1)}\) and \(t<\nicefrac{1}{2}\) otherwise, the integral in~\eqref{eq:CalcTwoConnectSup1} reads
    \begin{equation*}
        \begin{aligned}
           (1-2^{\Gamma-1}) & t^{\Gamma} \int\limits_{|x-y|^d<t^{-\alpha_2}}  \d y\int_0^{t^{\alpha_1}}\d s\int\limits_{|x-z|^d<t^{-\gamma}}\d z \ \rho\big(\tfrac{1}{\beta}\big)\rho\big(\tfrac{1}{\beta}s^{\gamma}|y-z|^d\big) \\
            &\geq (1-2^{\Gamma-1}) t^{\Gamma}\rho(\nicefrac{1}{\beta}) t^{-\gamma}\int\limits_{|x-y|^d<t^{-\alpha_2}}\d y \int_0^{t^{\alpha_1}} \d s \ \rho\big(\tfrac{2}{\beta}t^{\gamma\alpha_1}t^{-\alpha_2}\big) \\
            &\geq (1-2^{\Gamma-1})\rho(\nicefrac{1}{\beta})\beta^{\delta}t^{\Gamma-\gamma-\delta(\gamma\alpha_1-\alpha_2)+\alpha_1-\alpha_2} \\
            &=c t^{\Gamma-\gamma + \alpha_2(\delta-1)-\alpha_1(\gamma\delta-1)}
        \end{aligned}
    \end{equation*}
    for a constant \(c=c(\beta)>0\). The proof finishes with the observation that, due to our choices of \(\alpha_1\) and \(\alpha_2\), we have \(\Gamma-\gamma+\alpha_2(\delta-1)-\alpha_1(\gamma\delta-1)<0\) and therefore
    \[
    	\P_{\x}(\cE(\x)) \geq 1-\exp(-c(\beta) t^{\Gamma-\gamma+\alpha_2(\delta-1)-\alpha_1(\gamma\delta-1)})\geq 1-\exp(-t^{-a}),
    \]
    for an appropriately adapted \(a\).
\end{proof}

The previous lemma tells us that there is a change in behaviour in the connection strategy when \(\gamma=\nicefrac{(\Gamma+\delta)}{(\delta+1)}\). In the next paragraph, we show that there exists a path to infinity for all \(\beta\). Let us comment before that \(\Gamma>\gamma\) implies \(\gamma<\nicefrac{(\gamma+\Gamma)}{(\delta+1)}\). Hence, there always exists a subcritical phase when the outdegree distribution is Poisson. In that situation there are simply too few reciprocal connections to make use of young connectors.

\paragraph{Absence of a subcritical weak percolation phase} 
\begin{proof}[Proof of Theorem~\ref{thm:perc} (ii).]
We use Lemma~\ref{lem:twoConnect} (b) to show that there exists a sequence \((\x_k=(x_k,s_k):k\in\N)\) of vertices such that \(s_k<s_{k-1}^{\alpha_1}\) and \(|x_k-x_{k-1}|^d<s_{k-1}^{-\alpha_2}\) such that \(\x_{k-1}\overset{2}{\underset{\x_{k.1},\x_k}{\rightsquigarrow}}\x_k\) with a positive probability. The almost sure existence of a directed paths to infinity then follows by ergodicity. Indeed, starting from a vertex \(\x_1=(x_1,s_1)\) with sufficiently small birth time \(s_1\), we have
\begin{equation*}
    \begin{aligned}
        \P_{\x_1}\{\text{there is no such sequence in \(\D\)}\} 
        & 
        	\leq \P_{\x_1}(\cE(\x_1)^c) +\sum_{k\geq 2} \P\Big(\cE(\x_k)^c \, \Big| \, \bigcap_{j=1}^{k-1} \cE(\x_j)\Big)  
        \\  & 
        	\leq \sum_{k\geq 1} \exp(-(s_1^{-a})^{k}) <1.
    \end{aligned}
\end{equation*}
This concludes the proof.
\end{proof}

\paragraph{Existence of a subcritical weak percolation phase}
To derive the existence of a weak percolation phase transition in \(\beta\), we replicate some arguemtns of~\cite{GLM2021}. There, the authors bound the probability existence of a \emph{shortcut-free} path of length \(n\) starting in the root in the undirected model. Transferred to our setting, a path \(\mathscr{P}=(\x_0,\dots,\x_n)\) is called \emph{shortcut-free}, if \(\mathscr{N}^\text{in}(\x_{j})\cap \mathscr{P} = \x_{j-1}\) and \(\mathscr{N}^\text{out}(\x_j)\cap \mathscr{P} = \x_{j+1}\) for all \(j=1,\dots,n-1\).  Note that there exists a directed shortcut-free path to infinity whenever there exists a directed path to infinity. We shortly recap the main arguments of~\cite{GLM2021} and explain how they are applicable to our setting. Throughout the paragraph, we assume that \(\gamma<\nicefrac{(\delta+\Gamma)}{(\delta+1)}\).

To make use of the shortcut-free property, we work in the following with the underlying undirected graph \(\G=\D[\beta,\gamma,\delta,1]\), and recall the profile function \(\rho(x)=1\wedge x^{-\delta}\). From this underlying graph, we construct the directed graph \(\widetilde{\D}\) as above but with the reciprocity profile \(\pi\), replaced with 
\begin{equation*} %\label{eq:refollowCoupling}
    \widetilde{\pi}(s,t,|x-y|^d) = \1_{\{|x-y|^d\leq \beta s^{-\gamma}t^{\gamma-1}\}}+\left(\frac{s}{t}\right)^\Gamma \1_{\{|x-y|^d>\beta s^{-\gamma}t^{\gamma-1}\}}, \qquad \text{ for }s<t.
\end{equation*}
That is, in \(\widetilde{\D}\), all vertices \(\x=(x,t)\) and \(\y=(y,s)\) with \(|x-y|^d\leq \beta (s\wedge t)^{-\gamma}(s\vee t)^{\gamma-1}\), are connected by a double-arc almost surely. As a result
\[
    \P_{o}\{\0\rightarrow\infty \text{ in }\D\}\leq \P_o\{\0\rightarrow\infty \text{ in }\widetilde{\D}\}.
\]
and we work in the following explicitly on the digraph \(\widetilde{\D}\).

\begin{figure}
\begin{center}
			\begin{tikzpicture}[scale=0.3, every node/.style={scale=0.3}]
				\node (Z) at (-2.5,8.5)[circle, draw,scale=1.5]{1};
				\draw[->] (-1,-0.5) -- (-1, 8)
					node[left,scale=2] {$t$};
				\node[->] (A) at (0,5)[circle, fill=black, label ={}]{};	
    			\node[->] (B) at (2.5,3)[circle, fill = black, label={}] {};
 %   			\node (C) at (5,4) [circle, draw, label={}] {};
    			\node[->] (D) at (7.5,7)[circle, draw, label = {}] {};
    			\node[->] (E) at (10, 6)[circle, draw, label={}] {};
    			\node[->] (F) at (12.5,1) [circle, fill= black, label={} ] {};
    			\node[->] (G) at (15,4.5) [circle, draw, label={}]{};
    			\node[->] (H) at (17.5,2.5)[circle, fill=black, label={}]{};
    			\draw[->] (A) to (B);
				\draw[->] (B) to (D);
%				\draw (C) to (D);
				\draw[->] (D) to (E);
				\draw[->] (E) to (F);
				\draw[->] (F) to (G);
				\draw[->] (G) to (H);
			\end{tikzpicture}
			\hspace{1 cm}
			\begin{tikzpicture}[scale=0.3, every node/.style={scale=0.3}]
				\node (Z) at (-2.5,8.5)[circle, draw,scale=1.5]{2};
				\draw[->] (-1,-0.5) -- (-1, 8)
					node[left,scale=2] {$t$};
				\node (A) at (0,5)[circle, fill=black, label ={}]{};
    			\node (B) at (2.5,3)[circle, fill = black, label={}] {};
%    			\node (C) at (5,4) [circle, draw, label={}] {};
    			\node (D) at (7.5,7)[circle, draw, label = {}, dotted] {};
    			\node (E) at (10, 6)[circle, draw, label={}] {};
    			\node (F) at (12.5,1) [circle, fill= black, label={} ] {};
    			\node (G) at (15,4.5) [circle, draw, label={}]{};
    			\node (H) at (17.5,2.5)[circle, fill=black, label={}]{};
    			\draw[->] (A) to (B);
%				\draw (B) to (C);
				\draw[dotted,->] (B) to (D);
				\draw[dotted,->] (D) to (E);
				\draw[->] (B) to (E);
				\draw[->] (E) to (F);
				\draw[->] (F) to (G);
				\draw[->] (G) to (H);
%				\draw (F) to (H);
			\end{tikzpicture}
			\begin{tikzpicture}[scale=0.3, every node/.style={scale=0.3}]
				\node (Z) at (-2.5,8.5)[circle, draw,scale=1.5]{3};
				\draw[->] (-1,-0.5) -- (-1, 8)
					node[left,scale=2] {$t$};
				\node (A) at (0,5)[circle, fill=black, label ={}]{};
    			\node (B) at (2.5,3)[circle, fill = black, label={}] {};
  %  			\node (C) at (5,4) [circle, draw, label={}] {};
    			\node (D) at (7.5,7)[label = {}] {};
    			\node (E) at (10, 6)[circle, draw, label={}, dotted] {};
    			\node (F) at (12.5,1) [circle, fill= black, label={} ] {};
    			\node (G) at (15,4.5) [circle, draw, label ={}]{};
    			\node (H) at (17.5,2.5)[circle, fill=black, label={}]{};
    			\draw[->] (A) to (B);
				\draw[->] (B) to (F);
				\draw[dotted,->] (B) to (E);
				\draw[dotted,->] (E) to (F);
				\draw[->] (B) to (F);
				\draw[->] (F) to (G);
				\draw[->] (G) to (H);
			\end{tikzpicture}
			\hspace{1 cm}
			\begin{tikzpicture}[scale=0.3, every node/.style={scale=0.3}]
				\node (Z) at (-2.5,8.5)[circle, draw,scale=1.5]{4};
				\draw[->] (-1,-0.5) -- (-1, 8)
					node[left,scale=2] {$t$};
				\node (A) at (0,5)[circle, fill=black, label ={}]{};
    			\node (B) at (2.5,3)[circle, fill = black, label={}] {};
 %   			\node (C) at (5,4) [circle, draw, label={}, dotted] {};
    			\node (D) at (7.5,7)[label = {}] {};
    			\node (E) at (10, 6)[label={}] {};
    			\node (F) at (12.5,1) [circle, fill= black, label={} ] {};
    			\node (G) at (15,4.5) [circle, draw, dotted, label={}]{};
    			\node (H) at (17.5,2.5)[circle, fill=black, label={}]{};
    			\draw[->] (A) to (B);
				\draw[dotted,->] (F) to (G);
				\draw[dotted,->] (G) to (H);
				\draw[->] (B) to (F);
				\draw[->] (F) to (H);
			\end{tikzpicture}
			\caption{A directed path where a vertex's birth time is denoted on the $t$-axis. The vertices of the skeleton with running minimum birth time are in black. We successively remove all local maxima, starting with the youngest, and replace them by directed edges until the directed path, only containing the skeleton vertices, is left.}
	\label{fig:Skeleton}
\end{center}
\end{figure}
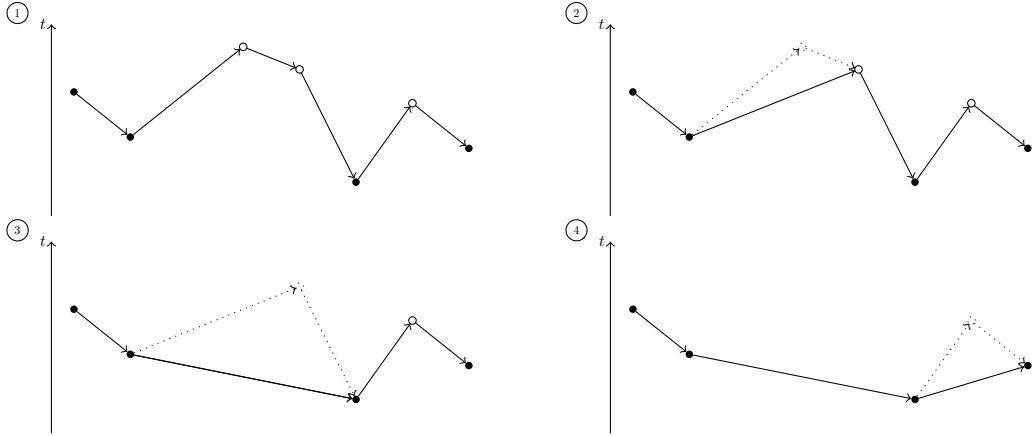

For a path \(\mathscr{P}\) we call the collection of vertices with running minimum birth time from both sides the \emph{skeleton} of \(\mathscr{P}\). That is, we start from the root vertex \((x_0,t_0)\) and search for the first vertex \((x_{j_1},t_{j_1})\) that has mark \(t_{j_1}<t_0\). Restarting from this vertex,  we search for the next vertex with smaller birth time until we reach the vertex with smallest birth time of the path. Afterwards we do the same but starting from the last vertex of the path \((x_n,t_n)\) and going backwards across the indices. Another possibility to identify the path's skeleton is the following: We call a vertex \(\x_j\in \mathscr{P}\setminus\{\x_{0},\x_n\}\) local maximum if \(t_j>t_{j-1}\) and \(t_j>t_{j+1}\). Put differently, \(\x_j\) is younger than its preceding and subsequent vertex. We now successively remove all local maxima from \(\mathscr{P}\) as follows: First, take the local maximum in \(\mathscr{P}\) with the greatest birth time, remove it from \(\mathscr{P}\) and connect its former neighbours by a directed edge oriented from preceding to subsequent vertex. In the resulting path, we take the local maximum of greatest birth time and remove it, repeating until there is no local maximum left, see Figure~\ref{fig:Skeleton}. 

The idea is now the following: To bound the probability that the root \(\0\) starts a directed shortcut-free paths of length \(n\) in \(\widetilde{\D}\), we first condition on the skeleton of the paths. Afterwards, we remove step by step the local maxima of \(\mathscr{P}\) and replace them with directed edges. Since \(\gamma<\nicefrac{(\delta+\Gamma)}{(\delta+1)}\), we have by Lemma~\ref{lem:twoConnect}~(a) that the probability of the new arc is up to a \(\beta\) dependent constant larger than the probability of the previous connection via the intermediate vertex. Here, it is important to note that all vertices \(\x\) and \(\y\) within distance \(|x-y|^d<\beta (t_x\wedge t_y)^{-\gamma}(t_y\vee t_y)^{\gamma-1}\) are already connected by a double-arc in \(\widetilde{\D}\). Hence, in this situation we never use connectors due to the shortcut-free property. This allows us to argue as in the proof of~\cite[Lemma~2.3]{GLM2021} to infer for all \(k\geq 1\)
\begin{equation*}% \label{eq:kConnection}
    \P_{\x,\y}\Big\{\x\overset{k}{\underset{\x,\y}{\rightsquigarrow}}\y \text{ in }\widetilde{\D}\Big\} \leq (4\beta C)^{k-1} \P_{\x,\y}\{\x\rightarrow\y \text{ in }\widetilde{\D}\},
\end{equation*}
where \(\x\overset{k}{\underset{\x,\y}{\rightsquigarrow}}\y\) denotes the event that \(\x\) is connected to \(\y\) by a directed path of length \(k\), in which all intermediate connectors are younger than \(\x\) and \(\y\) themselves. Note that no assumption whether \(\x\) or \(\y\) is the older vertex has been made. 

Let now \(\mathscr{S}=(\x_0,\x_1,\dots,\x_k)\) be a given skeleton and let us write \(\x\overset{n}{\underset{\mathscr{S}}{\rightsquigarrow}}\y\) for the event that there is a directed path of length \(n\) from \(\x_0=\x\) to \(\x_k=\y\) with skeleton \(\mathscr{S}\). We can then use the BK-inequality~\cite{BK85} in a version of~\cite{HvdHLM20} as outlined in~\cite[Eq.\ (11)]{GLM2021}
to deduce
\begin{equation}\label{eq:BK}
    \begin{aligned}
        \P_{\x_0,\x_1,\dots,\x_k}\big\{\x\overset{n}{\underset{\mathscr{S}}{\rightsquigarrow}}\y \text{ in }\widetilde{\D}\big\} 
        & 
        	\leq \sum_{\substack{n_1,\dots,n_k\in\N \\ n_1+\dots +n_k=n}} \, \prod_{j=1}^k \P_{\x_{j-1},\x_j}\{\x_{j-1}\overset{n_j}{\underset{\x_{j-1},\x_j}{\rightsquigarrow}}\x_j \text{ in }\widetilde{\D}\} 
        \\ &
        	\leq (4\beta C)^{n-k}\binom{n-1}{k-1} \prod_{j=1}^k  \P_{\x_{j-1},\x_j}\{\x_{j-1}\to\x_j \text{ in }\widetilde{\D}\}.
    \end{aligned}
\end{equation}
From here, the proof finishes by applying the last term of~\eqref{eq:BK} to paths with decreasing skeleton and performing a expectation bound. 

\begin{proof}[Proof of Theorem~\ref{thm:perc}~(i).]
 First observe that, on the event \(\{\o\rightsquigarrow\infty\}\), we have \(\inf\{t_x\colon \o\rightsquigarrow\x\}=0\) since, by ergodicity and monotonicity of \(\{\o\rightsquigarrow\infty\}\) under decreasing birth times, the infinite weak component cannot avoid entire birth time intervals that have positive mass. Consequently, there must exists a directed path \(\mathscr{P}\) from \(\o\) that contains infinitely many subpaths that start in \(\o\) and end in their oldest vertex. By construction such a subpaths has a skeleton that is decreasing in its vertices birth times. Denote by \(\mathscr{S}_k=\{\x_0=\o,\x_1,\dots,\x_k\}\) such a skeleton of length \(k\) with \(t_0>t_1>\dots>t_k\), then by~\eqref{eq:BK} and Mecke's equation, we have
 \begin{equation*}
 	\begin{aligned}
 		\sum_{n=1}^\infty &
 			 \P_o\{\exists \text{ directed outgoing path with decreasing skeleton of length } n \text{ starting in }\o\}
 		\\ & \hspace{-0.3 cm}
 			\leq \sum_{n=1}^\infty \sum_{k=1}^{n} \P_o\big\{\exists \mathscr{S}_k\colon \o\overset{n}{\underset{\mathscr{S}_k}{\rightsquigarrow}}\x_k\big \} 
 		%\\ & \hspace{-0.3 cm}
 		%	\leq \sum_{n=1}^\infty \sum_{k=1}^{n} (4\beta C)^{n-k}\binom{n-1}{k-1} \int\limits_{(\R^d)^k} \d\x_1\cdots \d\x_k \hspace{-0.5 cm}\int\limits_{1>t_0>\dots>t_k>0} \hspace{-0.5 cm} \d t_0\cdots \d t_k \,  \prod_{j=1}^k  \P_{\x_{j-1},\x_j}\{\x_{j-1}\to\x_j \text{ in }\widetilde{\D}\}
 		\\ & \hspace{-0.3 cm}
 			\leq \sum_{n=1}^\infty \sum_{k=1}^{n} (4\beta C)^{n-k}\binom{n-1}{k-1} \int\limits_{(\R^d)^k} \d x_1\cdots \d x_k \hspace{-0.5 cm}\int\limits_{1>t_0>\dots>t_k>0} \hspace{-0.5 cm} \d t_0\cdots \d t_k \,  \prod_{j=1}^k \rho\big(\beta^{-1}t_{j-1}^{1-\gamma}t_j^\gamma |x_{j}-x_{j-1}|^d\big). 
 	\end{aligned}
 \end{equation*}
Note that we have used the assumption \(\gamma<\nicefrac{(\delta+\Gamma)}{(\delta+1)}\) here, as~\eqref{eq:BK} builds on Lemma~\ref{lem:twoConnect}~(a). Now, starting with \(j=k\) and and going backwards across the indices, we successively perform the change of variables \(z_j = (\beta^{-1}t_{j-1}^{1-\gamma}t_j^{\gamma})^{1/d}(x_j-x_{j-1})\) and obtain
\[
	\begin{aligned}
		\int\limits_{(\R^d)^k} 
		&
			\d x_1\cdots \d x_k \hspace{-0.5 cm}\int\limits_{1>t_0>\dots>t_k>0} \hspace{-0.5 cm} \d t_0\cdots \d t_k \,  \prod_{j=1}^k \rho\big(\beta^{-1}t_{j-1}^{1-\gamma}t_j^\gamma |x_{j}-x_{j-1}|^d\big)
		\\ &
			= \Big(\beta \int\limits_{\R^d} \d z \, \rho(|z|^d)\Big)^k \int\limits_0^1 \d t_0 \int\limits_0^{t_0}\d t_1 \dots \int\limits_0^{t_{k-1}} \d t_k \, t_0^{\gamma-1}\big(\prod_{j=1}^{k-1} t_j^{-1}\big) t_k^{-\gamma}
	%	\\ &
			\leq \Big(\beta \frac{\omega_d \delta}{\delta-1} \frac{1}{1-\gamma}\Big)^k. 
	\end{aligned}
\] 	
Plugging this back in the previous calculation, we infer the existence of a constant \(C'>4\) such that
\[
	\sum_{n\in\N }\P_o\{\exists \text{ directed path with decreasing skeleton of length } n \text{ starting in }\o\} \leq \sum_{n\in\N} (\beta C')^n.
\]
Choosing \(\beta<1/C'\), the Borel-Cantelli Lemma yields that only finitely many paths starting in the origin that end in their oldest vertex exist, almost surely. Therefore, \(\inf\{t_x\colon \o\rightsquigarrow\x\}>0\), which ultimately implies \(\P_o(\o\rightsquigarrow\infty)=0\), as required. 
\end{proof}

\paragraph{Acknowledgement.} LL received support by the Leibniz Association within the Leibniz Junior Research Group on \textit{Probabilistic Methods for Dynamic Communication Networks} as part of the Leibniz Competition (grant no.\ J105/2020) and by the Deutsche Forschungsgemeinschaft (DFG, German Research Foundation) under Germany's Excellence Strategy - The Berlin Mathematics Research Center MATH+ (EXC-2046/1, EXC-2046/2, project ID: 390685689) through the project \emph{Information flow \& emergent behavior in complex networks}. CM's research was partly funded by Deutsche Forschungsgemeinschaft (DFG, German Research Foundation) – SPP 2265 443916008.

%%%%%%%%%%%%%%%%%%
%%% References %%%
%%%%%%%%%%%%%%%%%%
\section*{References}
\renewcommand*{\bibfont}{\footnotesize}
\emergencystretch=1em
\printbibliography[heading = none]

\appendix

\section{Expected number of bow-ties}\label{sec:int_results}
\begin{lemma}\label{lem:orderSopen} Consider \(S(t_o)\) as defined in~\eqref{eq:defS}. Then,
\begin{equation}\label{eq:lemSopenAsymp}
\int_{1/(t\log t)}^1 S(t_o)\d t_o \asymp    
	\begin{cases}
    	1 & \text{if } \gamma < \tfrac{1}{2},
		\\
		(\log t)^{1+\1{\{\Gamma=0\}}} & \text{if } \gamma = \tfrac{1}{2},
		\\
        (t\log t)^{2\gamma-1}(\log t)^{\1\{\Gamma=0\}} & \text{if } \gamma > \tfrac{1}{2}.
	\end{cases}
\end{equation}
In particular, if \(\gamma<1/2\), then \(\int_{1/(t\log t)}^1 S(t_o) \d t_o\) is uniformly bounded in \(t\).
\end{lemma}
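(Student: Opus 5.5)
The plan is to evaluate the triple integral in the Mecke representation of \(S(t_o)\) directly, exploiting its product structure. With \(\o=(o,t_o)\) fixed, the factor \(\P_{\o,\u}(\0\to\u)\) depends only on \(\u\). Integrating over \(\u\in\bbT^d_t\times(1/(t\log t),1)\) therefore gives the expected outdegree of \(\o\), up to a bounded error from the torus and the truncation. By~\eqref{eq:degCalc1} and~\eqref{eq:degCalc2} this is
\[
D(t_o)\asymp \beta\big(1+t_o^{-(\gamma-\Gamma)_+}\big)\quad(\Gamma\neq\gamma),\qquad D(t_o)\asymp \beta\big(1+\log(1/t_o)\big)\quad(\Gamma=\gamma).
\]
Next I integrate over \(\w\) for fixed \(\v=(v,t_v)\). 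This gives the expected indegree of \(\v\). By the intensity \(\lambda^{\text{in}}_{\v}\), it equals a bounded contribution from older in-neighbours (since \((s/t_v)^\Gamma\le1\)) plus the younger contribution
\[
\int_{t_v}^1 \beta\,t_v^{-\gamma}s^{\gamma-1}\d s\;\asymp\; t_v^{-\gamma},
\]
where \(t_v\ge 1/(t\log t)\). Hence
\[
S(t_o)\asymp D(t_o)\int \P_{\o,\v}(\0\to\v)\,\big(1+t_v^{-\gamma}\big)\d\v .
\]

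The remaining integral is a weighted outdegree, and I split it according to the relative age of \(\v\). For \(t_v<t_o\) (forward arcs) the spatial integral gives \(\beta t_v^{-\gamma}t_o^{\gamma-1}\). Integrating \(t_v^{-2\gamma}\) over \((1/(t\log t),t_o)\) then gives \(t_o^{\gamma-1}\) times \(t_o^{1-2\gamma}\) if \(\gamma<1/2\), times \(\log(t\log t\,t_o)\) if \(\gamma=1/2\), and times \((t\log t)^{2\gamma-1}\) if \(\gamma>1/2\). For \(t_v>t_o\) (reciprocal arcs) the integrand is \((t_o/t_v)^\Gamma\beta t_o^{-\gamma}t_v^{\gamma-1}t_v^{-\gamma}\). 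This is at most \(t_o^{-\gamma}\) times \(\int_{t_o}^1 t_v^{-1-\Gamma}\d t_v\,t_o^\Gamma\), i.e.\ \(\asymp t_o^{-\gamma}\) for \(\Gamma>0\) and \(\asymp t_o^{-\gamma}\log(1/t_o)\) for \(\Gamma=0\). The \(\Gamma=0\) logarithm is exactly where the factor \((\log t)^{\1\{\Gamma=0\}}\) should originate.

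Finally I multiply by \(D(t_o)\) and integrate over \(t_o\in(1/(t\log t),1)\), keeping track of the dominant region. For \(\gamma<1/2\) every term is an integrable power of \(t_o\): the worst case is \(t_o^{-\gamma}\cdot t_o^{-(\gamma-\Gamma)_+}\), possibly with a logarithm, and its exponent stays above \(-1\) because \(\gamma<1/2\). This yields order \(1\), matching lower bounds coming from \(t_o\) of order one. For \(\gamma\ge1/2\) the forward term \(t_o^{\gamma-1}(t\log t)^{2\gamma-1}D(t_o)\) is integrable in \(t_o\) and gives \((t\log t)^{2\gamma-1}\), or \(\log t\) at \(\gamma=1/2\). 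The reciprocal term \(t_o^{-\gamma}D(t_o)\log(1/t_o)^{\1\{\Gamma=0\}}\) with \(D\asymp t_o^{-\gamma}\) (when \(\Gamma=0\)) integrates down to \(1/(t\log t)\) to give \((t\log t)^{2\gamma-1}\log t\) when \(\Gamma=0\). For \(\Gamma>0\) it is of lower order.

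The main obstacle is this last bookkeeping step: checking in every subcase (\(\Gamma\) versus \(\gamma\), \(\Gamma=0\) or not) that no cross term, e.g.\ \(D(t_o)\asymp t_o^{\Gamma-\gamma}\) combined with the reciprocal \(t_o^{-\gamma}\), exceeds the claimed order. I also have to check that the lower bounds match, which I would obtain by restricting to \(t_o,t_v\) in dyadic ranges near the truncation and spatial balls where \(\rho\ge c\). Torus effects only cut off the spatial integrals at scale \(t\). In the regimes above the relevant scales \(t_v^{-\gamma}t_o^{\gamma-1}\) stay \(\lesssim t\) up to logarithms, so they do not change the orders.
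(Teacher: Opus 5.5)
Your decomposition is the same as the paper's. You write $S(t_o)=A(t_o)B(t_o)$, where $A=D$ is the truncated expected outdegree and the $\w$-integral gives the indegree $h_t(t_v)\asymp t_v^{-\gamma}$. You then split $B$ into a forward part $I_1$ ($t_v<t_o$) and a reciprocal part $I_2$ ($t_v>t_o$). Your formulas for $D$, $h_t$, $I_1$ and $I_2$ are correct, and so is your treatment of $\gamma<1/2$ and of every case with $\Gamma>0$.

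The final $t_o$-integration is wrong when $\Gamma=0$. There $D(t_o)\asymp t_o^{-\gamma}$, so for $\gamma>1/2$ the forward term is $D(t_o)\,t_o^{\gamma-1}(t\log t)^{2\gamma-1}\asymp t_o^{-1}(t\log t)^{2\gamma-1}$. This is not integrable at $0$: cut off at $\bar t=1/(t\log t)$, it already yields $(t\log t)^{2\gamma-1}\log t$. This is exactly where the paper gets the factor $(\log t)^{\1\{\Gamma=0\}}$: it integrates $t_o^{\Gamma-1}$ against the forward part. Your reciprocal term happens to have the same order, so for $\gamma>1/2$ your conclusion survives. Still, your claim that the forward term is integrable and of order $(t\log t)^{2\gamma-1}$ is false.

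At $\gamma=1/2$, $\Gamma=0$, the slip gives the wrong answer. You assign the forward term order $\log t$. Through your formula $(t\log t)^{2\gamma-1}\log t$, you assign the reciprocal term order $\log t$ as well. So your bookkeeping gives total order $\log t$ instead of the claimed $(\log t)^2$. In fact both pieces are of order $(\log t)^2$. They are constant multiples of $\int_{\bar t}^1 t_o^{-1}\log(t_o/\bar t)\,\d t_o$ and $\int_{\bar t}^1 t_o^{-1}\log(1/t_o)\,\d t_o$, and each of these equals $\tfrac12\log^2(1/\bar t)$. Equivalently, $I_1+I_2\asymp t_o^{-1/2}\log(1/\bar t)$, so $A(t_o)B(t_o)\asymp t_o^{-1}\log(t\log t)$.

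Your formula for the reciprocal term assumes that $\int_{\bar t}^1 t_o^{-2\gamma}\log(1/t_o)\,\d t_o$ is dominated by $t_o\approx\bar t$. That fails exactly at the scale-invariant exponent $2\gamma=1$, where every dyadic scale of $t_o$ contributes equally. The approach is sound and the repair is elementary. As written, however, the $\Gamma=0$ subcases are not established, in particular the $(\log t)^2$ at $\gamma=1/2$.
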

\begin{proof}
Using Mecke's equation, we can write
\begin{equation}\label{eq:uIntegral}
    S(t_o)
    = \hspace{-0.5cm}
    \int\limits_{(\mathbb{T}_t^d\times(\nicefrac{1}{t\log t},1))^3}\hspace{-0.5 cm}
    \P_{\o,\u}(\0\to\mathbf{u})\;\P_{\o,\v}(\0\to\mathbf{v})\;\P_{\w,\v}(\mathbf{w}\to\mathbf{v})
    \d\mathbf{u}\d\mathbf{v}\d\mathbf{w},
\end{equation}
where $d\mathbf{u}=\d u \d t_u$ etc. The three arc indicators are conditionally independent given all positions
and birth times, because we never simultaneously require both $\x\to\y$ and $\y\to\x$ for any pair.
The occupation probability for a generic arc $\x=(x,t_x)\to\y=(y,t_y)$ can be written as
\[
    \P_{\x,\y}(\x\to\y) =
    \rho\Big(\beta^{-1} a(t_x,t_y)\,\textrm{d}_t(x,y)^d\Big)\,\Pi(t_x,t_y),
\]
where
\[
a(t_x,t_y)=
\begin{cases}
t_y^\gamma t_x^{1-\gamma}, & t_x>t_y, 
\\
t_x^\gamma\,t_y^{1-\gamma}, & t_x<t_y,
\end{cases}
\qquad
\Pi(t_x,t_y)=
\begin{cases}
1, & t_x>t_y \quad(\text{forward}),
\\
(t_x/t_y)^\Gamma, & t_x<t_y \quad(\text{reciprocal}).
\end{cases}
\]
Note \(\d_t(o,u)=|u|\) so that a change of variables yields, for any $a>0$,
\begin{equation} \label{eq:spaceint-open}
    \begin{aligned}
        \int\limits_{\bbT_t^d} \rho(\beta^{-1}a \d_t(o,x)^d) \d x
        & 
            = \!\!\!\! \int\limits_{\big[-\tfrac{t^{1/d}}{2}, \tfrac{t^{1/d}}{2}\big]^d} \!\!\!\! \rho(\beta^{-1}a |x|^d) \d x = \frac{\beta}{a} \!\!\!\!\! \int\limits_{\big[-\tfrac{(at/\beta)^{1/d}}{2}, \tfrac{(at\beta)^{1/d}}{2}\big]^d} \!\!\!\!\!\! \rho(|x|^d) \d x \asymp a^{-1},
    \end{aligned}
\end{equation}
by the integrability assumption on \(\rho\), that is 
\[
    1\leq \sup_{t} \!\!\! \int\limits_{[-t^{1/d}/2,t^{1/d}/2]^d}  \!\!\! \rho(|x|^d) \d x =  \int\limits_{\R^d} \rho(|x|^d)\d x=:c_\rho <\infty.
\]
Define the \emph{spatially integrated connection kernel}
\[
\kappa_t(t_x,t_y)
:=
\int_{\mathbb{T}_t^d} \P_{\o,\z}\big((0,t_x)\to(z,t_y)\big)\d z
\]
and obtain
\begin{equation}\label{eq:kappa-open}
\kappa_t(t_x,t_y) \asymp \frac{\Pi(t_x,t_y)}{a(t_x,t_y)} \asymp 
\1_{\{t_y<t_x\}}t_y^{-\gamma} t_x^{\gamma-1}
+
\1_{\{t_x<t_y\}}\,
t_x^{\Gamma-\gamma}\,t_y^{-(1-\gamma+\Gamma)},
\end{equation}
noting that the constants hidden in \(\asymp\) do not depend on \(t,\), \(t_x\), or \(t_y\). Since the $\mathbf{u}$-integral in~\eqref{eq:uIntegral} does not involve $\mathbf{v}$ or $\mathbf{w}$, we factorise
\[
    S(t_o)=A(t_o)\,B(t_o),
\]
where
\[
A(t_o)
:=
\int\limits_{1/(t\log t)}^1 \kappa_t(t_o,t_u)\d t_u,
\qquad
B(t_o)
:=
\int\limits_{1/(t\log t)}^1 \int\limits_{1/(t\log t)}^1
\kappa_t(t_o,t_v)\,\kappa_t(t_w,t_v)\d t_w\d t_v.
\]
In particular, $A(t_o)$ is the expected outdegree (under birth-time truncation) of a vertex with mark $t_o$. Hence, by performing the same calculations as in the proof of Theorem~\ref{thm:neighbourhood}, cf.~\eqref{eq:degCalc1} and~\eqref{eq:degCalc2},
\[
    A(t_o)\asymp t_o^{\Gamma-\gamma}\1\{\Gamma\in [0,\gamma)\}+\log(1/t_o)\1\{\Gamma=\gamma\} + \1\{\Gamma>\gamma\},
\]
noting that the truncation at \(1/(t\log t)\) only affects lower order terms.
%
% \paragraph{Asymptotics of $\boldsymbol{A(t_o)}$.}
% Splitting at $t_u=t_0$ and using \eqref{eq:kappa-open},
% the forward part contributes $\Theta(1)$ (uniformly in $t_o\in[1/(t\log t),1]$), while the reciprocal part gives
% \[
% \int_{t_0}^1 t_o^{\Gamma-\gamma}\,t_u^{-(1-\gamma+\Gamma)}\d t_u
% \asymp
% \begin{cases}
% t_o^{\Gamma-\gamma}, & \Gamma\in[0,\gamma),\\
% \log(1/t_o), & \Gamma=\gamma,\\
% 1, & \Gamma>\gamma.
% \end{cases}
% \]
% Hence, as $t_o\to0$ above the lower envelope $1/(t\log t)$,
% \[
% A(t_o)\asymp t_o^{\Gamma-\gamma}\1\{\Gamma\in [0,\gamma)\}+\log(1/t_o)\1\{\Gamma=\gamma\} + \1\{\Gamma>\gamma\}.
% \]
To obtain the order of \(B(t_o)\), note that the two spatial integrals decouple as the $\mathbf{w}$-integral over space depends on $\d_t(w,v)$,
but after integration over $w\in\mathbb{T}_t^d$ the result is independent of the position $v$ by translation invariance.
Thus, we may write
\begin{equation}\label{eq:defB(to)}
	B(t_o)=\int_{1/(t\log t)}^1 \kappa_t(t_o,t_v)\,h_t(t_v)\d t_v, \qquad h_t(t_v):=\int_{1/(t\log t)}^1 \kappa_t(t_w,t_v)\d t_w .
\end{equation}
Splitting according to whether $\mathbf{w}$ is younger or older than $\mathbf{v}$ and using \eqref{eq:kappa-open},
\[
	\begin{aligned}
		h_t(t_v)
		&
			\asymp\int_{t_v}^{1}{t_v^{-\gamma} t_w^{\gamma-1}} \d t_w
			+\int_{1/(t\log t)}^{t_v} t_w^{\Gamma-\gamma}\,t_v^{-(1-\gamma+\Gamma)} \d t_w
		%\\ &
		%	\asymp \frac{t_v^{-\gamma}-1}{\gamma}+1
        %\\ &
			\asymp t_v^{-\gamma},
	\end{aligned}
\]
implying
\[
B(t_o)\asymp \int_{1/(t\log t)}^{1} \kappa_t(t_o,t_v)\;t_v^{-\gamma}\d t_v
= I_1(t_o)+I_2(t_o),
\]
where
\[
\begin{aligned}
    I_1(t_o)
    &= \int\limits_{1/(t\log t)}^{t_o}\Theta\Big({t_v^{-\gamma} t_o^{\gamma-1}}\Big)\,t_v^{-\gamma}\d t_v
     \asymp
    {t_o^{\gamma-1}}\int\limits_{1/(t\log t)}^{t_o}t_v^{-2\gamma}\d t_v
    &&(\text{forward: }t_v<t_o),\\
    I_2(t_o)
    &= \int_{t_o}^{1}\Theta\big(t_o^{\Gamma-\gamma}\,t_v^{-(1-\gamma+\Gamma)}\big)\,t_v^{-\gamma}\d t_v
    \asymp
    t_o^{\Gamma-\gamma}\int_{t_o}^{1}t_v^{-(1+\Gamma)}\d t_v
    &&(\text{reciprocal: }t_v>t_o).
\end{aligned}
\]
Straight-forward integration yields 
\[
    I_1(t_o) 
    \asymp
    \begin{cases}
        t_o^{-\gamma}, & \text{ if } \gamma<1/2,
        \\
        t_o^{-1/2}\log(t_o t), & \text{ if } \gamma=1/2,
        \\
        t_o^{\gamma-1}(t\log t)^{2\gamma-1}, & \text{ if } \gamma>1/2.
    \end{cases}
\]
Similarly,
\[
    I_2(t_o)
    \asymp
    \begin{cases}
        t_o^{-\gamma}\log(1/t_o), & \text{ if } \Gamma=0,
        \\
        t_o^{-\gamma}, & \text{ if } \Gamma>0.
    \end{cases}
\]
Consequently,
\begin{equation} \label{eq:orderB(to)}
	\begin{aligned}
	B(t_o) 
	&
	=
	\begin{cases}
		\Theta(t_o^{-\gamma}(\log(1/t_o))^{\1\{\Gamma=0\}}), & \text{ if }\gamma<1/2,
		 \\
		 \Theta(t_o^{-1/2}\log(t_o t)))+\Theta(t_o^{-1/2}(\log(1/t_o))^{\1\{\Gamma=0\}}),& \text{ if }\gamma=1/2,
		 \\ 
		 \Theta(t_o^{\gamma-1}(t\log t)^{2\gamma-1}) + \Theta(t_o^{-\gamma}(\log(1/t_o))^{\1\{\Gamma=0\}}), & \text{ if }\gamma>1/2.
	\end{cases}
	\\ &
	\asymp
	\begin{cases}
		t_o^{-\gamma}(\log(1/t_o))^{\1\{\Gamma=0\}}, & \text{ if }\gamma<1/2,
		 \\
		 t_o^{-1/2}\log(t_o t), \ \phantom{\Theta ()+\Theta(t_o^{-1/2}(\log(1/t_o))^{\1\{\Gamma=0\}}} & \text{ if }\gamma=1/2,
		 \\ 
		 t_o^{\gamma-1}(t\log t)^{2\gamma-1}, & \text{ if }\gamma>1/2.
	\end{cases}
	\end{aligned}
\end{equation}
To finish the proof, it remains to combine the derived asymptotics to obtain the main order of
\begin{equation*}
\int_{1/(t\log t)}^1 S(t_o)\d t_o
\;\asymp\;
\int_{1/(t\log t)}^1 B(t_o)\ A(t_o)\d t_o.
\end{equation*}

\begin{description}
	\item[Case $\boldsymbol{\gamma<1/2}$.]
		The product $B(t_o)A(t_o)$ is integrable near zero for every $\Gamma\ge0$. For $\Gamma>\gamma$, the integrand is $t_o^{-\gamma}$. For $\Gamma=\gamma$, it is $t_o^{-\gamma}\log(1/t_o)$, and for $0\leq \Gamma<\gamma$, it is $t_o^{\Gamma-2\gamma}(\log(1/t_o))^{\1\{\Gamma=0\}}$, which is integrable since \(\Gamma-2\gamma\geq -2\gamma>-1\). Therefore, integrating with respect to \(t_o\) yields \(\int_{1/(t\log t)}^1 B(t_o)\ A(t_o)\d t_o=\Theta(1)\).
	\item[Case $\boldsymbol{\gamma=1/2}$.] 
        For $\Gamma>1/2$, the product is of order \(\Theta(t_o^{-1/2}\log t)\), which integrates (with respect to \(t_o\) to \(\Theta(\log t)\). The same applies for \(\Gamma=\gamma=1/2\) as the additional \(\log(1/t_o)\) term does not affect integrability. For $0<\Gamma<1/2$, we get $\log(t)\int_{1/(t\log t)}^1 t_o^{\Gamma-1}\d t_o = \Theta(\log t)$. For $\Gamma=0$, the integral is $\log(t)\int_{1/(t\log t)}^1 t_o^{-1}\d t_o = \Theta(\log(t)^2)$.
	\item[Case $\boldsymbol{\gamma>1/2}$.] 
        For $\Gamma\geq \gamma$, the integral becomes $\Theta((t\log t)^{2\gamma-1})$, since \(t_o^{\gamma-1}\log(1/t_o)\) is integrable. For $0\leq \Gamma<\gamma$, we get $t^{2\gamma-1}\int_{1/(t\log t)}^1 t_o^{\Gamma-1}\d t_o = \Theta(t^{2\gamma-1}(\log t)^{\1\{\Gamma=0\}})$.    \end{description} 
Summarising the above calculations yields~\eqref{eq:lemSopenAsymp}. The uniform boundedness in \(t\) in case \(\gamma<1/2\) is a result of~\eqref{eq:lemSopenAsymp} together with the observation that the spatial integration terms absorbed in the $\Theta$ notation are uniformly bounded by \(c_\rho\).
\end{proof}

\begin{lemma}\label{lem:orderSclosed}
Assume \(\gamma\in[1/2,1)\) and consider \(S^c(t_o)\) as defined in~\eqref{eq:defSclosed}. 
\begin{enumerate}[(i)]
	\item 
		If \(\Gamma>0\), we have
		\begin{equation*}
			\int\limits_{1/(t\log t)}^1 S^c(t_o)\d t_o
			=
			\begin{cases}
				O(1), & \gamma<2/3,
				\\
				O(\log t), & \gamma=2/3,
				\\
				O((t\log t)^{3\gamma-2}), & 2/3<\gamma<1.
			\end{cases}
		\end{equation*}
	\item
		If \(\Gamma=0\), we have instead
		\[
			\int\limits_{1/(t\log t)}^1 S^c(t_o)\d t_o
			=
			\begin{cases}
				O(1), &  \gamma<2/3,
				\\
				O(\log t), & \gamma =2/3
				\\
				O((t\log t)^{3-2/\gamma}), & \gamma>2/3.
			\end{cases}
		\]
		In particular, for all \(\gamma>2/3\), we have \(\int_{1/(t\log t)}^1 S^c(t_o)\d t_o=o((t\log t)^{2\gamma-1})\).
\end{enumerate}
    
\end{lemma}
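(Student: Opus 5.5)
The plan follows the proof of Lemma~\ref{lem:orderSopen}. First I use Mecke's equation to write
\[
S^c(t_o)=\int \P(\0\to\u)\P(\0\to\v)\P(\w\to\u)\P(\w\to\v)\d\u\d\v\d\w .
\]
Unlike the open case, the $\w$-integral now involves two arcs, to $\u$ and to $\v$, so it no longer decouples. For upper bounds this is harmless.

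To handle the spatial coupling, I bound the product $\rho(\beta^{-1}a(t_w,t_u)\d_t(w,u)^d)\,\rho(\beta^{-1}a(t_w,t_v)\d_t(w,v)^d)$ by the minimum of the two factors, and then use the following geometric device. I split according to which of $\d_t(o,u),\d_t(o,v)$ is larger relative to the triangle inequality, as in the proof of Lemma~\ref{lem:twoConnect}(a). Equivalently, I integrate $w$ out against one arc and keep the other arc with $\rho\le 1$, which gives the crude bound
\[
S^c(t_o)\lesssim \int \kappa_t(t_o,t_u)\kappa_t(t_o,t_v)\,\min\{\kappa_t(t_w,t_u)\Pi(t_w,t_v),\,\kappa_t(t_w,t_v)\Pi(t_w,t_u)\}\d t_u\d t_v\d t_w .
\]
This alone is too weak. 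A better route is to place both $\u$ and $\v$ relative to $\w$ first, integrating out $u$ and $v$ against the $\w$-arcs, and then integrate $w$ against a single $\0$-arc, using $\rho\le1$ for the remaining $\0$-arc only on the part of the domain where that is sharp. Concretely, I integrate $o$-relative positions: $\int\!\!\int \rho_1(w\!-\!u)\rho_2(w\!-\!v)\rho_3(u)\rho_4(v)\,\d u\,\d v\,\d w \le \min_{i}(\cdots)$, where the best bound keeps the two smallest spatial scales $a^{-1}$ and drops the largest one via $\rho\le 1$. This yields a purely temporal integral over $(t_u,t_v,t_w)$ with kernels of the form $\Pi/a$ from \eqref{eq:kappa-open}, times one factor $\Pi$ without the $1/a$.

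Next I split by the age ordering of $\0,\w$ relative to $\u,\v$. The dominant configurations for $\gamma\ge 1/2$ are those where $\u,\v$ are the oldest vertices and $\0,\w$ are young, with forward arcs and no $\Pi$ penalty. In this case the integral becomes
\[
t_o^{2\gamma-2}\int t_w^{2\gamma-2}\,t_u^{-\gamma}t_v^{-\gamma}\min\{\dots\}\d t_u\d t_v\d t_w ,
\]
and the minimum costs an extra factor $(t_u\vee t_v)^{-\gamma}$ in place of one full $\kappa$. Integrating $t_u,t_v$ down to $1/(t\log t)$ produces the power $(t\log t)^{3\gamma-2}$ once $\gamma>2/3$, a logarithm at $\gamma=2/3$, and $O(1)$ below that, which matches part~(i). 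Configurations that use reciprocal arcs carry $\Pi=(t_x/t_y)^\Gamma$, which makes the integrals over the younger vertices' marks converge with room to spare when $\Gamma>0$, so they are of lower order. When $\Gamma=0$ this extra convergence disappears. The case where $\0$ or $\w$ is older than $\u,\v$ then contributes through integrals such as $\int t_o^{-1}\cdots$, and a careful optimisation of the cutoff (the region where the crude minimum bound switches) gives the modified exponent $3-2/\gamma$. I would check this by substituting $t_u=s^{1/\gamma}$-type scalings.

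Finally, the comparison statement follows by checking exponents: $3\gamma-2<2\gamma-1$ and $3-2/\gamma<2\gamma-1$ (the latter is equivalent to $(\gamma-1)^2>0$, after multiplying by $\gamma$) hold for all $\gamma<1$. The logarithmic factors are also absorbed, since the $\Gamma=0$ open count carries an extra $\log t$. I expect the main obstacle to be the spatial decoupling step, namely producing a bound of the form ``two of the three spatial scales integrate, the third is bounded by $1$'' that is sharp enough in every age ordering, especially in the $\Gamma=0$ case where the exponent $3-2/\gamma$ must emerge. I would first try the triangle-inequality splitting from Lemma~\ref{lem:twoConnect}(a) applied to the pair $(u,v)$ seen from $o$ and from $w$.
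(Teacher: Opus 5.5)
Your core device is sound, but it is a genuinely different route from the paper's.

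\textbf{How the paper argues.} For $\Gamma>0$ the paper bounds the $\w$-integral by $\min\{\kappa_t(t_w,t_u),\kappa_t(t_w,t_v)\}$, so it always discards the spatial constraint of one $\w$-arc. This is essentially your first, rejected bound. It then orders $t_u\le t_v$ and reduces $S^c(t_o)$ to $2\int\kappa_t(t_o,t_v)U_t(t_o,t_v)h_t(t_v)\d t_v$. For $\Gamma=0$ it instead dominates the kernel by the product kernel $\rho(\beta^{-1}t_u^\gamma t_v^\gamma\,\mathrm{d}_t(u,v)^d)$, which is legitimate for $\gamma\ge1/2$.

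\textbf{How your route differs, and why it matters.} You integrate along a spanning path of the $4$-cycle $\0\u\w\v$ (three kernels are integrated, not two as you wrote). You discard whichever arc has the largest scale $V_e=t_{\mathrm{old}}^{-\gamma}t_{\mathrm{young}}^{\gamma-1}$. This is uniform in $\Gamma$, and it adapts the discarded arc to the age ordering.

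That adaptivity is not cosmetic. When $\0$ is the oldest and $\w$ the youngest vertex, the largest scale sits on an $\0$-arc, whereas the paper discards a $\w$-arc. The paper's step $U_t(t_o,s)=O(1)$ for $s\ge t_o$ holds only for $\Gamma>\gamma$. For $0<\Gamma<\gamma$ one has $U_t(t_o,1)\asymp t_o^{\Gamma-\gamma}$, which is the mean reciprocal outdegree computed in~\eqref{eq:degCalc1}.

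Consequently, for $\Gamma<\gamma/2$ the reciprocal part of the paper's bound is of order $t_o^{2(\Gamma-\gamma)}$. After integrating over $t_o$, it exceeds the orders stated in (i) when $\Gamma$ is small. It does remain $o((t\log t)^{2\gamma-1})$, so Theorem~\ref{thm:globIC} is unaffected. Your remark that the plain minimum bound is too weak is therefore correct, and your version is what actually delivers (i) for all $\Gamma>0$.

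\textbf{Your treatment of $\Gamma=0$ is misdirected.} The exponent $3-2/\gamma$ will not emerge from your bound, and no cutoff optimisation is needed. With $\Gamma=0$ your integrand is invariant under exchanging the pairs $\{\0,\w\}$ and $\{\u,\v\}$. So the mirror configuration ($\0,\w$ old, $\u,\v$ young) has the same exponent $3\gamma-2$ as the forward one, and nothing is larger.

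Concretely, set $y_i=-\log t_i$. Use the identity $V_e=(t_it_j)^{-1/2}(t_{\mathrm{young}}/t_{\mathrm{old}})^{\gamma-1/2}$ and the bound $\max_eV_e\ge(\min_it_i)^{-\gamma}$. These bound the integrand, with respect to $\prod_i\d y_i$, by
\[
\exp\big((\gamma-\tfrac12)\textstyle\sum_e|y_i-y_j|-\gamma\max_iy_i\big)\le\exp\big((3\gamma-2)\max_iy_i\big).
\]
Equality holds only on the alternating pattern, with linear decay away from it for $\gamma>1/2$; at $\gamma=1/2$ the factor $e^{-\max_iy_i/2}$ alone suffices. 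This yields $O(1)$, $O(\log t)$ and $O((t\log t)^{3\gamma-2})$ for every $\Gamma\ge0$.

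The paper's exponent $3-2/\gamma$ is an artefact of its product-kernel comparison. Since $3\gamma-2\le3-2/\gamma$ on $[2/3,1)$, your bound gives (ii) a fortiori. What remains for you is this exponent bookkeeping, including the transverse decay that rules out extra logarithms, rather than a separate analysis of $\Gamma=0$.
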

\begin{proof}
We separately treat the cases \(\Gamma=0\) and \(\Gamma>0\).
\paragraph{The case \(\boldsymbol{\Gamma>0}\).}
Recall the spatial integration kernel \(\kappa_t\) and its asymptotics~\eqref{eq:kappa-open} from the previous proof. Further recall that, during the course of the proof, all birth times are larger than \(1/(t\log t)\) and recall the quantity under consideration
\[
	S^c(t_o)= \int\limits_{(\bbT_t^d\times(1/t,1))^3} \!\!\! \P_{\o,\u}(\0\to\u)\,\P_{\o,\v}(\0\to\v)\,\P_{\w,\u}(\w\to\u)\,\P_{\w,\v}(\w\to\v)\d \u \d \v\d \w.
\]
To keep notation concise, we abbreviate \(\bar t=1/(t\log t)\) in the following. Write, for fixed locations $(u,v)$ and birth times $(t_u,t_v,t_w)$,
\[
K_t(u,v;t_u,t_v,t_w)
:=
\int_{\bbT_t^d} \P_{\w,\u}(\w\to\u)\,\P_{\w,\v}(\w\to\v)\d w.
\]
Clearly, $\P_{\w,\u}(\w\to\u)\P_{\w,\v}(\w\to\v)\le \P_{\w,\u}(\w\to\u)$ as well as
$\P_{\w,\u}(\w\to\u)\P_{\w,\v}(\w\to\v)\le \P_{\w,\v}(\w\to\v)$, hence
\begin{equation}\label{eq:Sc_upperbound}
    \begin{aligned}
        K_t(u,v;t_u,t_v,t_w)
        &
            \le \min\!\Big\{\int_{\bbT_t^d}\P_{\w,\u}(\w\to\u)\d w,\;\int_{\bbT_t^d}\P_{\w,\v}(\w\to\v)\d w\Big\}
        \\ &
            =\min\big\{\kappa_t(t_w,t_u),\kappa_t(t_w,t_v)\big\}.
    \end{aligned}
\end{equation}
Moreover, for each fixed $t_w$, the map $s\mapsto \kappa_t(t_w,s)$ is non-increasing on $(\bar t,1)$
(this is immediate from \eqref{eq:kappa-open}), so after symmetrising over $t_u,t_v$, we may assume
$t_u\le t_v$ and obtain
\[
\min\big\{\kappa_t(t_w,t_u),\kappa_t(t_w,t_v)\big\}
=
\kappa_t(t_w,t_v).
\]
Using translation invariance just like in the previous proof (so that the spatial integrals factorise), we therefore get the bound
\begin{equation}\label{eq:Sc-master}
	\begin{aligned}
		S^c(t_o) 
            &
                \leq 2\int\limits_{\bar t}^1 \! \d t_v \int\limits_{\bar t}^{t_v} \! \d t_u \int\limits_{\bar t}^1 \! \d t_w \Big[\int\limits_{\bbT_t^d}\! \d v\Big(\P_{\o,\v}(\o\to\v) \! \int\limits_{\bbT_t^d}\! \d w \ \P_{\w,\v}(\w\to\v)\Big)\! \int\limits_{\bbT_t^d}\! \d u\ \P_{\o,\u}(\o\to\u)\Big]
		\\ &  
    		= 2\int\limits_{\bar t}^1 \d t_v \ \kappa_t(t_o,t_v)\,U_t(t_o,t_v)\,h_t(t_v),
\end{aligned}
\end{equation}
where 
\[
U_t(t_o,s):=\int\limits_{\bar t}^{s}\kappa_t(t_o,r)\d r
\quad \text{ and }\quad 
h_t(s):=\int\limits_{\bar t}^{1}\kappa_t(t_w,s)\d t_w.
\]
We now derive the orders of the quantities that build \(S^c(t_o)\). First, one checks, just like in the open bow-tie case, from \eqref{eq:kappa-open} that
\begin{equation}\label{eq:ht-bound}
    h_t(s)=\Theta(s^{-\gamma}), \qquad \text{uniformly for } s\in[\bar t,1],
\end{equation}
since the forward part $t_w>s$ contributes $\Theta(s^{-\gamma})$ and the reciprocal part $t_w<s$ is $\Theta(1)$. To obtain a bound on \(U_t\), we start with the case $s\le t_o$, and observe that $\0\to (\,\cdot\,,r)$ is forward for all $r\le s$ so that \eqref{eq:kappa-open} gives
\begin{equation}\label{eq:Ut-small-s}
    U_t(t_o,s) \asymp t_o^{\gamma-1} \!\!\! \int\limits_{1/(t\log t)}^{s} \!\!\! r^{-\gamma}\d r \asymp t_o^{\gamma-1}\,s^{1-\gamma},
\qquad s\le t_o.
\end{equation}
If $s>t_o$, then splitting at $t_o$ yields the basic bound
\begin{equation}\label{eq:Ut-large-s}
%U_t(t_o,s)=O\big(t_o^{-\gamma}\big)
%\quad\text{for }\Gamma=0,
%\qquad
U_t(t_o,s)=O(1)\quad\text{since }\Gamma>0,
\end{equation}
uniformly over $s\in[t_o,1]$ (the case $\Gamma=\gamma$ only inserts an extra $\log(s/t_o)$, which does not affect integrability). Now, split the integral \eqref{eq:Sc-master} at $t_v=t_o$. For the forward region \(t_v<t_o\) we get, using~\eqref{eq:kappa-open} (forward), \eqref{eq:ht-bound}, and~\eqref{eq:Ut-small-s},
		\[
			\kappa_t(t_o,t_v)\,U_t(t_o,t_v)\,h_t(t_v)
			\asymp {t_v^{-\gamma} t_o^{\gamma-1}}\cdot t_o^{\gamma-1}t_v^{1-\gamma} \cdot t_v^{-\gamma}
			\asymp t_o^{-2(1-\gamma)}\,t_v^{1-3\gamma}.
		\]
		Hence, recalling \(\gamma\geq 1/2\) and \(\bar t=1/(t\log t)\),
			\begin{equation}\label{eq:Sc-pointwise-Gpos}
				S^c(t_o)\big|_{\text{forward}} \leq  C\,t_o^{-2(1-\gamma)}\int\limits_{\bar t}^{t_o} t_v^{1-3\gamma}\d t_v
                =
                \begin{cases}
				O(t_o^{-\gamma}), & \gamma<2/3,
				\\
				O\big(t_o^{-2/3}\log t\big), & \gamma=2/3,
				\\
				O\big(t_o^{-2(1-\gamma)}(t\log t)^{3\gamma-2}\big), & \gamma>2/3.
			\end{cases}
			\end{equation}
	For the reciprocal region \(t_v\geq t_o\), we similarly obtain, using~\eqref{eq:kappa-open} (reciprocal), \eqref{eq:ht-bound}, and~\eqref{eq:Ut-large-s},
		\[
			\kappa_t(t_o,t_v)\,U_t(t_o,t_v)\,h_t(t_v) \;=\; O\big(t_o^{\Gamma-\gamma}\,t_v^{-(1-\gamma+\Gamma)}\cdot t_v^{-\gamma}\big) = O\big(t_o^{\Gamma-\gamma}\,t_v^{-(1+\Gamma)}\big).
		\]
		Integrating $t_v\in[t_o,1]$ yields $O(t_o^{-\gamma})$, hence this part is never of larger order than \eqref{eq:Sc-pointwise-Gpos}.
        The desired bound of Part~(i) is then obtained by integrating \eqref{eq:Sc-pointwise-Gpos} over $t_o\in[\bar t,1]$.

\paragraph{The case $\boldsymbol{\Gamma=0}$.}
We have to argue differently in this case. Since all arcs point in both directions, the graph is effectively undirected so that the Bound~\eqref{eq:Sc_upperbound} is not precise enough. Instead, we make use of the facts that \(\gamma\geq 1/2\) and that the model is undirected so that we can write and dominate the connection probability as
\begin{equation*}
	\begin{aligned}
		\P_{\u,\v}(\u\leftrightarrow\v)
		&
			= \rho\big(\beta^{-1}(t_u\wedge t_v)^\gamma(t_u\vee t_v)^{1-\gamma}\d_t(u,v)^d\big)\leq \rho\big(\beta^{-1}t_u^\gamma t_v^{\gamma}\d_t(u,v)^d\big)=:\tilde\varphi(\u,\v),
	\end{aligned}
\end{equation*}
where we use the explicit form \(\rho(x)=1\wedge x^d\) in the following. The proof is now performed by calculating the order of closed bow-ties in the model \(\tilde\varphi\) under the birth-time truncation \(\bar t=1/(t\log t)\). Using Mecke's equation and the product structure of the birth-times in the dominating connection probability and translation-invariance, we infer
\[
	\begin{aligned}
		\int\limits_{\bar t}^1 S^c(t_o) \d t_o
		& 
			\leq \int\limits_{\bar t}^1 \d t_o \hspace{-0.3 cm}\int\limits_{(\bbT_t^d\times(\bar t,1))^3}\hspace{-0.4 cm} \d \y \d \u\d \v \ \tilde\varphi(\o,\u)\tilde\varphi(\o,\v)\tilde\varphi(\y,\u)\tilde\varphi(\y,\v)
		\\ &
			\leq C\int\limits_{\bar t}^1 \d t_o  \hspace{-0.3cm}\int\limits_{\bbT_t^d\times(\bar t,1)} \hspace{-0.3 cm} \d \y \ \Big(\hspace{-0.3cm }\int\limits_{\bbT_t^d\times(\bar t,1)} \hspace{-0.4cm} \d\u \  \rho\big(\beta^{-1}t_o^\gamma t_u^{\gamma}|u|^d\big)\rho\big(\beta^{-1}t_u^\gamma t_y^{\gamma}|y|^d\big)\Big)^2
		\\ &
			\leq C \int\limits_{\bar t}^1 \d t_o \ t_o^{-\gamma} \hspace{-0.3 cm}\int\limits_{\R^d\times(\bar t,1)} \hspace{-0.3cm}\d \y \ \Big(\int\limits_{\bar t}^1\d t_u \ t_u^{-\gamma}\big(1\wedge(t_y^\gamma t_u^\gamma |y|^d)^{-\delta}\big)\Big)^2
		\\ &
			\leq C \hspace{-0.3cm }\int\limits_{\R^d\times(\bar t,1)} \hspace{-0.3 cm} \d \y \ \Big(\int\limits_{\bar t}^1\d t_u \ t_u^{-\gamma}\big(1\wedge(t_y^\gamma t_u^\gamma |y|^d)^{-\delta}\big)\Big)^2,
	\end{aligned}
\]
where \(C>1\) does not depend on \(t\) and may have changed from line to line. For the integral under the square, we split the integration domain according to the minimum and get
\[
	\begin{aligned}
		\Big(\int\limits_{\bar t}^1 
		& 
			\d t_u \ t_u^{-\gamma}\big(1\wedge(t_y^\gamma t_u^\gamma |y|^d)^{-\delta}\big)\Big)^2
		\\ &
			\leq C\Big(\1_{\{|y|^d<t_y^{-\gamma}\}}\Theta(1) + \1_{\{t_y^{-\gamma}<t_y^{-\gamma}/\bar t\}}\Big[\int\limits_{\bar t}^{t_y^{-1}|y|^{-d/\gamma}} \hspace{-0.3 cm} \d t_u \ t_u^{-\gamma} + |y|^{-d\delta}t_y^{-\gamma\delta}\int\limits_{t_y^{-\gamma}|y|^{-d/\gamma}}^1 \hspace{-0.3 cm} \d t_u \ t_u^{-\gamma(\delta+1)}\Big]^2\Big)
		\\ &
			\leq C t_y^{2(\gamma-1)}|y|^{2d(1-1/\gamma)}.
	\end{aligned}
\]
Plugging this back into the first integral yields the upper bound
\[
	\begin{aligned}
		\int\limits_{\bar t}^1 S^c(t_o) \d t_o
		&
			\leq  C\int\limits_{\bar t}^1 \d t_y \Big[t_y^{-\gamma}+ \hspace{-0.5cm}\int\limits_{t_y^{-\gamma}\leq |y|^d\leq t_y^{-\gamma/d}/\bar t} \hspace{-0.5 cm}\d r \ \  t_y^{2(\gamma-1)/d}r^{d(3-2/\gamma)-1}\Big]
		% \\ &
			\leq \begin{cases}
				O(1), &  \gamma<\tfrac{2}{3},
				\\
				O(\log(t)), &  \gamma=\tfrac{2}{3},
				\\
				O((t\log t)^{3-2/\gamma}, & \gamma>\tfrac{2}{3},
			\end{cases}	
	\end{aligned}
\]
as claimed. The final statement follows from the simple observation that \(2\gamma-1>3-2/\gamma\) for \(\gamma<1\). This concludes the proof.
\end{proof}

\begin{lemma}\label{lem:open-bowtie-conc}
Assume $\gamma\ge \tfrac12$ and recall \(Y_t\), introduced in~\eqref{eq:defY_t} including the birth-time truncation at $1/(t\log t)$.
Then, \({Y_t}/{\E Y_t}\to1\), in probability, as \(t\to\infty\). In particular, for every fixed $\varepsilon\in(0,1)$,
\[
	\lim_{t\to\infty}\P\big(Y_t<\varepsilon\,\E Y_t\big)= 0.
\]
\end{lemma}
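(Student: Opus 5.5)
We prove the statement with a second-moment argument. By Chebyshev's inequality, it suffices to show that \(\Var(Y_t)=o\big((\E Y_t)^2\big)\). The "in particular" claim then follows at once, since for \(\varepsilon<1\) we have \(\P(Y_t<\varepsilon\E Y_t)\le \P(|Y_t/\E Y_t-1|>1-\varepsilon)\). Recall from Lemma~\ref{lem:orderSopen} that, for \(\gamma\ge 1/2\),
\[
\E Y_t = t\int_{1/(t\log t)}^1 S(t_o)\d t_o \asymp t\,L_t,
\]
where \(L_t\to\infty\). Concretely, \(L_t\) is of order \((\log t)^{1+\1\{\Gamma=0\}}\) if \(\gamma=1/2\), and of order \((t\log t)^{2\gamma-1}(\log t)^{\1\{\Gamma=0\}}\) if \(\gamma>1/2\). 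The target is therefore \(\E[Y_t^2]-(\E Y_t)^2=o(t^2L_t^2)\).

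\textbf{Expanding the second moment.} Write \(Y_t\) as a sum over ordered quadruples \((\x,\u,\v,\w)\) of open bow-tie configurations. Then
\[
Y_t^2=\sum_{Q,Q'}\1_Q\1_{Q'}
\]
runs over pairs of quadruples. We group these pairs by the overlap pattern, that is, by which vertices of \(Q'\) coincide with vertices of \(Q\). For each pattern, we apply the multivariate Mecke formula with the appropriate number (between four and eight) of integrated vertices.

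For the pattern with no shared vertices, the eight-fold Mecke formula gives a contribution of exactly \((\E Y_t)^2\). The reason is that the arc indicators of disjoint vertex pairs are conditionally independent given positions and marks, and the integration domains factorise. This term therefore cancels against \((\E Y_t)^2\) in the variance.

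\textbf{Bounding the overlap terms.} It remains to bound the terms in which the two quadruples share at least one vertex. Each such term is the expected count of a connected "double bow-tie" motif on at most seven vertices. Its arc probabilities are independent except when both \(\y\to\z\) and \(\z\to\y\) are required for some pair. In that case we use \(\P(\y\to\z,\z\to\y)\le\P(\y\to\z)\), and we drop any duplicated arc.

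Every such motif has a single free translation, which contributes a factor \(t\). The rest of the integral factorises along a tree after discarding enough arcs, exactly as in the proof of Lemma~\ref{lem:orderSopen}: we use the kernel \(\kappa_t\) from \eqref{eq:kappa-open} together with \(h_t(s)\asymp s^{-\gamma}\). After this reduction, each overlap term is bounded by \(t\) times an iterated birth-time integral of products of \(\kappa_t\)'s.

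The key estimate is that each such integral is \(O(L_t^2/\log t)\), or more generally \(o(L_t^2)\), so that each overlap term is \(o(t^2L_t^2)\). Heuristically, a merged motif has one fewer "independent hub" than two disjoint bow-ties. A large factor \((t\log t)^{2\gamma-1}\) arises from an integral of the form \(\int_{\bar t}t_v^{-2\gamma}\), that is, from an old vertex \(\v\) with two in-arcs. A shared configuration can produce at most roughly twice this growth while the geometry supplies only one factor of \(t\). Since the disjoint case yields \(t^2\), the ratio is \(O(L_t^2/t)\), or \(O(t^{2\gamma-1}\cdot\mathrm{polylog}/t)\), which tends to zero because \(\gamma<1\).

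The worst pattern is the one where the two bow-ties share the oldest vertex \(\v\) and this vertex collects up to four in-arcs. Its contribution is bounded by
\[
t\int_{\bar t} t_v^{-4\gamma}\cdots \d t_v\lesssim t\,(t\log t)^{4\gamma-1}.
\]
This must be compared with \(t^2L_t^2\asymp t^2(t\log t)^{4\gamma-2}\), and the ratio is \(O((\log t)/1)\cdot t^{-1}\cdot t\), which has to be checked carefully.

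\textbf{Main obstacle.} The main obstacle is exactly this bookkeeping: enumerating the finitely many overlap patterns and verifying, pattern by pattern, that the power of the truncation scale \(\bar t^{-1}=t\log t\) produced by shared old vertices never makes up for the lost factor \(t\). To organise this, we would first reduce every pattern, by deleting arcs, to a tree motif in which each vertex has bounded in-degree. We would then bound the tree integral by peeling off leaves successively using \(\kappa_t\). Shared old vertices with many in-arcs give the powers \(\bar t^{1-k\gamma}\), and we would check that the exponent is always strictly smaller than needed. If the crude deletion of arcs turns out to be too lossy for the pattern where \(\v\) is shared, we would retain the spatial constraint on that vertex to gain an additional factor. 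The final step is to sum the finitely many patterns, which gives \(\Var(Y_t)=o((\E Y_t)^2)\).
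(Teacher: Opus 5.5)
\textbf{There is a genuine gap, and it cannot be closed for $\gamma>1/2$.} Your reduction matches the paper's: Chebyshev, the multivariate Mecke formula, and the observation that vertex-disjoint pairs of bow-ties contribute exactly $(\E Y_t)^2$. The gap is the step you call the key estimate, that every overlap term is $o((\E Y_t)^2)$. That estimate is false, and your own worst-case computation already shows it.

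When the two bow-ties share only $\v$, the motif is a star at $\v$ with in-arcs from four distinct vertices $\x,\w,\x',\w'$, plus the leaves $\u,\u'$. All six arcs join distinct pairs, so nothing is discarded and your tree bound is sharp up to constants. Keep only forward arcs and let $\x,\w,\x',\w'$ be born in $(\tfrac12,1)$ and $\v$ before $\tfrac14$. Then the Mecke integral is at least $c\,t\int_{\bar t}^{1/4}t_v^{-4\gamma}\d t_v\asymp t\,(t\log t)^{4\gamma-1}$. On the other side, $(\E Y_t)^2\asymp t^2(t\log t)^{4\gamma-2}$; this is for $\Gamma>0$, and for $\Gamma=0$ the same factor $(\log t)^2$ appears on both sides.

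The ratio is $\log t\to\infty$, so for $\gamma>1/2$ you get $\Var(Y_t)\ge c\log t\,(\E Y_t)^2$. No finer bookkeeping and no retained spatial constraint can rescue Chebyshev. Your heuristic that an overlap costs a factor $t$ misses the effect of a single old vertex with four in-arcs. It produces $\int_{\bar t}t_v^{-4\gamma}\d t_v$, which is of order $\bar t^{-1}=t\log t$ times $\bigl(\int_{\bar t}t_v^{-2\gamma}\d t_v\bigr)^2$.

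The paper follows the same second-moment route and does not supply this estimate either. Its Step~3 claims that each overlap pattern with $\x\neq\x'$ is at most $C\,t\,G_t^2$. For the shared-$\v$ pattern, that would mean bounding a single $t_v$-integral of $h_t(t_v)^2$ times two kernels by a product of two $S$-integrals, which fails by the same factor.

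In fact the statement itself cannot hold for $\gamma>1/2$. The mean $\E Y_t$ is carried by vertices with $t_v\in[\bar t,K\bar t]$, and that window contains a vertex only with probability $O(K/\log t)$. Concretely:
\begin{itemize}
\item With probability $e^{-\varepsilon}$, no vertex is born before $\varepsilon/t$.
\item On this event, $Y_t$ coincides with its analogue truncated at $\varepsilon/t$.
\item Redoing the computation of Lemma~\ref{lem:orderSopen} with $\bar t$ replaced by $\varepsilon/t$, that analogue has mean $O(\varepsilon^{1-2\gamma}t^{2\gamma}(\log t)^{\1\{\Gamma=0\}})=o(\E Y_t)$.
\item Markov's inequality, followed by $\varepsilon\downarrow0$, then gives $Y_t/\E Y_t\to0$ in probability.
\end{itemize}
Only at $\gamma=1/2$ is the shared-$\v$ term of smaller order, namely $(\E Y_t)^2/\log t$. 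So the second-moment approach is viable only at $\gamma=1/2$.
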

\begin{proof}
We work with \emph{ordered} open bow-ties and keep the birth-time truncation at $\bar t = 1/(t\log t)$. Here, ordered means that the quadruple \((\x,\u,\v,\w)\) forms an open bow-tie if \(\x\to\u\), \(\x\to\v\), and \(\w\to\v\). Recall \(S(t_o)\) introduced in~\eqref{eq:defS} and the relation
\begin{equation}\label{eq:EYt}
	\E Y_t = t\int_{\bar t}^1 S(t_o)\,\d t_o=:t\,G_t.
\end{equation}
For $\gamma\ge\frac12$, we have $G_t\to\infty$ by Lemma~\ref{lem:orderSopen}. In order to prove the lemma it suffices to prove $\Var(Y_t)=o((\E Y_t)^2)$ as \(Y_t\) is non-negative. The proof now consists of four steps.

\paragraph{Step 1: variance reduction.}
Write 
	\[
		Y_t^2=\sum_{b }\sum_{b'} \1\{b \text{ forms open bow-tie}\}\1\{b'\text{ forms open bow-tie}\},
	\] 
where $b,b'$ range over ordered quadruples $(\x,\u,\v,\w)$ and $(\x',\u',\v',\w')$ (where all vertices that belong to the same quadruple are distinct). 
Split pairs $(b,b')$ into
\begin{itemize}
\item[(D)] \emph{disjoint pairs:} the two bow-ties use $8$ distinct vertices;
\item[(O)] \emph{overlap pairs:} the two bow-ties share at least one vertex (no further restriction).
\end{itemize}
The disjoint contribution equals $(\E Y_t)^2$ by the multivariate Mecke formula, hence
\begin{equation}\label{eq:var-basic}
    \Var(Y_t) =\E[Y_t^2]-(\E Y_t)^2 \le \E Y_t + \E R_t,
\end{equation}
where $R_t$ is the total overlap-pair count.

\paragraph{Step 2: overlap pairs with the same centre ($\boldsymbol{\x=\x'}$).}
Let $R_t^{(c)}$ be the overlap contribution coming from pairs with $\x=\x'$.
Applying Mecke with one distinguished centre vertex gives
\[
\E R_t^{(c)}
\le C \,t\int\limits_{\bar t}^1 \E_{\0}\Big[ \Big(\sum_{\substack{\u,\v,\w}}
\1\{\0\to\u\}\1\{\0\to\v\}\1\{\w\to\v\}\Big)^2\Big]\, \d t_o,
\]
for a constant $C$ accounting for the finite number of ordered ways in which two bow-ties can overlap while
sharing the same centre\footnote{Here we use that we do \emph{not} exclude any overlaps across the two bow-ties;
the factor $C$ depends only on the (finite) set of role-identification patterns.}.
Expanding the square and using $\1\{\cdot\}^2=\1\{\cdot\}$ and $\1\{\cdot\}\le1$ yields
\[
\E_{\0}\Big[\Big(\sum_{\substack{\u,\v,\w}}
\1\{\0\to\u\}\1\{\0\to\v\}\1\{\w\to\v\}\Big)^2\Big]
\le C\,\big(S(t_o)^2+S(t_o)\big),
\]
and hence
\begin{equation}\label{eq:center-overlap}
\E R_t^{(c)}\le C\,t\int\limits_{\bar t}^1 S(t_o)^2+S(t_o)\, \d t_o
\end{equation}
where \(C\) may have changed.

\paragraph{Step 3: overlap pairs with different centres ($\boldsymbol{\x\neq\x'}$).}
Let $R_t^{(d)}$ be the overlap contribution from pairs with $\x\neq\x'$.
In any such overlap pair there exists at least one shared vertex among
$\{\u,\v,\w\}\cap\{\u',\v',\w'\}$ or $\{\u,\v,\w\}\cap\{\x'\}$ or $\{\u',\v',\w'\}\cap\{\x\}$.
Fix one such shared role pattern. Write down its Mecke integral over the \emph{distinct} space--time points
appearing in the union. In the integrand we have a product of (at most) six arc indicators.
Whenever two indicators involve arcs \emph{into the same shared vertex} (e.g.\ $\x\to \z$ and $\x'\to \z$ for the shared
$\z$), we use the pointwise bound $\P_{\x,\z}(\x\to\z) \P_{\x',\z}(\x'\to\z)\le \P_{\x,\z}(\x\to\z)$ to drop one factor.
Doing this once per pattern ensures that, after integrating spatial variables, the overlap integral is bounded by
a constant times a product of two \emph{open-bow-tie first-moment integrals} (one rooted at $\x$, one rooted at $\x'$),
with no squared spatially-integrated kernels appearing. Concretely, we obtain for each fixed overlap pattern 
\[
\text{(pattern contribution)} \;\le\; C\, t\int\limits_{\bar t}^1 \int\limits_{\bar t}^1 \!\! S(t_x)\,S(t_{x'})\,\d t_x\,\d t_{x'}
\;=\; C\,t\,G_t^2,
\]
where the prefactor $t$ is the single free spatial translation (the union is connected through the shared vertex)
and the remaining spatial integrals are bounded using the same estimate as in \eqref{eq:spaceint-open}.
Summing over the finitely many ordered overlap patterns yields
\begin{equation}\label{eq:diff-center-overlap}
\E R_t^{(d)} \le C\,t\,G_t^2.
\end{equation}

We infer combining \eqref{eq:center-overlap} and \eqref{eq:diff-center-overlap}
\begin{equation}\label{eq:ERt-bound}
\E R_t \le C\,t\,G_t^2 + C\,t\int\limits_{\bar t}^1 \big(S(t_o)^2+S(t_o)\big)\,\d t_o.
\end{equation}

\paragraph{Step 4: $\boldsymbol{\Var(Y_t)=o((\E Y_t)^2)}$.}
By \eqref{eq:var-basic}, \eqref{eq:EYt}, and \eqref{eq:ERt-bound},
\[
\frac{\Var(Y_t)}{(\E Y_t)^2}
\le
\frac{\E Y_t}{(\E Y_t)^2}
+
\frac{C\,t\,G_t^2}{t^2G_t^2}
+
\frac{C\,t\int S(t_o)^2\d t_o}{t^2G_t^2}
+
\frac{C\,t\int S(t_o)\d t_o}{t^2G_t^2}.
\]
The first and last terms are $O(1/\E Y_t)$ and $O(1/(tG_t))$, hence $o(1)$ since $G_t\to\infty$.
The second term equals $C/t=o(1)$.
It therefore remains to bound $\int S^2$.
Let $S_{\max}:=\sup_{t_o\in[\bar t,1]} S(t_o)$. Then
\[
	\int\limits_{\bar t}^1 S(t_o)^2\,\d t_o \le S_{\max}\int\limits_{\bar t}^1 S(t_o)\,\d t_0 = S_{\max}\,G_t,
\]
implying
\[
	\frac{t\int S(t_o)^2\d t_o}{t^2 G_t^2}\le \frac{S_{\max}}{tG_t}=\frac{S_{\max}}{\E Y_t}.
\]
Finally, for $\gamma\ge\frac12$, one has $S_{\max}=o(\E Y_t)$ since~\eqref{eq:orderB(to)} in the proof of Lemma~\ref{lem:orderSopen} yields $S(t_o)=O(t^{2\gamma-1}t_o^{\gamma-1}(\log t)^2)$, hence $S_{\max}=O(t^{\gamma}(\log t)^{1+\gamma})$, whereas $\E Y_t=tG_t\asymp t(\log t)^{1+\1\{\Gamma=0\}}$ if $\gamma=1/2$, and or is of order
$\Theta(t^{2\gamma}(\log t)^{2\gamma+\1\{\Gamma=0\}}$ if $\gamma>1/2$.
Thus $S_{\max}/\E Y_t\to0$ and particularly $\Var(Y_t)/(\E Y_t)^2\to0$. Hence, by Chebyshev, 
$Y_t/(\E Y_t)\to 1$ in probability, %and in particular $\P(Y_t<\varepsilon \E Y_t)\to0$ for every $\varepsilon\in(0,1)$. 
concluding the proof.
\end{proof}
\begin{lemma}\label{lem:orderMN}
	Let \(\gamma\geq 1/2\) and consider the quantities \(\bfM_t\) and \(\bfN_t\), defined in~\eqref{eq:defMN}. We have
	\begin{equation*}
	\E[\bfM_t]\asymp 
	\begin{cases}
		t \log(t), & \text{ if }\gamma = 1/2,
		\\
		t^{2\gamma}(\log t)^{2\gamma-1}, & \text{ if } \gamma>1/2.
	\end{cases}
\end{equation*}
Moreover, \(\E\bfN_t =o(\E\bfM_t)\). 	
\end{lemma}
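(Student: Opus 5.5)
Write both quantities via Mecke's formula in the rescaled graph \(\D^t\), with the birth-time truncation at \(\bar t=1/(t\log t)\), and reduce them to the open and closed bow-tie integrals already controlled in Lemmas~\ref{lem:orderSopen} and~\ref{lem:orderSclosed}.

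\textbf{The count \(\bfM_t\).} Recall that \(f_\scrI(\x,\D^t)\) counts the vertices \(\y\neq\x\) sharing an out-neighbour with \(\x\), subject to \(\sharp\scrN^{\text{out}}(\x)\geq 2\). This gives two bounds.
\begin{itemize}
\item \emph{Upper bound.} We have
\[
f_\scrI(\x,\D^t)\le \sum_{\v,\y}\1\{\x\to\v\}\1\{\y\to\v\},
\]
so \(\E\bfM_t\le t\int_{\bar t}^1 B(t_o)\,\d t_o\), with \(B\) as in~\eqref{eq:defB(to)}. Integrating the order~\eqref{eq:orderB(to)} gives \(\Theta(t\log t)\) when \(\gamma=1/2\). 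When \(\gamma>1/2\), the dominant term \(t_o^{\gamma-1}(t\log t)^{2\gamma-1}\) integrates to \(t(t\log t)^{2\gamma-1}\asymp t^{2\gamma}(\log t)^{2\gamma-1}\).
\item \emph{Lower bound.} Restrict to configurations in which \(\x\) has a second out-neighbour \(\u\), and keep only one shared out-neighbour \(\v\). Then
\[
f_\scrI(\x,\D^t)\ge \1\{\exists\u\ne\v:\x\to\u\}\sum_{\v,\y}\1\{\x\to\v\}\1\{\y\to\v\}\big/\,\text{(multiplicity)}.
\]
Alternatively, for the lower order it suffices to restrict to \(t_o\) bounded away from \(0\), say \(t_o\in(1/2,1)\). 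There the forward out-neighbours of \(\x\) number Poisson\((\Theta(1))\), so \(\P(\sharp\scrN^{\text{out}}\ge2)\) is bounded below. In the reciprocal and forward integrals the main contribution comes from old \(\v\) (\(t_v\) near \(\bar t\)) with many young \(\y\). The independence of the arc \(\x\to\u\) from \((\x\to\v,\y\to\v)\), via Mecke with an extra point, then yields \(\E\bfM_t\gtrsim t\int_{1/2}^1 B(t_o)\,\d t_o\), which has the same order.
\end{itemize}
The term counting pairs \(\y\) that share more than one out-neighbour with \(\x\) is overcounted in the upper bound but is irrelevant there. This gives the asserted asymptotics of \(\E[\bfM_t]\).

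\textbf{The count \(\bfN_t\).} Since \(c^{\text{ic}}((\x,\y),\D^t)\le 1\), and \(c^{\text{ic}}\) vanishes unless \(\x,\y\) share at least two out-neighbours, we have
\[
f_c(\x,\D^t)\le \sum_\y \1\{\sharp(\scrN^{\text{out}}(\x)\cap\scrN^{\text{out}}(\y))\ge2\}\le \tfrac12\sum_{\u,\v,\y}\1\{\x\to\u,\x\to\v,\y\to\u,\y\to\v\}.
\]
Hence \(\E\bfN_t\le t\int_{\bar t}^1 S^c(t_o)\,\d t_o\), and Lemma~\ref{lem:orderSclosed} bounds this by:
\begin{itemize}
\item \(O(t)\) for \(\gamma<2/3\);
\item \(O(t\log t)\) for \(\gamma=2/3\);
\item \(O(t(t\log t)^{3\gamma-2})\), respectively \(O(t(t\log t)^{3-2/\gamma})\) when \(\Gamma=0\), for \(\gamma>2/3\).
\end{itemize}
We compare with \(\E\bfM_t\) case by case.
\begin{itemize}
\item \(\gamma=1/2\): \(t=o(t\log t)\).
\item \(1/2<\gamma\le 2/3\): \(t\log t=o(t^{2\gamma}(\log t)^{2\gamma-1})\), since \(2\gamma>1\).
\item \(\gamma>2/3\): we need \(3\gamma-2<2\gamma-1\), i.e.\ \(\gamma<1\), and \(3-2/\gamma<2\gamma-1\), which is the observation at the end of Lemma~\ref{lem:orderSclosed}.
\end{itemize}
The powers of \(\log t\) only help or match, because the exponent gap is strictly positive. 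This yields \(\E\bfN_t=o(\E\bfM_t)\).

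\textbf{Main obstacle.} The only delicate point is the lower bound for \(\E\bfM_t\), because of the constraint \(\sharp\scrN^{\text{out}}(\x)\ge2\). I would handle it as sketched: restrict the root to birth times in \((1/2,1)\) and require an additional forward out-neighbour \(\u\) in a fixed bounded space-time window disjoint from the region dominating the \(\v,\y\) integral. By the Poisson structure this event is independent of the rest, with probability bounded below, and the remaining integral \(\int_{1/2}^1 B(t_o)\,\d t_o\) is of the same order as \(\int_{\bar t}^1 B(t_o)\,\d t_o\) by~\eqref{eq:orderB(to)}, since \(t_o^{\gamma-1}\) is integrable at \(0\).
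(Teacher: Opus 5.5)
Your overall route is the paper's. Via Mecke, $\E\bfM_t$ is compared with $t\int_{\bar t}^1 B(t_o)\d t_o$, $\E\bfN_t$ is bounded by $t\int_{\bar t}^1 S^c(t_o)\d t_o$, and the exponent comparison with Lemma~\ref{lem:orderSclosed} is the one you carry out. Two details need repair.

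\emph{First}, $c^{\textup{ic}}\le 1$ is false under the paper's normalisation. For $(\x,\y)\in\scrI_t$ one has $c^{\textup{ic}}((\x,\y),\D^t)=2(\C_t(\x,\y)-1)/(2\mathbf{n}_t(\x)-\C_t(\x,\y)-1)$, which equals $2$ when $\mathbf{n}_t(\x)=\C_t(\x,\y)=2$. The correct bound is $c^{\textup{ic}}\le 2\,\1\{\C_t(\x,\y)\ge 2\}$. Combined with $\1\{\C\ge 2\}\le \C(\C-1)/2$, it still gives your final inequality $\E\bfN_t\le t\int_{\bar t}^1 S^c(t_o)\d t_o$, so nothing downstream changes.

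\emph{Second}, your lower bound writes ``/(multiplicity)'' but never removes it. $f_\scrI$ counts vertices $\y$ with $\C_t(\x,\y)\ge 1$, whereas $B(t_o)$ is the mean of $\sum_\y \C_t(\x,\y)$. The missing line is as follows. Given $\x$ and $\y$, $\C_t(\x,\y)$ is Poisson with mean $\eta_t\le\mu(t_o)$, and $\mu(t_o)=O(1)$ for $t_o\in(1/2,1)$, so $1-e^{-\eta_t}\gtrsim\eta_t$. This is exactly what your restriction of the root's birth time buys, and you should state it.

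For the lower bound itself you take a slightly different route from the paper. The paper uses the thinning identity $\P(\mathbf{n}_t\ge 2,\,\C_t\ge 1)=1-e^{-\eta_t}-\eta_t e^{-\mu(t_o)}$ and asserts it is $\asymp\eta_t$ for all $t_o\in[\bar t,1]$. You instead integrate only over $t_o\in(1/2,1)$ and plant an independent out-neighbour in a window. The window is unnecessary. On $t_o\in(1/2,1)$ we have $\mu(t_o)\asymp 1$, and concavity in $\eta$ on $[0,\mu]$ gives $1-e^{-\eta}-\eta e^{-\mu}\ge \eta\,(1-e^{-\mu}-\mu e^{-\mu})/\mu\asymp\eta$ directly.

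Restricting $t_o$, however, is a genuine gain in robustness:
\begin{itemize}
\item The paper's uniform comparison needs $\mu(t_o)$ bounded above, not only away from $0$. For $\eta_t$ comparable to a large $\mu$, the left-hand side is at most $1\ll\eta_t$, and $\mu(t_o)\to\infty$ as $t_o\to 0$ when $\Gamma<\gamma$.
\item Keeping $t_o$ away from $0$ also keeps $t\,a(t_o,t_v)\to\infty$. The lower bound in~\eqref{eq:kappa-open} is therefore legitimate on the torus; it fails when both birth times are of order $\bar t$, since $\kappa_t\le t$.
\end{itemize}
Since $\int_{1/2}^1 B(t_o)\d t_o$ is of the same order as $\int_{\bar t}^1 B(t_o)\d t_o$ by~\eqref{eq:orderB(to)}, no order is lost.
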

\begin{proof}
	Recall that all sums over pairs always relate to \emph{ordered} pairs and further recall the birth-time truncation at \(\bar t = 1/(t\log t)\). For $(\x,\y)\in \D^t\times \D^t$, we define
\[
	\C_t(\x,\y):=\sharp\big(\scrN^\text{out}_t(\x)\cap \scrN^\text{out}_t(\y)\big),\quad \text{ and }\quad  \mathbf{n}_t(\x):=\sharp\scrN^\text{out}_t(\x).
\]
A direct count of ordered pairs shows that, for a given vertex \(\x\in\D^t\) with $\{\mathbf{n}_t(\x)\geq 2\}$ and \(\y\in\scrN_t^\text{out}(\x)\),
\[
	\sum_{\substack{\u,\v\in \D_t}} \mathbbm{1}\{\y\to\u,\y\to\v\}\, \mathbbm{1}\{\x\to\u,\x\to\v\} = \C_t(\x,\y)\big(\C_t(\x,\y)-1\big),
\]
and
\[
	\sum_{\substack{\u,\v\in \D_t}}\mathbbm{1}\{\y\to\u\ \text{or}\ \y\to\v\}\,\mathbbm{1}\{\x\to\u,\x\to\v\} = \big(2\mathbf{n}_t(\x)-\C_t(\x,\y)-1\big)\,\C_t(\x,\y).
\]
Consequently, whenever $(\x,\y)\in\scrI_t$
\begin{equation}\label{eq:locIC-algebra}
	\mathrm{c}^{\mathrm{ic}}((\x,\y),\D^t)
	%=
	%	\frac{2\C_t(\x,\y)\big(\C_t(\x,\y)-1\big)}{\big(2\mathbf{n}_t(\x)-\C_t(\x,\y)-1\big)\,\C_t(\x,\y)}
	=
		\frac{2\big(\C_t(\x,\y)-1\big)}{2\mathbf{n}_t(\x)-\C_t(\x,\y)-1}
	\le 
		2\,\mathbbm{1}\{\C_t(\x,\y)\ge2\}.
\end{equation}
Define the (ordered) closed bow--tie count
\[
	\cZ_t:=\sum_{\x,\y\in \D^t} \C_t(\x,\y)\big(\C_t(\x,\y)-1\big),
\]
so that $\bfN_t\le 2 \cZ_t$ by \eqref{eq:locIC-algebra}. Now, recall the birth-time truncation and obtain by Mecke's formula,
\begin{equation*}%\label{eq:Mt-Mecke}
	\E[\bfM_t]=t\int_{\bar t}^1 J(t_o)\,\d t_o,
	\quad \text{ where }
	J(t_o):=\E_{(o,t_o)} \sum_{\y\in \D^t} \mathbbm{1}\{\y\in \scrI_t\}.
\end{equation*}
Given $\y$, the common out-neighbours of $(o,t_o)$ and $\y$
form a Poisson process with mean
\[
	\eta_t\big((o,t_o),\y\big)	= \int \P_{\y,\v}(\y\to\v)\,\lambda_{(o,t_o)}^{\mathrm{out}}(d\v),
\]
and the remaining out--neighbours of $(o,t_o)$ are independent by the Poisson thinning property. Here, we recall the definition of the intensity measure \(\lambda_{(o,t_o)}^\text{out}\) from Section~\ref{sec:degrees}. In particular, for $\mu(t_o):=\int\lambda_{(o,t_o)}^{\mathrm{out}}(d\v)=\E_{(o,t_o)}[\mathbf n_t((o,t_o))]$ and $\eta_t:=\eta_t((o,t_o),\y)\le \mu(t_o)$, we have
\[
	\P_{(o,t_o),\y}(\y\in\scrI_t) = \P_{\o,\y}\big(\mathbf n_t((o,t_o))\geq 2,\ \C_t((o,t_o),\y)\geq 1) = 1-e^{-\eta_t}-\eta_t e^{-\mu(t_o)}.
\]
Using that $\mu(t_o)$ is bounded away from $0$ on $t_o\in[\bar t,1]$  and that $0\le \eta_t\le\mu(t_o)$, one obtains %constants $0<c_1\le c_2<\infty$ such that
\[
	1-e^{-\eta_t}-\eta_t e^{-\mu(t_o)} \asymp \eta_t
\qquad\text{for all }t_o\in[\bar t,1]\text{ and all }\y\in \D^t.
\]
Recalling the quantity $B(t_o)$ from~\eqref{eq:defB(to)} and its asymptotic behaviour~\eqref{eq:orderB(to)} and integrating with respect to $\y$ therefore yields $J(t_o)\asymp B(t_o)$ and hence
\begin{equation*}%\label{eq:EMt-scale}
	\E[\bfM_t]\asymp t\int\limits_{\bar t}^1 B(t_o)\,\d t_o \asymp 
	\begin{cases}
		t \log t, & \text{ if }\gamma = 1/2,
		\\
		t^{2\gamma}(\log t)^{2\gamma-1}, & \text{ if } \gamma>1/2,
	\end{cases}
\end{equation*}
as claimed. For the second claim, we use that, given $(\x,\y)$, the variable $\C_t(\x,\y)$ is Poisson with mean $\eta_t(\x,\y)$ as above, so $\E_{\x,\y}[\C_t(\x,\y)(\C_t(\x,\y)-1)]=\eta_t(\x,\y)^2$.
Therefore, 
\begin{equation*}%\label{eq:EZt-Mecke}
	\E[\cZ_t] = t\int\limits_{\bar t}^1 S^c(t_o)\,\d t_o,
\end{equation*}
where $S^c(t_o)$ is given in~\eqref{eq:defSclosed}. The estimates on $S^c$ of Lemma~\ref{lem:orderSclosed} imply that \(\E[\cZ_t]=o\big(\E[M_t]\big)\).
As \(\bfN_t\leq 2\cZ_t\), this concludes the proof.
\end{proof}
\end{document}